\documentclass[11pt]{scrartcl}
\usepackage[utf8]{inputenc}
\usepackage[T1]{fontenc}
\usepackage{lmodern}
\usepackage{alphabeta}

\usepackage{titling}
\usepackage{xspace}
\usepackage{soul} % for \st to strike out text

\usepackage[letterpaper, top=25.4mm, bottom=25.4mm, left=25.4mm, right=25.4mm, includefoot]{geometry}

\DeclareSectionCommand[%
level=4,
indent=0pt,
beforeskip=1ex plus 1ex minus .2ex,
afterskip=-1em,
font={},
tocindent=7em,
tocnumwidth=4.1em,
counterwithin=subsubsection
]{paragraph}

\usepackage[inline]{enumitem}

\usepackage{dsfont}
\usepackage{setspace}

\usepackage{bm}
\usepackage{microtype}
\usepackage{amsmath}
\usepackage{amssymb}
\usepackage{amsfonts}
\usepackage{mathtools}
\usepackage[mathscr]{euscript}

\usepackage[hyphens]{url}
\usepackage{amsthm}

\usepackage{tikz}
\usepackage{xcolor}
\usepackage{subcaption}
\usepackage{graphicx}
\usepackage{wrapfig}

\usepackage{hyperref}
\usepackage{zref-clever}

\usepackage{nicefrac}
\usepackage[textwidth=1.5in]{todonotes}

\colorlet{myGreen}{green!50!black}
\colorlet{myLightgreen}{green}
\colorlet{myRed}{red!90!black}
\definecolor{myBlue}{rgb}{0.25, 0.0, 1.0}
\definecolor{myLightBlue}{rgb}{0.39, 0.58, 0.93}
\colorlet{myViolet}{myBlue!55!myRed}
\definecolor{myOrange}{rgb}{1.0, 0.66, 0.07}

\definecolor{CornflowerBlue}{rgb}{0.39, 0.58, 0.93}
\definecolor{DarkGoldenrod}{rgb}{0.72, 0.53, 0.04}
\definecolor{BritishRacingGreen}{rgb}{0.0, 0.26, 0.15}
\definecolor{DarkMagenta}{rgb}{0.55, 0.0, 0.55}
\definecolor{AO}{rgb}{0.0, 0.5, 0.0}
\definecolor{BostonUniversityRed}{rgb}{0.8, 0.0, 0.0}
\definecolor{myRed}{rgb}{0.8, 0.0, 0.0}
\definecolor{DarkMidnightBlue}{rgb}{0.0, 0.2, 0.4}
\definecolor{DarkTangerine}{rgb}{1.0, 0.66, 0.07}
\definecolor{AppleGreen}{rgb}{0.55, 0.71, 0.0}
\definecolor{BrightUbe}{rgb}{0.82, 0.62, 0.91}
\definecolor{Amethyst}{rgb}{0.6, 0.4, 0.8}
\definecolor{DarkGray}{rgb}{0.52, 0.52, 0.51}
\definecolor{Gray}{rgb}{0.66, 0.66, 0.66}
\definecolor{BananaYellow}{rgb}{1.0, 0.88, 0.21}
\definecolor{Amber}{rgb}{1.0, 0.75, 0.0}
\definecolor{LightGray}{rgb}{0.83, 0.83, 0.83}
\definecolor{PrincetonOrange}{rgb}{1.0, 0.56, 0.0}
\definecolor{DeepCarrotOrange}{rgb}{0.91, 0.41, 0.17}
\definecolor{CarrotOrange}{rgb}{0.93, 0.57, 0.13}
\definecolor{MidnightBlue}{rgb}{0.1, 0.1, 0.44}
\definecolor{Magenta}{rgb}{0.50, 0.0, 0.50}
\definecolor{BrightPink}{rgb}{1.0, 0.0, 0.5}
\definecolor{BrilliantRose}{rgb}{1.0, 0.33, 0.64}
\definecolor{ChromeYellow}{rgb}{1.0, 0.65, 0.0}
\definecolor{HotMagenta}{rgb}{1.0, 0.11, 0.81}
\definecolor{Auburn}{rgb}{0.43, 0.21, 0.1}
\definecolor{BrightTurquoise}{rgb}{0.03, 0.91, 0.87}
\definecolor{DarkCyan}{rgb}{0.0, 0.55, 0.55}
\definecolor{Flame}{rgb}{0.89, 0.35, 0.13}

\vfuzz \hfuzz
\setlist[itemize]{topsep=0pt,partopsep=0pt,itemsep=0pt,parsep=0pt}
\setlist[itemize,1]{label={\small\textbullet}}
\setlist[itemize,2]{label={\tiny\textbullet}}
\setlist[itemize,3]{label=$\cdot$}
\setlist[enumerate]{topsep=0pt,partopsep=0pt,itemsep=0pt,parsep=0pt}
\setlist[enumerate,1]{label=\roman*)}
\setlist[enumerate,2]{label=\alph*)}
\setlist[enumerate,3]{label=\arabic*)}

\hypersetup{
colorlinks=true,
linkcolor=AO!65!black,
citecolor=AO!65!black,
urlcolor=AppleGreen!65!black,
bookmarksopen=true,
bookmarksnumbered,
bookmarksopenlevel=2,
bookmarksdepth=3
}

\theoremstyle{definition}

\newtheorem{environment}{Environment}[section]

\newtheorem{lemma}[environment]{Lemma}
\AddToHook{env/lemma/begin}{\zcsetup{countertype={environment=lemma}}}
\zcRefTypeSetup{lemma}{
Name-sg = Lemma ,
name-sg = Lemma ,
Name-pl = Lemmas ,
name-pl = Lemmas ,
}

\newtheorem*{lemma*}{Lemma}
\AddToHook{env/lemma*/begin}{\zcsetup{countertype={environment=lemma*}}}
\zcRefTypeSetup{lemma*}{
Name-sg = Lemma ,
name-sg = Lemma ,
Name-pl = Lemmas ,
name-pl = Lemmas ,
}

\newtheorem{proposition}[environment]{Proposition}
\AddToHook{env/proposition/begin}{\zcsetup{countertype={environment=proposition}}}
\zcRefTypeSetup{proposition}{
Name-sg = Proposition ,
name-sg = Proposition ,
Name-pl = Propositions ,
name-pl = Propositions ,
}

\newtheorem{corollary}[environment]{Corollary}
\AddToHook{env/corollary/begin}{\zcsetup{countertype={environment=corollary}}}
\zcRefTypeSetup{corollary}{
Name-sg = Corollary ,
name-sg = Corollary ,
Name-pl = Corollaries ,
name-pl = Corollaries ,
}

\newtheorem{theorem}[environment]{Theorem}
\AddToHook{env/theorem/begin}{\zcsetup{countertype={environment=theorem}}}
\zcRefTypeSetup{theorem}{
Name-sg = Theorem ,
name-sg = Theorem ,
Name-pl = Theorems ,
name-pl = Theorems ,
}

\newtheorem*{theorem*}{Theorem}
\AddToHook{env/theorem*/begin}{\zcsetup{countertype={environment=theorem*}}}
\zcRefTypeSetup{theorem*}{
Name-sg = Theorem ,
name-sg = Theorem ,
Name-pl = Theorems ,
name-pl = Theorems ,
}

\AddToHook{env/conjecture/begin}{\zcsetup{countertype={environment=conjecture}}}
\zcRefTypeSetup{conjecture}{
Name-sg = Conjecture ,
name-sg = Conjecture ,
Name-pl = Conjectures ,
name-pl = Conjectures ,
}

\newtheorem*{hypothesis*}{Hypothesis}
\AddToHook{env/hypothesis*/begin}{\zcsetup{countertype={environment=hypothesis*}}}
\zcRefTypeSetup{hypothesis*}{
Name-sg = Hypothesis ,
name-sg = Hypothesis ,
Name-pl = Hypotheses ,
name-pl = Hypotheses ,
}

\newtheorem{observation}[environment]{Observation}
\AddToHook{env/observation/begin}{\zcsetup{countertype={environment=observation}}}
\zcRefTypeSetup{observation}{
Name-sg = Observation ,
name-sg = Observation ,
Name-pl = Observations ,
name-pl = Observations ,
}

\AddToHook{env/example/begin}{\zcsetup{countertype={environment=example}}}
\zcRefTypeSetup{example}{
Name-sg = Example ,
name-sg = Example ,
Name-pl = Examples ,
name-pl = Examples ,
}

\AddToHook{env/remark/begin}{\zcsetup{countertype={environment=remark}}}
\zcRefTypeSetup{remark}{
Name-sg = Remark ,
name-sg = Remark ,
Name-pl = Remarks ,
name-pl = Remarks ,
}

\zcRefTypeSetup{equation}{
Name-sg = Equation ,
name-sg = Equation ,
Name-pl = Equations ,
name-pl = Equations ,
}

\zcRefTypeSetup{chapter}{
Name-sg = Chapter ,
name-sg = Chapter ,
Name-pl = Chapters ,
name-pl = Chapters ,
}

\zcRefTypeSetup{section}{
Name-sg = Section ,
name-sg = Section ,
Name-pl = Sections ,
name-pl = Sections ,
}

\zcRefTypeSetup{algorithm}{
Name-sg = Algorithm ,
name-sg = Algorithm ,
Name-pl = Algorithms ,
name-pl = Algorithms ,
}

\AddToHook{env/notation/begin}{\zcsetup{countertype={environment=notation}}}
\zcRefTypeSetup{notation}{
Name-sg = Notation ,
name-sg = Notation ,
Name-pl = Notations ,
name-pl = Notations ,
}

\AddToHook{env/question/begin}{\zcsetup{countertype={environment=question}}}
\zcRefTypeSetup{question}{
Name-sg = Question ,
name-sg = Question ,
Name-pl = Questions ,
name-pl = Questions ,
}

\AddToHook{env/problem/begin}{\zcsetup{countertype={environment=problem}}}
\zcRefTypeSetup{problem}{
Name-sg = Problem ,
name-sg = Problem ,
Name-pl = Problems ,
name-pl = Problems ,
}

\AddToHook{env/claim/begin}{\zcsetup{countertype={environment=claim}}}
\zcRefTypeSetup{claim}{
Name-sg = Claim ,
name-sg = Claim ,
Name-pl = Claims ,
name-pl = Claims ,
}

\AddToHook{env/definition/begin}{\zcsetup{countertype={environment=definition}}}
\zcRefTypeSetup{definition}{
Name-sg = Definition ,
name-sg = Definition ,
Name-pl = Definitions ,
name-pl = Definitions ,
}

\zcRefTypeSetup{figure}{
Name-sg = Figure ,
name-sg = Figure ,
Name-pl = Figures ,
name-pl = Figures ,
}

\usetikzlibrary{calc}
\usetikzlibrary{fit}
\usetikzlibrary{decorations}
\usetikzlibrary{decorations.pathmorphing}
\usetikzlibrary{decorations.text}
\usetikzlibrary{shapes,hobby}

\tikzset{
	position/.style args={#1:#2 from #3}{
		at=($(#3)+(#1:#2)$)
	}
}

\tikzset{
  v:main/.style = {draw, circle, scale=0.8, thick,fill=black,inner sep=0.7mm},
  v:ghost/.style = {inner sep=0pt,scale=1},
  >={latex},
  e:marker/.style = {line width=8.5pt,line cap=round,opacity=0.35,color=DarkGoldenrod},
  e:main/.style = {line width=1pt},
}

\newcommand*\samethanks[1][\value{footnote}]{\footnotemark[#1]}

\title{The price of homogeneity is polynomial}
\predate{}
\date{}
\postdate{}

\preauthor{}
\DeclareRobustCommand{\authorthing}{
	\begin{center}
		Maximilian Gorsky\thanks{Supported by the Institute for Basic Science (IBS-R029-C1).}~~\!\footnote{\href{mailto:m.gorsky@pm.me}{m.gorsky@pm.me}} \\
		{\small Discrete Mathematics Group, Institute for Basic Science (IBS), South Korea} \\
        \medskip
		Micha\l{} T.\ Seweryn\footnote{\href{mailto:michalsew@gmail.com}{michalsew@gmail.com}} \\
		  \medskip
		Sebastian Wiederrecht\samethanks[1]~~\!\footnote{\href{mailto:sebastian.wiederrecht@gmail.com}{wiederrecht@kaist.ac.kr}} \\
		{\small School of Computing, KAIST, South Korea}
\end{center}}
\author{\authorthing}
\postauthor{}

\begin{document}
\maketitle

\begin{abstract}
We provide explicit and polynomial bounds for the \textsl{Homogeneous Wall Lemma} which occurred for the first time implicitly in the $13$th entry of Robertson and Seymour's Graph Minors Series [JCTB 1990] and has since become a cornerstone in the algorithmic theory of graph minors.

A wall where each brick is assigned a set of colours is said to be \emph{homogeneous} if each brick is assigned the same set of colours.
The Homogeneous Wall Lemma says that there exists a function $h$ that, given non-negative integers $q$ and $k$ and an $h(q,k)$-wall $W$ where each brick is assigned a, possibly empty, subset of $\{ 1,\dots,q\}$ contains a $k$-wall $W'$ as a subgraph such that, if one assigns to each brick $B$ of $W'$ the union of the sets assigned to the bricks of $W$ in its interior, then $W'$ is homogeneous.
It is well-known that $h(q,k) \in k^{\mathcal{O}(q)}$.
The Homogeneous Wall Lemma plays a key role in most applications of the \textsl{Irrelevant Vertex Technique} where an exponential dependency of $h$ on $q$ usually causes non-uniform dependencies on meta-parameters at best and additional exponential blow-ups at worst.
By proving that $h(q,k) \in \mathcal{O}(q^4\cdot k^6)$, we provide a positive answer to a problem raised by Sau, Stamoulis, and Thilikos [ICALP 2020].
\end{abstract}
\let\sc\itshape
\thispagestyle{empty}

\newpage

\setcounter{page}{1}

\section{Introduction}\label{sec:Intro}
A central theme in structural graph theory is that graphs in which some specific parameter exceeds a certain threshold must contain large, well-organised substructures.
A prime example of such a behaviour is the celebrated \textsl{Grid Theorem} due to Robertson and Seymour \cite{RobertsonS1986Grapha}.
This theorem says that every graph of large \textsl{treewidth} contains a large \textsl{wall}\footnote{We postpone the definition of a wall to \zcref{subsec:Walls}.} as a subgraph.
The Grid Theorem plays a pivotal role in algorithmic and structural graph theory as a whole \cite{DemaineFHT2005Subexponential,DemaineHT2006Bidimensional,FominT2006Dominating,DemaineH2008Linearity,FominLST2020Bidimensionality}.

Recently a lot of effort has been dedicated to refining Robertson and Seymour's theory of graph minors, including a push to find new proofs for their structural results to give them explicit, polynomial bounds.
In this paper we continue this program by providing a \textsl{Homogeneous Wall Lemma} with polynomial bounds.
This lemma plays an integral role in most applications of the \textsl{Irrelevant Vertex Technique} \cite{RobertsonS1995Graph,RobertsonS2009Graph,RobertsonS2012Graph}.
Previously known exponential bounds have been identified as the sole reason for non-uniform behaviour in the running times of many parameterized algorithms based on this technique \cite{SauST2020An,MorelleSST2023Faster}.

As a side effect of the above effort, many structural key results may now be phrased to require a large wall as input \cite{KawarabayashiTW2018New,KawarabayashiTW2021Quickly,PaulPTW2024Obstructionsa,GorskySW2025Polynomial}.
Regarding algorithmic applications, one of those theorems stands out among the rest: the \textsl{Flat Wall Theorem} \cite{RobertsonS1995Graph,Chuzhoy2015Improved,KawarabayashiTW2018New}.
This theorem is the base of Robertson and Seymour's \textsl{Graph Minor Algorithm} and the related \textsl{Irrelevant Vertex Technique}.
Below is a summary of these two cornerstones.
\begin{description}
 \item[Flat Wall Theorem:] Any graph $G$ with a large wall $W$ contains either a $K_t$-minor or a small set of vertices $A\subseteq V(G)$ and a big wall $W'\subseteq W$ such that the subgraph of $G-A$ attaching to the ``interior'' of $W'$ behaves roughly like a planar graph\footnote{This definition is kept intentionally vague. See \zcref{sec:HomoMuralis} for a full definition.} -- this behaviour is called \emph{flatness}.
 \item[Irrelevant Vertex Technique:] For each instance\footnote{The \textsc{$k$-Disjoint Paths} problem takes as input a pair $(G,\Pi)$ where $G$ is a graph and $\Pi=\langle (s_i,t_i) \rangle_{i\in\{ 1,\dots,k\}}$ is a sequence of $k$ vertex pairs from $G$.
The question is if there exist internally vertex-disjoint paths $P_1,\dots,P_k$ such that $P_i$ has endpoints $s_i$ and $t_i$ for each $i\in\{ 1,\dots,k\}$.} $(G,\Pi)$ of the \textsc{$k$-Disjoint Paths} problem containing a large (relative to $k$) wall $W$, there exists a vertex $v$  -- called the \emph{irrelevant vertex} -- in the ``interior'' of $W$ such that $(G-v,\Pi)$ and $(G,\Pi)$ are equivalent instances.
\end{description}
The Irrelevant Vertex Technique combined with the Grid Theorem now allows for the following high level algorithm for the \textsc{$k$-Disjoint Paths} problem:
Given an instance $(G,\Pi)$ of the \textsc{$k$-Disjoint Paths} problem, either $G$ has small treewidth, in which case one can solve the problem using dynamic programming, or $G$ has large treewidth and therefore contains an irrelevant vertex whose deletion yields a smaller instance.
Hence, after at most $|G|$ iterations the first outcome must occur.
This technique has since found a wide range of applications in the design of parameterized algorithms \cite{FominLPSZ2020Hitting,SauST2020An,SauST2022kApices2,SauST2023kApices,GolovachST2023ModelChecking,MorelleSST2023Faster,CavallaroKK2024EdgeDisjointa,KorhonenPS2024Minora,SchirrmacherSSTV2024ModelChecking,SauST2025Parameterizing}, most of which deal with entirely different problems than finding paths between specified endpoints.

A key contributor for the success of the Irrelevant Vertex Technique is the comparative ease of access as a black box.
Proving the correctness of the original result required the full power of the Graph Minor Structure Theorem (see \cite{RobertsonS2003Grapha,GorskySW2025Polynomial} and \cite{RobertsonS2009Graph,RobertsonS2012Graph,CavallaroKK2024EdgeDisjointa,LiuY2025Disjoint}).
However, even with the recent success in making large parts of Robertson and Seymour's Graph Minors Series efficient, the Irrelevant Vertex Technique remains the main contributor to the ``galactic'' behaviour of the constants and functions involved in the Graph Minor Algorithm and its relatives.

\paragraph{The price of homogeneity.}
Once the correctness of the Irrelevant Vertex Technique is established and one argues away the presence of a large clique minor using established techniques (see \cite{RobertsonS1995Graph,ProtopapasTW2025Colorful}), one only requires a single refinement step after applying the Flat Wall Theorem to actually find an irrelevant vertex.
This refinement step is referred to as \emph{homogenisation}.
Recall that the final flat wall $W'$ behaves like a planar graph \textsl{after} deleting a small vertex set $A$.
The vertices in $A$ however may arbitrarily attach to the interior of the wall.
In order to argue for some vertex to be irrelevant, one needs to first tame these attachments.
In this simplified setting we say that a flat wall $W$ is \emph{homogeneous} for the set $A$ if for all non-boundary facial cycles $C$ of $W$, also called \textsl{bricks}, and every $a \in A$ it holds that $a$ has a neighbour in the interior of $C$ if and only if each member of $A$ has a neighbour in the interior of the face bounded by $C$.
If one aims for a homogeneous flat wall of order $k$, current techniques\footnote{For an example of a sophisticated proof carrying out such a technique, see the proof of Lemma 13 in \cite{SauST2023kApices}.} require to start with a flat wall of order $k^{\mathcal{O}(|A|)}$.
\smallskip

This exponential blow-up on the size of the required flat wall was dubbed the ``\emph{price of homogeneity}'' by Sau, Stamoulis, and Thilikos \cite{SauST2020An,SauST2022kApices2,SauST2023kApices}.
The exponential dependency on $|A|$, and thus the exponential nature of the price of homogeneity was believed to be unavoidable when relying on the Irrelevant Vertex Technique \cite{SauST2022kApices2,SauST2023kApices,MorelleSST2023Faster}.
Indeed, it appears that this is truly the limit of standard technique for homogenisation and in \cite{SauST2022kApices2}, it is asked whether one can prove that this price is unavoidable if one wants to apply the Irrelevant Vertex Technique.

\paragraph{Our result.}
For many algorithmic applications, the exponential bounds established as the state of the art force an additional exponentiation or a non-uniform dependency on a meta-parameter.
We discuss the latter phenomenon in more detail in \zcref{subsec:consequences} based on an example from a recent result of Morelle, Sau, Stamoulis, and Thilikos \cite{MorelleSST2023Faster}.
\smallskip

Our main result improves the bound of $k^{\mathcal{O}(|A|)}$ to $\mathcal{O}(|A|^4 \cdot k^6)$, thereby removing any additional exponential dependency or resulting non-uniformity in the running time of all algorithms relying on the Irrelevant Vertex Technique based on homogenisation.
\smallskip

To make our results adaptable to a variety of settings, we state them in a more abstract form.
Thus, we in particular provide a direct answer to the problem raised by Sau et al.\ \cite{SauST2022kApices2}, by making this part of the Irrelevant Vertex Technique more efficient.
This shows that the limitations of this method are still far from completely understood, suggesting potential for further improvements.

\subsection{A polynomial homogeneity lemma.}
We now provide a slightly more technical version of our main result.

Let $W$ be a $k$-wall with $k \geq 3$.
The \emph{perimeter} $C_P$ of $W$ is the facial cycle corresponding to the outer face of the wall and a \emph{brick} of $W$ is any facial cycle that is not the perimeter. 

Let $H$ be a subgraph of a connected graph $G$.
An \emph{$H$-bridge} in $G$ is a connected subgraph $J$ of $G$ such that $E(J) \cap E(H) = \emptyset$ and either $E(J)$ consists of a unique edge with both ends in $H$, or 
$J$ is constructed from a component $C$ of $G - V(H)$ and the non-empty set of edges $F \subseteq E(G)$ with one end in $V(C)$ and the other in $V(H)$, by taking the union of $C$, the endpoints of the edges in $F$, and $F$ itself.
The $H$-bridges induce a partition of $E(G)\setminus E(H)$.
The vertices in $V(J) \cap V(H)$ are called the \emph{attachments} of $J$ and the set $V(J)\setminus V(H)$ is called the \emph{interior} of $J$.

We define the \emph{compass} of $W$ to be the union of $C_P$ and the unique $C_P$-bridge in $G$ that contains all vertices of $W - V(C_P)$.
Let $C \subseteq W$ be any cycle of $W$.
The \emph{compass} of $C$ is the union of $C$ together with all $C$-bridges in $G$ that are entirely contained in the compass of $W$.
The \emph{interior} of $C$ is the the compass of $C$ with $V(C)$ deleted.

Let $q \geq 0$ be an integer.
A \emph{$q$-colorful graph} is a pair $(G,\chi)$ where $G$ is a graph and $\chi\colon V(G)\to 2^{[q]}$ is a map, assigning to each vertex $v\in V(G)$ a, possibly empty, subset of the \emph{colours} $\chi(v) \subseteq [q]$.

A flat wall $W$ in a $q$-colorful graph $(G,\chi)$ is called \emph{homogeneous} if there exists a bipartition $I \dot{\cup} O = [q]$ of the colours such that no vertex in the compass of $W$ carries a colour from $O$ and for every brick $B$ of $W$ and every $i \in I$ the interior of $B$ has a vertex with colour $i$.

The main difference between our main result and most variants of the Homogeneous Wall Lemma is that we do not guarantee the homogeneous wall $W'$ to be a \textsl{subwall} of the initial flat wall $W$.
Instead, in our result $W'$ is merely a subgraph of $W$ with some additional properties:
First, we require that the ``witness''\footnote{For us a witness for flatness is a form of ``almost embedding'' describing the planar-like behaviour of the flat wall. See \zcref{sec:HomoMuralis} for a definition.} for the flatness of $W$ also witnesses the flatness of $W'$.
Second, the \textsl{tangle}\footnote{See \zcref{sec:wallsAndTangles} for a definition.} of $W'$ should be a subtangle -- a so-called \textsl{truncation} -- of the tangle of $W$.
Tangles originate in Robertson and Seymour's Graph Minors Series \cite{RobertsonS1991Graph} and provide an abstract framework to identify ``areas'' in a graph which cannot be split apart by tree-decompositions of small adhesion.

For a graph $G$, we denote by $|G|$ the number of its vertices and by $||G||$ the number of its edges.

\begin{theorem}\label{thm:IntroHomoMuralis}
There exists a function $f\colon \mathbb{N}^2\to\mathbb{N}$ with $f(q,k) \in \mathcal{O}(q^4 \cdot k^6)$ such that for all non-negative integers $q,k$ and every $q$-colorful graph $G$ with a flat $f(q,k)$-wall $W_0$, there exists a flat, homogeneous $k$-wall $W_1\subseteq W_0$ whose tangle is a truncation of the tangle of $W_0$.

Moreover, there exists an algorithm that takes as input a colorful graph $(G,\chi)$ and a flat wall $W_0$ and computes the flat wall $W_1$ as above in time $\mathsf{poly}(q+k)||G||$.
\end{theorem}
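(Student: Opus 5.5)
The plan is to avoid the classical iterative homogenisation, which handles one colour at a time and shrinks the wall by a polynomial factor in each round, and so produces the $k^{\mathcal{O}(q)}$ bound. Instead all colours are handled at once. We are allowed to take $W_1$ to be merely a subgraph of $W_0$ rather than a subwall. This freedom lets us route the rows and columns of $W_1$ through $W_0$ so that each brick of $W_1$ encloses a large region of $W_0$ collecting every colour from $I$.

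First, partition $W_0$ into a grid of $N\times N$ disjoint subwalls (``cells''), each of order $m$. The target sizes are $N = \mathcal{O}(qk^2)$ and $m=\mathcal{O}(qk)$, so that $f(q,k) = Nm$; the exponents will be tuned at the end to reach $q^4k^6$. For each cell $D$, let $\chi(D)\subseteq[q]$ be the set of colours appearing in the interior of its perimeter. These compasses are determined by the flatness witness of $W_0$, and any cycle of $W_0$ inherits that witness, which gives flatness of $W_1$. Call a colour $i$ \emph{rare} if at most $k^2$ cells contain it, and \emph{frequent} otherwise. Set $O$ to be the rare colours and $I$ the frequent ones.

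The rare colours are killed by a counting argument. Altogether they occupy at most $qk^2$ cells. Choosing $N$ large compared to $qk^2$ (for instance $N\geq 2qk^2+2$ rows of cells), there is a square block of cells of side length $\Omega(N/\sqrt{qk^2})$, or at least a full strip of consecutive cell-rows and cell-columns, avoiding all rare cells. We restrict to that block. If this loses too much, we instead remove the at most $qk^2$ bad cell-rows and cell-columns and re-route, using the slack in the cells, so that the remaining cells still form a grid-like arrangement.

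The frequent colours are the crux and the expected main obstacle. Each of them appears in more than $k^2$ cells, but possibly clustered in one region, and every one of the $k^2$ bricks of $W_1$ must see every colour in $I$. I would make every brick of $W_1$ enclose a large ``snake'' of cells. The idea is that the complement of a thin tree-like set of paths in the cell grid can be cut into $k^2$ connected regions, each containing at least one cell of each colour in $I$. Concretely, greedily grow $k^2$ disjoint connected unions of cells, each picking up one cell per frequent colour and connecting them through corridors. Each such region costs $\mathcal{O}(q)$ cells plus corridors of length $\mathcal{O}(N)$, which is where the polynomial factors come from. Then draw the horizontal and vertical paths of $W_1$ along the boundaries between these regions, using the internal rows and columns of the cells ($m\geq$ the number of paths that may pass through a single cell) so that these paths stay disjoint. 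The result must be a topological $k$-wall whose faces are exactly the regions, which forces the regions to be arranged in a $k\times k$ pattern. I would first try to achieve this by building regions column by column, in the manner of a planar ``sweep'': the $j$-th column of bricks is a vertical strip of cells, subdivided horizontally so that each piece accumulates all frequent colours. If colours are clustered, the strips must bend, which needs the extra factor $q$ of rows.

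Finally I would verify three things. First, $W_1\subseteq W_0$ is flat with respect to the same witness. Second, it is homogeneous, because no vertex of its compass carries an $O$-colour and every brick contains a cell with each $I$-colour. Third, its tangle is a truncation of that of $W_0$; this holds because every row and column of $W_1$ meets $\Omega(k)$ disjoint rows and columns of $W_0$, so any small separation has its big side, with respect to $W_1$, contained in the big side with respect to $W_0$. The running time is linear once cell colour sets are computed in $\mathcal{O}(||G||)$ and the combinatorial routing on the $N\times N$ cell grid is done in $\mathsf{poly}(q+k)$.
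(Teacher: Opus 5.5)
Your overall idea matches the paper's: give up on subwalls and route $W_1$ through $W_0$ so that every brick encloses a colour-rich region. Your flatness and tangle arguments are also fine. However, neither of the two steps that actually carry the polynomial bound is established.

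\textbf{First gap: the rare colours.} Your rare/frequent split is made only once. Deleting the at most $qk^2$ rare cells can wipe out a frequent colour: its $k^2+1$ cells may all be rare cells too, since one cell can carry several colours. So you must reclassify and repeat.

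If each round restricts to a square block of side $\Omega(N/\sqrt{qk^2})$, the loss per round is multiplicative. Then $q$ rounds cost a factor of order $(k\sqrt{q})^{q}$, which is exactly the exponential price you set out to avoid.

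Your alternative, ``remove the bad cell-rows and cell-columns and re-route'', ignores that $\mathsf{compass}(W_1)$ is essentially the disk bounded by the trace of its perimeter in $\rho$. Any discarded cell lying between kept cells is inside that disk, so its $O$-colours end up in the compass. This can only be avoided if $W_1$ is deliberately shaped to leave those cells outside, and you do not describe how. Likewise, colours sitting only in the ``mortar'' between cells, or only where your new paths run, are not controlled by your cell counts.

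The paper resolves all of this with packings of full-length column strips and row strips:
\begin{itemize}
\item Killing a colour costs only the at most $r-1$ strips that carry it, so losses are additive; this is the invariant $|\mathcal{S}|\ge |I|(r-1)+x$.
\item The $p$-paddings, together with trimming, cropping and tiles, guarantee that each surviving colour occurs in cores and tiles of many strips, not merely in the routing infrastructure or at the perimeter.
\item The final mesh travels only inside kept strips, so the gaps left by discarded strips lie outside its compass.
\end{itemize}

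\textbf{Second gap: the frequent colours.} The sentence ``the result must be a topological $k$-wall whose faces are exactly the regions'' states the entire difficulty as a conclusion.
\begin{itemize}
\item Greedily grown regions have no reason to realise the adjacency pattern of the bricks of a wall.
\item If a frequent colour is concentrated in a small cluster of cells, all $\Theta(k^2)$ bricks must reach into that cluster while keeping that pattern. ``The strips must bend'' gives no mechanism for this.
\item Your sizes $N=\mathcal{O}(qk^2)$ and $m=\mathcal{O}(qk)$ are not backed by any count.
\end{itemize}

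The missing idea is to pass through a one-dimensional intermediate object. The paper first builds an $(n\times m)$-mesh with a \emph{rainbow middle row}. This is a snake-shaped mesh whose $m$ vertical paths sweep past every tile in turn. Whenever a tile would add a new colour to some middle-row face $C_k$, the paths $P'_{k+1},\dots,P'_m$ detour around it through the buffers, so the tile lands between $P'_k$ and $P'_{k+1}$.

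Every colour occurs in tiles of at least $q(m-2)+1$ distinct strips. A counting argument then shows that every face of the row receives every colour, regardless of clustering. Only after this is the row of $n^2-n$ rainbow faces folded over and under itself into a homogeneous $n$-mesh. That folding is where the two-dimensional brick pattern comes from, and the whole chain is what yields $\mathcal{O}(q^4k^6)$.
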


While shifting from subwalls to subgraphs might seem to be a weakening of the original Homogeneous Wall Lemma, by ensuring that both the original and the new wall share the same witness for flatness and agree on their tangles, our main theorem maintains the core properties also guaranteed by subwalls.
This ensures the applicability of \zcref{thm:IntroHomoMuralis} whenever previous versions of the Homogeneous Wall Lemma have been used.
Moreover, it appears that exponential dependencies on $q$ might be unavoidable if one insists on $W_1$ being a subwall.
If this is indeed the case, \zcref{thm:IntroHomoMuralis} would in a sense be the strongest possible weakening that holds with polynomial bounds.

\paragraph{Two examples of how to adapt \zcref{thm:IntroHomoMuralis}.}
The idea of encoding ``types'' as colourings in order to state a general version of the Homogeneous Wall Lemma is not new.
Indeed, Sau, Stamoulis, and Thilikos state a similar lemma (Lemma 15 in \cite{SauST2024More}) with a slightly more technical definition for colourings.
However, their version uses the established single-exponential bound.
Let us briefly discuss how to use these colours to adapt our main result to different situations.

To see that \zcref{thm:IntroHomoMuralis} indeed allows for easy adaptation to a variety of settings and produces the desired outcome, if the initial wall $W_0$ is flat in $G-A$, where $A$ is a small vertex set, consider the following construction.
Set $q \coloneqq |A|$ with $A = \{ x_1, \ldots , x_q \} \subseteq V(G)$.
Now, for each vertex $v \in V(G - A)$, add the colour $i \in \{ 1, \ldots , q \}$ to the set of colours carried by $v$ if and only if $v \in N_G(x_i)$.
The result is a $q$-colorful graph $(G-A,\chi)$ containing the flat wall $W_0$.
An application of \zcref{thm:IntroHomoMuralis} now yields a flat wall $W_1 \subseteq W_0$ that is homogeneous in the sense of \zcref{thm:IntroHomoMuralis} which means that for any brick $B$ of $W_1$ and any $a\in A$, the vertex $a$ has a neighbour in the interior of $B$ if and only if $a$ has a neighbour in the interior of \textsl{every} brick of $W_1$ as desired.
\smallskip

A second scenario in which the Homogeneous Wall Lemma finds applications -- this is part of Robertson and Seymour's Graph Minor Algorithm -- is as follows:
The planar-like behaviour of a flat wall $W$ in a graph $G$ involves taking $G$ apart at separations of order at most three.
Let $H \subseteq G$ be a subgraph of $G$.
A graph $G_1$ with $H \subseteq G_1$ is an \emph{$H$-reduction} of $G$ if there exists a graph $G_2$ and cliques $J_i \subseteq G_i$, $i\in\{ 1,2 \}$, of size at most three such that $G$ can be obtained from $G_1$ and $G_2$ by identifying $J_1$ and $J_2$ into a single clique $J$ and then possibly deleting some of the edges of $J$.
If $W$ is flat in $G$ and $C$ is its perimeter, then there exists a graph $G'$ that can be obtained from $G$ by a sequence of $C$-reductions such that the union $G^*$ of $C$ and the $C$-bridge $B$ in $G'$ with $B \cap V(W) = (V(G') \cap V(W)) \setminus V(C)$ is planar and still contains a large wall $W'$.

In many applications one desires to homogenise the wall $W'$ with respect to some notion of ``profiles'' defined\footnote{The precise definition of such profiles may vary depending on the application. A common one is the set of all rooted graphs of bounded size that may be found as minors rooted on the vertex set of the clique $J$ as from above.} on the pieces of $G$ attaching to the cliques of size at most three inside $G^*$.
We may choose our $C$-reductions such that the triangles among such cliques are now facial.
If the number of possible such profiles is bounded by some integer $q$ one may simply perform the following augmentation of $G'$:
Let $\mathcal{P}$ be the set of all possible profiles and let $\psi\colon \mathcal{P}\to\{ 1,\dots,q\}$ be a bijection.
Now, for each clique $J$ of size at most three from the interior of $C$ in $G'$ introduce a new vertex $v_J$ adjacent to all vertices of $J$ and let $G''$ be the resulting graph.
Then set $\chi(v) \coloneqq \emptyset$ for all $v\in V(G')$ and $\chi(v_J)\subseteq \{ 1,\dots,q\}$ such that $i\in \chi(v_J)$ if and only if $J$ has been assigned the profile $\psi^{-1}(i)$.
If one now applies \zcref{thm:IntroHomoMuralis} to the flat wall $W'$ in the $q$-colorful graph $(G'',\chi)$ to obtain a homogeneous wall $W''$, this homogeneity transfers directly to $G'$.
This second scenario using an abstract definition of ``profiles'' also contains the first example as ``being a neighbour of some specified vertex from $A$'' may be considered a valid definition for a profile.

\subsection{Algorithmic consequences of our result.}\label{subsec:consequences}
To illustrate the impact of \zcref{thm:IntroHomoMuralis} on existing algorithms, we discuss an application to a recent result of Morelle et al.\@ \cite{MorelleSST2023Faster}.
Due to the technical nature of the Homogeneous Wall Lemma, discussing its application requires us to open up procedures which are often discussed in a black-box style.
Thus we present a single in-depth discussion of the applicability of \zcref{thm:IntroHomoMuralis} to help the reader see how the result may be incorporated to speed up the running time of related algorithms.

A core result of the work of Morelle et al.\@ is a parameterized algorithm for the \textsc{$k$-$\mathcal{H}$-Minor Deletion} problem.
Let $k$ be a non-negative integer and $\mathcal{H}$ be a finite set of graphs.
\medskip

\textsc{$k$-$\mathcal{H}$-Minor Deletion}\\
\textbf{Input:} A graph $G$.\\
\textbf{Objective:} Find, if it exists, a set $S\subseteq V(G)$ with $|S|\leq k$ such that $G-S$ does not contain a graph from $\mathcal{H}$ as a minor.
\medskip

Given a finite set $\mathcal{H}$ of graphs, we denote by $\mathsf{h}_{\mathcal{H}}$ the value $\sum_{H \in \mathcal{H}}(|H| + ||H||)$.
With this notation the result of Morelle et al.\@ we are interested reads as follows.

\begin{proposition}[Morelle, Sau, Stamoulis, Thilikos \cite{MorelleSST2023Faster}]\label{prop:kHDeletion}
There exists a function $f_{\ref{prop:kHDeletion}}\colon\mathbb{N}\to\mathbb{N}$ and an algorithm that, for every finite set $\mathcal{H}$ of graphs, every non-negative integer $k$, and all graphs $G$, solves \textsc{$k$-$\mathcal{H}$-Minor Deletion} in time $2^{k^{\mathcal{O}(f_{\ref{prop:kHDeletion}}(\mathsf{h}_\mathcal{H}))}} |G|^2$.
\end{proposition}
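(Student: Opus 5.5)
The plan follows the standard \emph{irrelevant vertex} scheme combined with dynamic programming on graphs of bounded treewidth, as sketched in \zcref{sec:Intro}. Fix $\mathcal{H}$ and $k$, and let $t$ be the size of the largest clique minor of any member of $\mathcal{H}$, so that $t\le \mathsf{h}_\mathcal{H}$. The algorithm alternates two phases. Either the treewidth of $G$ is bounded by some threshold $r=r(k,\mathsf{h}_\mathcal{H})$, in which case we solve the instance directly by dynamic programming over a tree decomposition. Or it is not, in which case we find a large wall, refine it, and delete an irrelevant vertex. Since each deletion shrinks $G$, there are at most $|G|$ rounds. Each round costs linear time, using approximate treewidth and wall-finding routines together with the linear-time version of the Flat Wall Theorem, which gives the $|G|^2$ factor.

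\textbf{Bounded treewidth.} If $\mathsf{tw}(G)\le r$, we run a dynamic program that, for each bag, stores the partial deletion set restricted to the bag together with a ``folio'' of the boundaried graph. The folio is the set of rooted minors of size bounded in terms of $\mathsf{h}_\mathcal{H}$. The number of states is $2^{r^{\mathcal{O}(1)}}$ times the number of folios, which is at most $2^{r^{\mathcal{O}(\mathsf{h}_\mathcal{H})}}$. Hence the overall time of this phase is dominated by the size of $r$.

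\textbf{Large treewidth.} If $\mathsf{tw}(G)>r$, we extract a wall $W$ of order polynomial in $r$. We then apply the Flat Wall Theorem with parameter $t' = t + k$.
\begin{itemize}
\item If we obtain a $K_{t'}$-minor whose branch sets are grasped by $W$, then every solution $S$ must meet the few vertices that are forced. We branch on them, or conclude that the instance is a no-instance, since $k$ deletions cannot destroy a $K_{t+k}$-minor that is highly connected to $W$.
\item Otherwise we obtain an apex set $A$ with $|A|\le \mathsf{poly}(t')$ and a large flat wall $W'$ in $G-A$.
\end{itemize}
In the second case we colour each vertex of the compass by the \emph{profile} of its attachment. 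A profile records which apices it sees, together with the folio of the small-separation pieces hanging off the cliques of size at most three, exactly as in the second example after \zcref{thm:IntroHomoMuralis}. The number of colours is $q\le 2^{\mathsf{poly}(t')}\cdot|A|$.

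\textbf{Homogenisation and the main obstacle.} This is the step I expect to be the main obstacle. We homogenise $W'$ with respect to these $q$ colours and obtain a homogeneous flat subwall of order at least $(k+1)\cdot g(\mathsf{h}_\mathcal{H})$. Among its many disjoint homogeneous subwalls, at least one avoids any given solution $S$ with $|S|\le k$. Homogeneity then lets us reroute any $\mathcal{H}$-minor model of $G-S$ away from the central vertex $v$ of that subwall, using the standard combing argument through the flat part and the apices. Thus $v$ is irrelevant and we recurse on $G-v$. With the classical bound, homogenisation needs a wall of order $k^{\mathcal{O}(q)}$. Since $q$ is itself exponential in $\mathsf{poly}(\mathsf{h}_\mathcal{H})$, this forces $r = k^{f(\mathsf{h}_\mathcal{H})}$ for a rapidly growing $f$. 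Plugging this into the dynamic program yields $2^{k^{\mathcal{O}(f_{\ref{prop:kHDeletion}}(\mathsf{h}_\mathcal{H}))}}|G|^2$. The main technical burden is the rerouting proof of irrelevance, which relies on the full structure theorem for flat walls. The bookkeeping of $q$ through the homogenisation step is what produces the non-uniform function $f_{\ref{prop:kHDeletion}}$.
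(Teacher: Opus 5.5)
\textbf{There is a genuine gap: the number of colours you homogenise with depends on $k$, so your argument does not reach the stated running time.}

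The overall shape is right: treewidth dichotomy, Flat Wall Theorem, homogenisation, irrelevant vertex. The failing step is the one meant to give $2^{k^{\mathcal{O}(f(\mathsf{h}_\mathcal{H}))}}$. You apply the Flat Wall Theorem with $t'=t+k$, so the apex set only satisfies $|A|\le\mathsf{poly}(t+k)$. By your own count, $q\le 2^{\mathsf{poly}(t')}\cdot|A|$, which grows with $k$. Your later sentence ``$q$ is itself exponential in $\mathsf{poly}(\mathsf{h}_\mathcal{H})$'' contradicts this.

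Feeding such a $q$ into the classical bound $k^{\mathcal{O}(q)}$ forces a wall, and hence a treewidth threshold $r$, of order at least $k^{\mathsf{poly}(k)}$. Your dynamic program then costs $2^{r^{\mathcal{O}(1)}}$, which is at least doubly exponential in a polynomial of $k$. The algorithm you describe is FPT, but it does not achieve the stated bound. This is exactly the price of homogeneity that the cited algorithm of Morelle et al.\ has to get around.

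\textbf{The missing ingredient is the case distinction in Steps 1--3 of the outline in \zcref{subsec:consequences}.} One inspects many subwalls $W'$ of the flat wall and the apices of $A$ that have neighbours in $\mathsf{compass}(W')$.
\begin{itemize}
\item Homogenisation (Step 2) is performed only for a subwall whose attaching apices are controlled by a quantity depending on $\mathsf{h}_\mathcal{H}$ alone, and only with respect to those apices. Then $q$, and hence the exponent in $k^{\mathcal{O}(q)}$, depends only on $\mathsf{h}_\mathcal{H}$. This is precisely where the non-uniform degree comes from.
\item If no subwall qualifies (Step 3), one shows that some vertex $y$ lies in every optimal solution, or else the instance is a \textsc{No}-instance. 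One then computes a bounded set of candidates for $y$ and branches on $(G-y,k-1)$.
\end{itemize}
Your proposal has no such branching step; your only branching is in the clique-minor case. So nothing prevents $q$ from depending on $k$.

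Two smaller points:
\begin{itemize}
\item The irrelevant vertex $v$ must be fixed before any solution $S$ is considered. You instead take it as the centre of a subwall chosen to avoid a given $S$.
\item Defining $t$ as the largest clique minor of a member of $\mathcal{H}$ is wrong. A $K_{t+k}$-minor certifies a \textsc{No}-instance only when $t\ge\min_{H\in\mathcal{H}}|H|$. With $t=\mathsf{h}_\mathcal{H}$, such a minor simply yields \textsc{No} and needs no branching.
\end{itemize}
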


The running time of the algorithm in \zcref{prop:kHDeletion} is single-exponential in a polynomial in $k$.
However, this polynomial is of the form $k^{\mathcal{O}(f_{\ref{prop:kHDeletion}}(\mathsf{h}_\mathcal{H}))}$, that is, its degree depends on the choice of the \textsl{meta parameter} $\mathcal{H}$.
This behaviour is precisely what is meant by the term ``non-uniformity''.

Due to major milestones in the quest towards the creation of an efficient graph minors theory, the bounds for both the Grid Theorem \cite{ChuzhoyT2021Tighter} and the Flat Wall Theorem \cite{Chuzhoy2015Improved,KawarabayashiTW2018New,GorskySW2025Polynomial} are polynomial.
Hence there exists a constant $c$ such that in a $K_t$-minor-free graph $G$ one may either determine that the treewidth of $G$ is at most $(t + r)^c$ or there exists a set $A\subseteq V(G)$ with $|A|\leq t^c$ and a flat $r$-wall $W$ in $G-A$.
Note that every \textsc{Yes}-instance of \textsc{$k$-$\mathcal{H}$-Minor Deletion} must exclude $K_{k+\mathsf{h}_\mathcal{H}}$ as a minor.

The variant of the Homogeneous Wall Lemma used by Morelle et al.\@ stems from another work of Sau, Stamoulis, and Thilikos, namely Lemma 15 in \cite{SauST2024More}.

The algorithm from \zcref{prop:kHDeletion} now proceeds in three recursive steps, roughly outlined below.
\begin{description}
    \item[Step 1:] Check if the treewidth of $G$ is small.
    If not, find a small set $A\subseteq V(G)$ and large flat wall $W_1$ in $G-A$.
    Then consider several big subwalls $W'$ of $W$.
    If for some $W'$ the subset of $A$ with neighbours in the compass of $W'$ induces a complete graph as a minor whose size depends on $\mathsf{h}_\mathcal{H}$ but not on $k$, then proceed to \textbf{Step 2}.
    Otherwise proceed to \textbf{Step 3}.
    \item[Step 2:] Apply the Homogeneous Wall Lemma to $W'$ and find a vertex $v\in V(G-A)$ such that $(G-v,k)$ is equivalent to $(G,k)$ as an instance of \textsc{$k$-$\mathcal{H}$-Minor Deletion}.
    Recurse on $(G-v,k)$.
    \item[Step 3:]
    In this case one can show that there exists a vertex $y$ that must be contained in all optimal solutions or $(G,k)$ is a \textsc{No}-instance.
    This procedure detects a number of candidates for $y$ and branches over all of them by considering instances of the form $(G-y,k-1)$ recursively.
\end{description}
One may make two core observations on the origin of the dependency of the form $k^{\mathcal{O}(f_{\ref{prop:kHDeletion}}(\mathsf{h}_\mathcal{H}))}$ in the running time of their algorithm.
The first is:
The only reason for the dependency of the form $k^{\mathcal{O}(f_{\ref{prop:kHDeletion}}(\mathsf{h}_\mathcal{H}))}$ in \textbf{Step 2} is the direct reliance on the Homogeneous Wall Lemma.
Second: The reason for this type of dependency in \textbf{Step 3} may be found behind a more involved chain of arguments.
However, following along the chain of applications of different Lemmas one can see that also here the sole reason for the term $k^{\mathcal{O}(f_{\ref{prop:kHDeletion}}(\mathsf{h}_\mathcal{H}))}$ lies within an application of the Homogeneous Wall Lemma\footnote{For better comparison: In the proof of Theorem 6.1 in \cite{MorelleSST2023Faster} several quantities are defined. The necessary sizes for the initial flat wall in \textbf{Step 3} are $r_1$ and $r_2'$ where $r_1$ depends linearly on $r_2'$. The definition of $r_2'$ in turn depends on two functions where only one carries the dangerous dependency through its reliance on $r_2$.
In turn, $r_3$ again inherits the dependence from $r_4$, which inherits it from the definition of $t$.
The quantity $t$ finally depends on $r_5$ which is the function from the Homogeneous Wall Lemma.}.

Morelle et al.\@ ask in their paper if it is possible to change the running time from their original $2^{k^{\mathcal{O}(f_{\ref{prop:kHDeletion}}(\mathsf{h}_\mathcal{H}))}} n^2$ to an expression of the form $2^{f(\mathsf{h}_\mathcal{H}) \cdot k^d}$ for some function $f$ and some absolute constant $d$.
It now follows from the discussion above that simply replacing their exponential Homogeneous Wall Lemma with \zcref{thm:IntroHomoMuralis} achieves this goal.
Indeed, a more careful analysis of the proof of \zcref{prop:kHDeletion} in \cite{MorelleSST2023Faster} reveals that with this change we are even able to provide a concrete bound for the degree $d$ of the polynomial.

\begin{corollary}\label{cor:FasterkHDeletion}
There exists a computable function $f_{\ref{cor:FasterkHDeletion}}\colon\mathbb{N}\to\mathbb{N}$ and an algorithm that, for every finite set $\mathcal{H}$ of graphs, every non-negative integer $k$, and all graphs $G$, solves \textsc{$k$-$\mathcal{H}$-Minor Deletion} in time $2^{\mathcal{O}(f_{\ref{cor:FasterkHDeletion}}(\mathsf{h}_\mathcal{H}) \cdot k^{16})}|G|^2$.
\end{corollary}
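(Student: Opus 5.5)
We would not re-prove \zcref{prop:kHDeletion}. Instead we would rerun the algorithm of Morelle, Sau, Stamoulis, and Thilikos \cite{MorelleSST2023Faster} unchanged, except at one point: every call to the Homogeneous Wall Lemma (Lemma 15 of \cite{SauST2024More}) is replaced by a call to \zcref{thm:IntroHomoMuralis}. The proof then has two parts. First we check that the rest of their argument still works with this replacement. Second we redo the parameter bookkeeping in the proof of their Theorem 6.1 to get an explicit exponent.

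\textbf{Step 1: the replacement is admissible.} In each place where the homogeneous wall is used, we encode the relevant types as colours. For \textbf{Step 2} of their algorithm this is the first example from the introduction: colour $i$ marks adjacency to $x_i\in A$. For the ``profiles'' of pieces attached to cliques of size at most three this is the second example, with one extra vertex $v_J$ added per clique. In both encodings $q$ is bounded by a function of $\mathsf{h}_\mathcal{H}$ and $|A|$, and $|A|\le (k+\mathsf{h}_\mathcal{H})^{c}$. Their later arguments (rerouting through the wall to certify an irrelevant vertex, and the arguments leading to the branching set in \textbf{Step 3}) use only three properties of the homogeneous subwall:
\begin{itemize}
\item flatness, witnessed by the same almost embedding as the original wall;
\item homogeneity of its bricks;
\item that its tangle agrees with the tangle of the original wall.
\end{itemize}
\zcref{thm:IntroHomoMuralis} gives all three, with $W_1\subseteq W_0$ and the tangle of $W_1$ a truncation of the tangle of $W_0$. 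So every lemma in their chain should go through verbatim. We would still check each invocation to make sure none of them relies on $W_1$ being a genuine subwall, for example on row or column indexing. Where one does, we would replace that use by the tangle-truncation property.

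\textbf{Step 2: recompute the parameters.} Following the footnote's chain $r_5\to t\to r_4\to r_3\to r_2\to r_2'\to r_1$, we substitute $r_5=\mathcal{O}(q^4 r^6)$ for the previous $r^{\mathcal{O}(q)}$. Each of the remaining dependencies in that chain is polynomial with an absolute degree: linear steps, plus the polynomial Flat Wall Theorem \cite{GorskySW2025Polynomial} and the polynomial Grid Theorem \cite{ChuzhoyT2021Tighter}. Hence the required initial wall size becomes $r_1\le f'(\mathsf{h}_\mathcal{H})\cdot k^{d_1}$ for an explicit $d_1$. The treewidth threshold, below which dynamic programming is used, is then $\mathsf{tw}\le (k+\mathsf{h}_\mathcal{H}+r_1)^{c}$. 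The running time is the product of two terms:
\begin{itemize}
\item the dynamic-programming cost, single-exponential in a polynomial of $\mathsf{tw}$;
\item the branching in \textbf{Step 3}: at most $k$ levels, each with a number of candidates polynomial in $r_1$.
\end{itemize}
Collecting the exponents should give $2^{\mathcal{O}(f(\mathsf{h}_\mathcal{H})\cdot k^{16})}|G|^2$. The quadratic dependence on $|G|$ is inherited, because \zcref{thm:IntroHomoMuralis} runs in time $\mathsf{poly}(q+k)\|G\|$, which is at most the per-iteration cost already paid.

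\textbf{Main obstacle.} We expect the main obstacle to be Step 2, specifically pinning down the exact degree $16$. That requires knowing precisely how $r$ enters the treewidth bound and the dynamic programming, and how the flat-wall and clique-minor thresholds compose. We would first write each $r_i$ as an explicit monomial in $k$ with coefficients depending only on $\mathsf{h}_\mathcal{H}$, and then track only the exponents of $k$. A secondary risk is a hidden subwall-specific step in Step 1. We would handle it as described there, by rephrasing that step through the tangle-truncation guarantee.
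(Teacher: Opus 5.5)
Your plan is essentially the paper's own argument. The paper justifies the corollary by substituting \zcref{thm:IntroHomoMuralis} for the exponential Homogeneous Wall Lemma in the algorithm of Morelle et al.\ and retracing the chain $r_5\to t\to r_4\to r_3\to r_2\to r_2'\to r_1$ from their Theorem~6.1; like you, it asserts rather than displays the bookkeeping that yields the degree $16$. One caution for that bookkeeping: the old bound has the form $k^{\mathcal{O}(f(\mathsf{h}_\mathcal{H}))}$, which means the number $q$ of colours in those calls depends only on $\mathsf{h}_\mathcal{H}$, so keep it that way rather than bounding $q$ through $|A|\le (k+\mathsf{h}_\mathcal{H})^{c}$ (for instance by colouring adjacency to all of $A$), since that would change the degree of $k$.
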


The impact of \zcref{thm:IntroHomoMuralis} on the running time of other known parameterized algorithms is similar.
The two algorithms for \textsc{$k$-Elimination Distance to $\mathcal{G}$} for minor-closed graph classes $\mathcal{G}$ by Morelle et al.\@ \cite{MorelleSST2023Faster} profit from our main theorem in the same way as \zcref{prop:kHDeletion}.
Even graph modifications that do not fall under the umbrella of minor-friendly operations see direct improvements to the running time of their associated algorithms \cite{MorelleST2025Grapha}.

Beyond the scope of specialised algorithms one may also focus on the algorithm for \textsc{$k$-Disjoint Paths}.
As described above, the algorithm has two core routines: One for graphs of bounded treewidth and one if the treewidth is large.
The exact bound on the treewidth depends on four functions.
The first two are the function $g_1$ from the Grid Theorem itself and the function $g_2$ from the Flat Wall Theorem, both are known to be polynomial \cite{ChuzhoyT2021Tighter,Chuzhoy2015Improved,KawarabayashiTW2018New,GorskySW2025Polynomial}.
The third is the function $g_3$ from the Homogeneous Wall Lemma and the fourth is the so-called \emph{Unique Linkage Function} $g_4$ \cite{RobertsonS2009Graph,RobertsonS2012Graph}.
The total bound amounts to $g_1\!\left( g_2\!\left( g_3\!\left( g_4\!\left( k \right) \right) \right) \right) \in g_3 \!\left( g_4 \!\left( k \right) \right)^c$ for some universal constant $c>1$.

With previous bounds on $g_3$ this means that the bound on the treewidth distinguishing between the two cases of the \textsc{$k$-Disjoint Paths} algorithm is of the form $2^{\mathcal{O}(g_4(k))}$.
Applying \zcref{thm:IntroHomoMuralis} reduces this bound to $g_4(k)^{\mathcal{O}(1)}$.
As mentioned above, any application of the Irrelevant Vertex Technique relies on all four functions mentioned above.
Hence, the improvement explained above applies to any application of Irrelevant Vertices in the case of $H$-minor-free graphs\footnote{In most cases, the presence of a large clique minor provides an irrelevant vertex in more straightforward way with very good bounds. Only few examples are known where the clique case poses a challenge on its own (see \cite{FominLPSZ2020Hitting}).}.

Providing explicit and good bounds for $g_4$ is maybe one of the biggest challenges in the algorithmic theory of graph minors.
Robertson and Seymour did not give any explicit bound.
A first bound was found by Geelen, Huynh, and Richter \cite{GeelenHR2018Explicit} but their estimate is an iterated power tower function.
Before that, Kawarabayashi and Wollan announced a new proof for the Unique Linkage Theorem \cite{KawarabayashiW2010Shorter} but never computed their bound.
Wollan estimates this bound to be at most four-fold exponential \cite{Wollan2024Personal}.
The only known lower bound on $g_4$ is due to Adler, Kolliopoulos, Krause, Lokshtanov, Saurabh, and Thilikos \cite{AdlerKKLST2011Tight,AdlerK2019Lower} and shows that $g_4(k) \in 2^{\Omega(k)}$.
With our result and the recently established polynomial bounds for the Graph Minor Structure Theorem \cite{GorskySW2025Polynomial}, the only remaining pillar of algorithmic graph minor theory still resisting optimisation is now the Unique Linkage Function itself. 

\subsection{A short overview of our proof.}\label{subsec:proofOverview}
We conclude this introduction with a short description of our proof.
Our main tool is the ``planar-like'' behaviour of a flat wall, so for the purpose of this summary we advise the reader to think of $G$ as a planar graph.
Moreover, instead of walls, we think of a slight generalisation of walls called ``meshes''.
In short, a mesh $M$ is made up by a pair of families of disjoint paths $\mathcal{P}$ and $\mathcal{Q}$ intersecting as follows.
There are orderings $\mathcal{P}=\{ P_1,\dots,P_n\}$ and $\mathcal{Q}=\{ Q_1,\dots,Q_m\}$ such that every $Q_i\in\mathcal{Q}$ is a $V(P_1)$-$V(P_n)$-path intersecting all paths from $\mathcal{P}$ in order and where $Q_i \cap P_j$ is also a path.
Furthermore, every \(P_i \in \mathcal{P}\) is a \(V(Q_1)\)-\(V(Q_m)\)-path intersecting the paths from \(\mathcal{Q}\) in order.
The mesh $M$ is now the union of all those paths and we say that $M$ is an $(n \times m)$-mesh.
We also call the paths in $\mathcal{P}$ the \emph{horizontal paths} and the paths in $\mathcal{Q}$ the \emph{vertical paths}.
The cycle $P_1 \cup Q_1 \cup P_n \cup Q_m$ is the \emph{perimeter} of $M$.
See \zcref{fig:Intromesh} for an illustration of a $(6 \times 6)$-mesh.

\begin{figure}[ht]
    \centering
    \begin{tikzpicture}

        \pgfdeclarelayer{background}
		\pgfdeclarelayer{foreground}
			
		\pgfsetlayers{background,main,foreground}

        \begin{pgfonlayer}{background}
            \pgftext{\includegraphics[width=6cm]{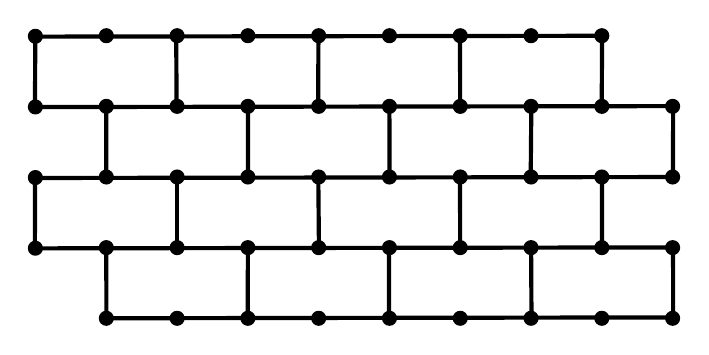}} at (C.center);
        \end{pgfonlayer}{background}
			
        \begin{pgfonlayer}{main}
        \node (C) [v:ghost] {};

        \node (bump) [v:ghost,position=270:2.5cm from C] {};
            
        \end{pgfonlayer}{main}
        
        \begin{pgfonlayer}{foreground}
        \end{pgfonlayer}{foreground}

    \end{tikzpicture}
    \begin{tikzpicture}

        \pgfdeclarelayer{background}
		\pgfdeclarelayer{foreground}
			
		\pgfsetlayers{background,main,foreground}

        \begin{pgfonlayer}{background}
            \pgftext{\includegraphics[width=5cm]{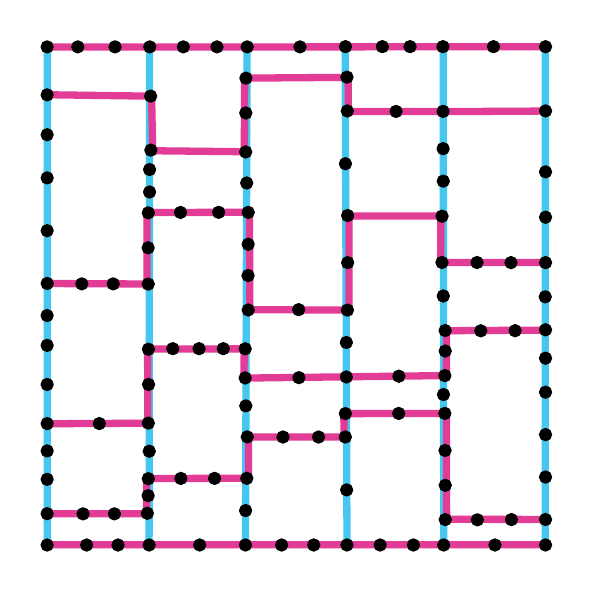}} at (C.center);
        \end{pgfonlayer}{background}
			
        \begin{pgfonlayer}{main}
        \node (C) [v:ghost] {};
            
        \end{pgfonlayer}{main}
        
        \begin{pgfonlayer}{foreground}
        \end{pgfonlayer}{foreground}

    \end{tikzpicture}
    \caption{A $4$-wall (on the left) and a $(6 \times 6)$-mesh (on the right).}
    \label{fig:Intromesh}
\end{figure}

Every wall is also a mesh.
The advantage of meshes is that they allow us to more easily to use their horizontal and vertical paths to divide the plane into regions.
This reduces a large amount of the discussion on the position of colours to a combinatorial problem with a geometric flavour.

The key insight allowing us to drop the bounds from exponential to polynomial is that one does not have to take a \textsl{subwall} to retain algorithmic applicability.
A subwall refers to a wall $W'$ obtained from another wall $W$ by taking
subpaths of horizontal paths of $W$ as horizontal paths of $W'$ and taking subpaths of vertical paths of $W$ as vertical paths of $W'$.
This restriction severely limits the applicability of more involved methods of counting and creative applications of the pigeon hole principle.
By allowing $W'$ to be a \textsl{subgraph} agreeing on the same tangle, we are able to retain the original (almost) embedding of $W$ as a witness of the flatness of $W'$ -- or in this example of planarity. 
This is all that is necessary for all known applications of the Irrelevant Vertex Technique.

\paragraph{Strips.}
We may now imagine our mesh $M$ to be embedded on a disk $\Delta$ with its perimeter drawn on the boundary of $\Delta$.
To further aid our intuition we imagine $\Delta$ to be a rectangle where each of the four paths of $M$ making up the perimeter is drawn on its private side of the rectangle and the vertices where horizontal and vertical perimeter paths intersect are drawn in the corners.

Now, any two horizontal (or vertical) paths from $M$ carve out another rectangle -- at least up to homeomorphism -- from $\Delta$ containing all the horizontal (or vertical) paths drawn in between them.
More explicitly, $P_i$ and $P_{i+k}$ together with the boundary of $\Delta$ bound a unique disk which contains $P_{i+j}$ for all $j\in\{ 1,\dots,k-1\}$.
We will use this to create what we call \emph{strips}, i.e. the restriction of $G$ to the part drawn in this disk defined by $P_i$ and $P_{i+k}$.
\zcref{fig:StripsOverview} contains a schematic illustration of a vertical strip.

\begin{figure}[ht]
    \centering
    \begin{tikzpicture}

        \pgfdeclarelayer{background}
		\pgfdeclarelayer{foreground}
			
		\pgfsetlayers{background,main,foreground}
			
        \begin{pgfonlayer}{main}
        \node (C) [v:ghost] {};

        \node(L) [v:ghost,position=180:5cm from C] {
            \begin{tikzpicture}

                \pgfdeclarelayer{background}
		          \pgfdeclarelayer{foreground}
			
		          \pgfsetlayers{background,main,foreground}

                \begin{pgfonlayer}{background}
                    \pgftext{\includegraphics[width=4cm]{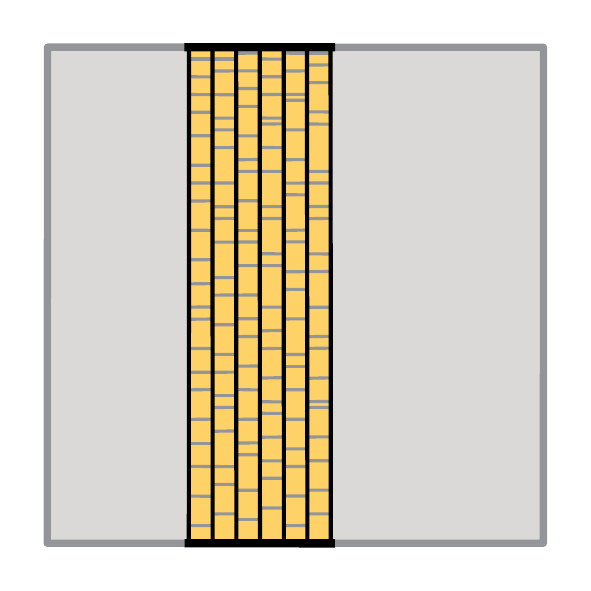}} at (C.center);
                \end{pgfonlayer}{background}
			
                \begin{pgfonlayer}{main}
                    \node (C) [v:ghost] {};
            
                \end{pgfonlayer}{main}
        
                \begin{pgfonlayer}{foreground}
                \end{pgfonlayer}{foreground}

            \end{tikzpicture}
        };

        \node(M) [v:ghost,position=0:0cm from C] {
            \begin{tikzpicture}

                \pgfdeclarelayer{background}
		          \pgfdeclarelayer{foreground}
			
		          \pgfsetlayers{background,main,foreground}

                \begin{pgfonlayer}{background}
                    \pgftext{\includegraphics[width=4cm]{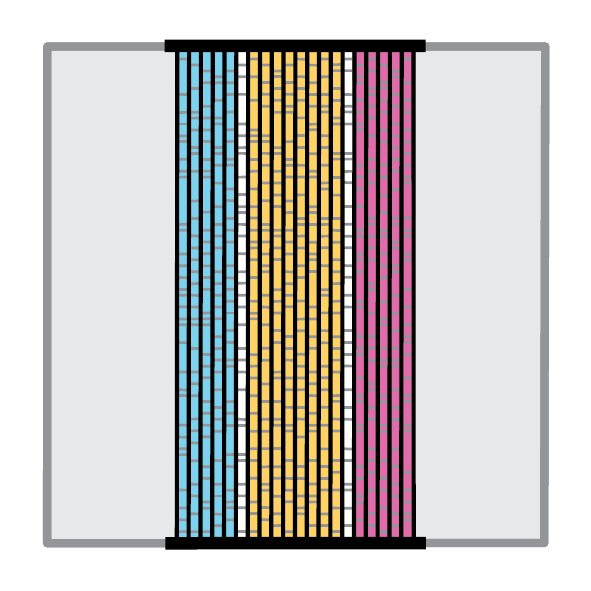}} at (C.center);
                \end{pgfonlayer}{background}
			
                \begin{pgfonlayer}{main}
                    \node (C) [v:ghost] {};
            
                \end{pgfonlayer}{main}
        
                \begin{pgfonlayer}{foreground}
                \end{pgfonlayer}{foreground}

            \end{tikzpicture}
        };

        \node (Llabel) [v:ghost,position=270:2.3cm from L] {(i)};

        \node (Mlabel) [v:ghost,position=270:2.3cm from M] {(ii)};
            
        \end{pgfonlayer}{main}
        
        \begin{pgfonlayer}{foreground}
        \end{pgfonlayer}{foreground}

    \end{tikzpicture}
    \caption{Diagrams of (i) a strip in a mesh -- the grey area is the rectangle $\Delta$ and (ii) a ``padded'' strip partitioned into a middle part and two buffer zones (one of the left and the other on the right).}
    \label{fig:StripsOverview}
\end{figure}

To each strip $S$ -- vertical or horizontal -- we may associate a \emph{colour profile}, which is simply the set of colours $i$ such that at least one vertex drawn in $S$ carries $i$ in its palette.

\paragraph{Step 1: Homogenise strips.}
Our goal is to find the following:
Suppose we have $q$ colours and are given positive integers $k$ and $r$.
Our first target is to start with a large collection of wide -- polynomial in $q$, $k$, and $r$ -- vertical (or horizontal) and pairwise disjoint strips, and find a collection of $k$ pairwise disjoint strips $\mathcal{S}$ together with a set $I\subseteq \{ 1,\dots,q\}$ such that for each $i\in I$, at least $r$ strips from $\mathcal{S}$ contain $i$ in their profile and for each $i\in \{ 1,\dots,q\}\setminus I$ no strip in $\mathcal{S}$ carries $i$.
\smallskip

If we were not interested in polynomial bounds we could easily achieve something stronger.
Let $\mathcal{S}_0$ be the initial selection of strips.
For each $i\in\{ 1,\dots,q\}$, let $\mathcal{S}_{i}$ be obtained from $\mathcal{S}_{i-1}$ by the following selection process.
If at least $\nicefrac{|\mathcal{S}_{i-1}|}{2}$ strips have $i$ in their palette let $\mathcal{S}_i$ be all those strips with $i$ in their palette.
Otherwise let $\mathcal{S}_i$ be all strips from $\mathcal{S}_{i-1}$ which do not carry $i$ in their palette.
As a result we have that, if $|\mathcal{S}_0|\geq 2^q \cdot r$, then $|\mathcal{S}_q|\geq r$ and the palettes of all strips in $\mathcal{S}_q$ are equal.
\smallskip

To ensure that this process is also polynomial in $q$, we alter it slightly.
Instead of considering plain strips, we split each strip into three parts: Two small strips, one on either side, acting as a buffer and a large middle strip.
See \zcref{fig:StripsOverview} for an illustration of such a splitting of a strip.
These buffers will help us to sort out colours more efficiently and also provide sufficient infrastructure to eventually find our desired mesh.
Such infrastructure is necessary as colours that occur only close to the boundary of a strip are difficult to incorporate into the middle of some mesh without leaving the confinement of the strips.

We may now alter the approach from above as follows.
Starting from a collection $\mathcal{A}_0$ of strips, we first find a subset $I_1 \subseteq \{ 1,\dots,q\}$ of colours which appear in less than $r$ strips.
We then remove all strips that carry at least one colour from $I_1$, thereby losing at most $|I_1|\cdot r \leq q \cdot r$ strips, set $X_1 \coloneqq \{ 1,\dots,q\}\setminus I_1$, and let $\mathcal{B}_0$ be the collection of remaining strips.
Then we select $J_1 \subseteq X_1$ to be the set of colours $i$ which appear in less than $r$ middle parts of the strips from $\mathcal{B}_0$.
In case $J_1 = \emptyset$ we terminate.
Otherwise we first forget all strips with some colour from $J_1$ in their middle part -- there are at most $q\cdot r$ many -- and set $Y_1 \coloneqq X_1 \setminus J_1$.
From each of the remaining strips we only keep their middle part, thereby forming the set $\mathcal{A}_1$ of remaining strips which we again partition into a middle part and two buffer zones.

Iterating this procedure eventually yields a set $\mathcal{B}_j$ of strips and a set $X_j$ of colours such that
\begin{enumerate}
    \item the profile of each strip in $\mathcal{B}_j$ is a subset of $X_j$,
    \item for each $i\in X_j$ there are at least $r$ strips in $\mathcal{B}_j$ whose middle parts carry the colour $i$, and
    \item $|\mathcal{B}_j| \geq |\mathcal{A}_0| - q^2r$ and each strip has shed its buffer zones at most $q$ times, meaning all strips are still as large as we desire.
\end{enumerate}

\paragraph{Step 2: Tiles and further homogenisation.}
The procedure from \textbf{Step 1} now yields a big collection of horizontal strips which are close to being homogeneous and a big collection of vertical strips which are also close to being homogeneous.
We need one further step of homogenisation before we can start constructing the homogeneous mesh we desire.

\begin{figure}[ht]
    \centering
    \begin{tikzpicture}

        \pgfdeclarelayer{background}
		\pgfdeclarelayer{foreground}
			
		\pgfsetlayers{background,main,foreground}
			
        \begin{pgfonlayer}{main}
        \node (C) [v:ghost] {};

        \node(L) [v:ghost,position=180:5cm from C] {
            \begin{tikzpicture}

                \pgfdeclarelayer{background}
		          \pgfdeclarelayer{foreground}
			
		          \pgfsetlayers{background,main,foreground}

                \begin{pgfonlayer}{background}
                    \pgftext{\includegraphics[width=4cm]{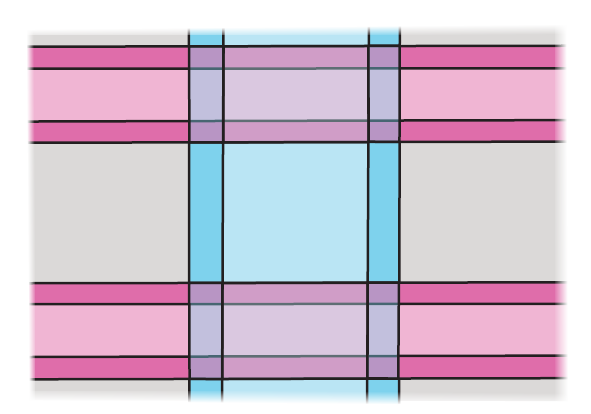}} at (C.center);
                \end{pgfonlayer}{background}
			
                \begin{pgfonlayer}{main}
                    \node (C) [v:ghost] {};
            
                \end{pgfonlayer}{main}
        
                \begin{pgfonlayer}{foreground}
                \end{pgfonlayer}{foreground}

            \end{tikzpicture}
        };

        \node(M) [v:ghost,position=0:0cm from C] {
            \begin{tikzpicture}

                \pgfdeclarelayer{background}
		          \pgfdeclarelayer{foreground}
			
		          \pgfsetlayers{background,main,foreground}

                \begin{pgfonlayer}{background}
                    \pgftext{\includegraphics[width=4cm]{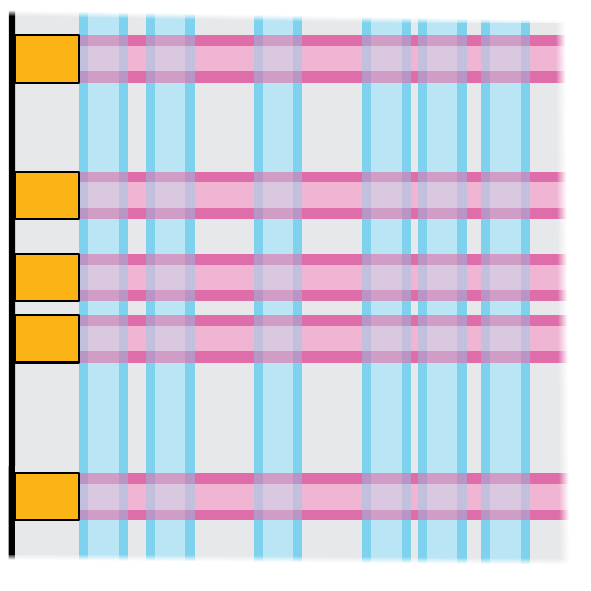}} at (C.center);
                \end{pgfonlayer}{background}
			
                \begin{pgfonlayer}{main}
                    \node (C) [v:ghost] {};
            
                \end{pgfonlayer}{main}
        
                \begin{pgfonlayer}{foreground}
                \end{pgfonlayer}{foreground}

            \end{tikzpicture}
        };

        \node(R) [v:ghost,position=0:5cm from C] {
            \begin{tikzpicture}

                \pgfdeclarelayer{background}
		          \pgfdeclarelayer{foreground}
			
		          \pgfsetlayers{background,main,foreground}

                \begin{pgfonlayer}{background}
                    \pgftext{\includegraphics[width=4cm]{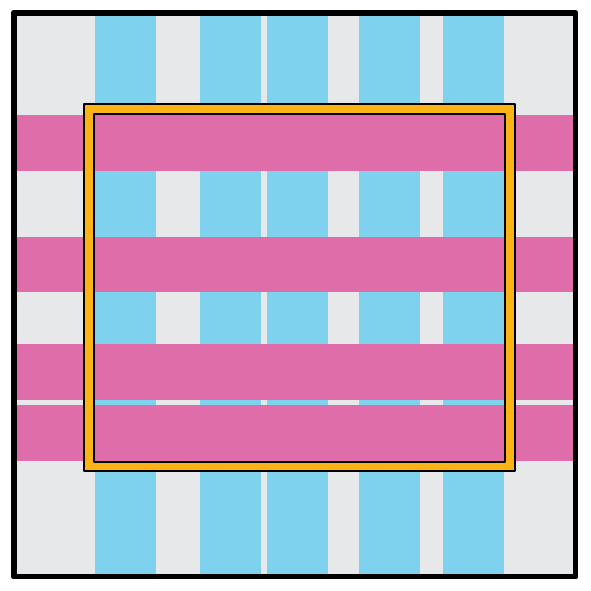}} at (C.center);
                \end{pgfonlayer}{background}
			
                \begin{pgfonlayer}{main}
                    \node (C) [v:ghost] {};
            
                \end{pgfonlayer}{main}
        
                \begin{pgfonlayer}{foreground}
                \end{pgfonlayer}{foreground}

            \end{tikzpicture}
        };

        \node (Llabel) [v:ghost,position=270:2.3cm from L] {(i)};

        \node (Mlabel) [v:ghost,position=270:2.3cm from M] {(ii)};

        \node (Rlabel) [v:ghost,position=270:2.3cm from R] {(iii)};
            
        \end{pgfonlayer}{main}
        
        \begin{pgfonlayer}{foreground}
        \end{pgfonlayer}{foreground}

    \end{tikzpicture}
    \caption{Diagrams of (i) tiles created by overlaying horizontal and vertical strips, (ii) dangerous ``end tiles'' that are not fully surrounded by buffer zones, and (iii) a submesh cropped to the union of horizontal and vertical strips.}
    \label{fig:TilesOverview}
\end{figure}

Overlaying vertical and horizontal strips gives rise to two new types of areas: Those areas where a horizontal and a vertical strip intersect (\emph{type 1}), and the area exclusive to some vertical (horizontal) strip, two sides of which are bordered by the boundary of the mesh or some horizontal (vertical) strip (\emph{type 2}).
See \zcref{fig:TilesOverview} for an illustration.
We call these area \emph{tiles}.

Note that some tiles are fully surrounded by buffer areas while others are surrounded by buffer areas only on three sides.
The latter may be considered to be ``end tiles'' as this only occurs at the boundary of our mesh as indicated in \zcref{fig:TilesOverview}.
For each tile we may now define a ``middle part'' as the intersection of the middle parts of the two strips involved if the tile is of type 1 and the intersection of the tile with the middle part of its strip if the tile is of type 2.

From \textbf{Step 1} we know that any colour that occurs in the profile of some vertical or horizontal strip in our collection must also occur in the middle part of at least $r$ strips.
Our next goal is to further refine this statement to say that any colour occurring in some strip occurs in the middle parts of at least $r$ distinct tiles which are \textsl{not} end tiles.
However, this requires additional arguments and sacrifices as some parts of one strip now belong to the buffer areas of another and because it might be possible that some colours are confined mostly into end tiles.

To realise our desired outcome, we essentially repeat the sorting procedure from the first step, but now we incorporate a step where we also crop the entire mesh in order to dismiss end tiles (see \zcref{fig:TilesOverview} for an illustration).
As in \textbf{Step 1}, in each iteration we only throw away a bounded number of strips and we shrink each strip only by a polynomial amount in order to readjust its buffer zones.
Since the total number of iterations is bounded by the number of remaining colours, the total sacrifice in number of strips and in their width remains polynomial as before.

\paragraph{Step 3: Constructing the homogeneous mesh.}
The outcome of \textbf{Step 2} is a collection of horizontal and vertical strips $\mathcal{S}$ together with a subset $X$ of the original $q$ colours such that the strips from $\mathcal{S}$ carry only the colours in $X$ and each colour in $X$ occurs in the middle parts of at least $r$ distinct non-end tiles.
All that is left is to show how this situation may be utilised to construct a homogeneous mesh.

\begin{figure}[ht]
    \centering
    \begin{tikzpicture}

        \pgfdeclarelayer{background}
		\pgfdeclarelayer{foreground}
			
		\pgfsetlayers{background,main,foreground}
			
        \begin{pgfonlayer}{main}
        \node (C) [v:ghost] {};

        \node(L) [v:ghost,position=180:5cm from C] {
            \begin{tikzpicture}

                \pgfdeclarelayer{background}
		          \pgfdeclarelayer{foreground}
			
		          \pgfsetlayers{background,main,foreground}

                \begin{pgfonlayer}{background}
                    \pgftext{\includegraphics[width=4cm]{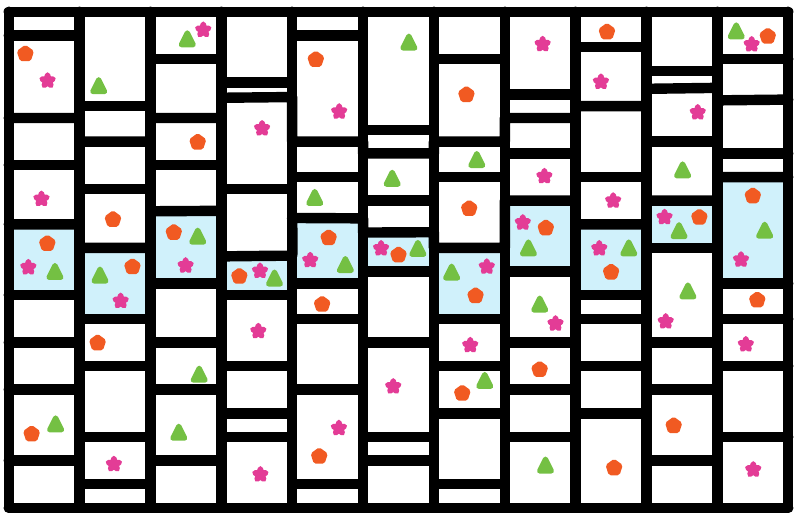}} at (C.center);
                \end{pgfonlayer}{background}
			
                \begin{pgfonlayer}{main}
                    \node (C) [v:ghost] {};
            
                \end{pgfonlayer}{main}
        
                \begin{pgfonlayer}{foreground}
                \end{pgfonlayer}{foreground}

            \end{tikzpicture}
        };

        \node(M) [v:ghost,position=0:0cm from C] {
            \begin{tikzpicture}

                \pgfdeclarelayer{background}
		          \pgfdeclarelayer{foreground}
			
		          \pgfsetlayers{background,main,foreground}

                \begin{pgfonlayer}{background}
                    \pgftext{\includegraphics[width=4cm]{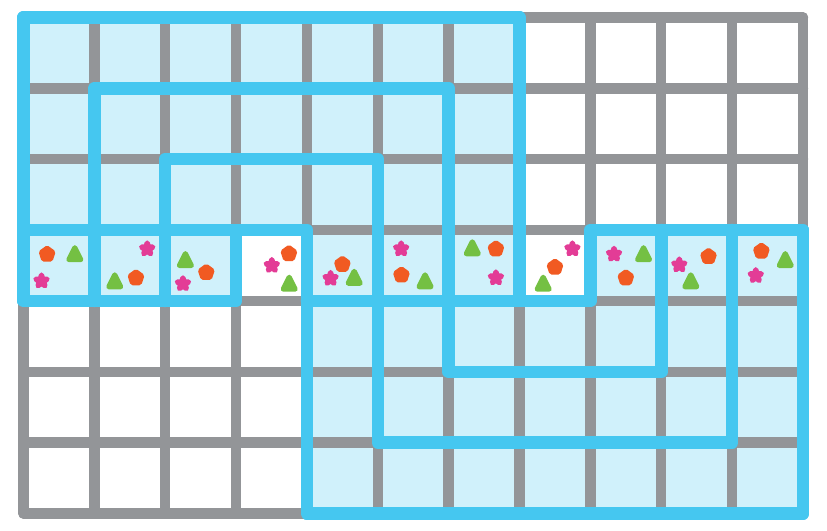}} at (C.center);
                \end{pgfonlayer}{background}
			
                \begin{pgfonlayer}{main}
                    \node (C) [v:ghost] {};
            
                \end{pgfonlayer}{main}
        
                \begin{pgfonlayer}{foreground}
                \end{pgfonlayer}{foreground}

            \end{tikzpicture}
        };

        \node (Llabel) [v:ghost,position=270:2.3cm from L] {(i)};

        \node (Mlabel) [v:ghost,position=270:2.3cm from M] {(ii)};
            
        \end{pgfonlayer}{main}
        
        \begin{pgfonlayer}{foreground}
        \end{pgfonlayer}{foreground}

    \end{tikzpicture}
    \caption{Diagrams of (i) a mesh with a rainbow middle row and (ii) the construction of a homogeneous mesh from a rainbow-middle-row mesh.}
    \label{fig:HomoOverview}
\end{figure}

This construction is done in two steps.
First, we make use of the buffer zones and infrastructure of the union of all strips in $\mathcal{S}$ to construct a new mesh $M'$ where each face of the ``middle row'' of $M'$ carries every colour in $X$.
We call $M'$ a mesh with a \emph{rainbow middle row}.
See \zcref{fig:HomoOverview}.

By choosing $r$ and $M'$ large enough to ensure that $M'$ has $k^2$ many faces in the middle row, we may then weave vertical paths of a new mesh $M''$ around the middle row to construct the final homogeneous mesh as illustrated in \zcref{fig:HomoOverview}.

\newpage

\section{Preliminaries}\label{sec:wallsAndTangles}
We generally adhere to the notation for graphs defined in \cite{Diestel2010Graph}.
All graphs we consider are simple, meaning that they contain neither loops nor parallel edges, and contain at least one vertex.

By $\mathbb{Z}$ we denote the set of integers.
Given any two integers $a,b\in\mathbb{Z}$, we write $[a,b]$ for the set $\{z\in\mathbb{Z} ~\!\colon\!~ a\leq z\leq b\}$.
The set $[a,b]$ is empty whenever $a>b$.
For any positive integer $c$ we set $[c]\coloneqq [1,c]$.

Several notions from the Graph Minor Structure Theory due to Robertson and Seymour are fundamental to our methods.
Thus we use this section mainly to introduce the basic concepts from that context that we will need in our work.

\subsection{Tangles}\label{subsec:Tangles}
A \emph{separation} in a graph $G$ is a pair $(A,B)$ of vertex sets such that $A \cup B = V(G)$ and there is no edge in $G$ with one endpoint in $A \setminus B$ and the other in $B \setminus A$.
The \emph{order} of $(A,B)$ is $|A\cap B|$.
If $k$ is a positive integer, we denote the collection of all separations $(A,B)$ of order less than $k$ in $G$ as $\mathcal{S}_k(G)$.

An \emph{orientation} of $\mathcal{S}_k(G)$ is a set $\mathcal{O}$ such that for all $(A,B) \in \mathcal{S}_k(G)$ exactly one of $(A,B)$ and $(B,A)$ belongs to $\mathcal{O}$. 
A \emph{tangle} of order $k$ in $G$ is an orientation $\mathcal{T}$ of $\mathcal{S}_k(G)$ such that for all $(A_1,B_1), (A_2,B_2), (A_3,B_3) \in \mathcal{T}$, it holds that $G[A_1] \cup G[A_2] \cup G[A_3] \neq G$.
If $\mathcal{T}$ is a tangle and $(A,B)\in\mathcal{T}$, we call $A$ the \emph{small side} and $B$ the \emph{big side} of $(A,B)$.

Let $G$ be a graph and $\mathcal{T}$ and $\mathcal{D}$ be tangles of $G$.
We say that $\mathcal{D}$ is a \emph{truncation} of $\mathcal{T}$ if $\mathcal{D}\subseteq\mathcal{T}$.

\subsection{Walls and meshes}\label{subsec:Walls}
The main objects of our interest are walls.
However, for convenience we actually consider a slightly more general structure called ``meshes''.

\paragraph{Grids and Walls.}
Let $n,m\geq 1$ be integers.
The \emph{$(n \times m)$-grid} is the graph with vertex set $[n] \times [m]$ where $\{(i_1,j_1),(i_2,j_2)\}$ is an edge if and only if
\begin{itemize}
    \item $i_1 = i_2$ and $|j_1 - j_2|=1$, or
    \item $j_1 = j_2$ and $|i_1 - i_2|=1$.
\end{itemize}

\begin{figure}[ht]
    \centering
    \begin{tikzpicture}

        \pgfdeclarelayer{background}
		\pgfdeclarelayer{foreground}
			
		\pgfsetlayers{background,main,foreground}

        \begin{pgfonlayer}{background}
            \pgftext{\includegraphics[width=9cm]{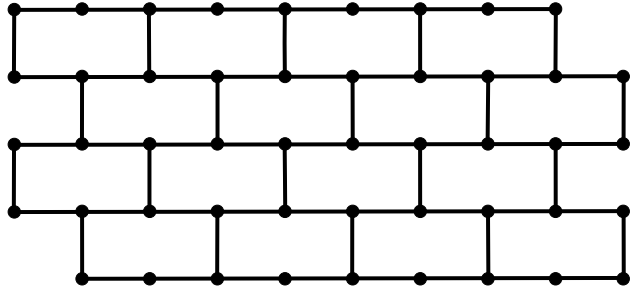}} at (C.center);
        \end{pgfonlayer}{background}
			
        \begin{pgfonlayer}{main}
        \node (C) [v:ghost] {};
            
        \end{pgfonlayer}{main}
        
        \begin{pgfonlayer}{foreground}
        \end{pgfonlayer}{foreground}

    \end{tikzpicture}
    \caption{The elementary $4$-wall.}
    \label{fig:wall}
\end{figure}

Given an integer $n$, the \emph{elementary $n$-wall} is the graph obtained from the $(n \times 2n)$-grid by deleting all edges of the form
\begin{itemize}
    \item $\{(i,j),(i+1,j) \}$ for $i\in[n-1]$ odd and $j\in[2n]$ even, and
    \item $\{(i,j),(i+1,j) \}$ for $i\in[n-1]$ even and $j\in[2n]$ odd.
\end{itemize}
See \zcref{fig:wall} for an illustration.
An \emph{$n$-wall} is any graph that can be obtained from the elementary $n$-wall by subdividing its edges.

One can easily see that any graph that contains the $(n\times 2n)$-grid as a minor must contain an $n$-wall as a subgraph.
Moreover, every $n$-wall contains the $(n\times n)$-grid as a minor.

\paragraph{Meshes.}
Let $A, B \subseteq V(G)$, an \emph{$A$-$B$-path} is a path $P$ that has one endpoint in $A$, the other in $B$, and $V(P) \cap (A \cup B)$ only contains the endpoints of $P$.
Let $n,m$ be integers with $n,m\geq 2$.
A \emph{$(n\times m)$-mesh} is a graph $M$ which is the union of paths $M=P_1\cup\cdots\cup P_n\cup Q_1\cup \cdots \cup Q_m$ where
    \begin{itemize}
        \item $P_1,\cdots,P_n$ are pairwise vertex-disjoint, and $Q_1,\cdots,Q_m$ are pairwise vertex-disjoint.
        \item for every $i\in [n]$ and $j\in [m]$, the intersection $P_i\cap Q_j$ induces a path,
        \item each $P_i$ is a  $V(Q_1)$-$V(Q_m)$-path intersecting the paths $Q_1,\cdots Q_m$ in the given order, and each $Q_j$ is a $V(P_1)$-$V(P_m)$-path intersecting the paths $P_1,\cdots, P_h$ in the given order. 
    \end{itemize}
We say that the paths $P_1,\cdots,P_n$ are the \emph{horizontal paths}, and the paths $Q_1,\cdots,Q_m$ are the \emph{vertical paths}.
A mesh $M'$ is a \emph{submesh} of a mesh $M$ if every horizontal (vertical) paths of $M'$ is a subpath of a horizontal (vertical) paths $M$, respectively.
The \emph{perimeter} of the mesh $M$ is the cycle $P_1 \cup Q_1 \cup P_n \cup Q_m$.
We write \emph{$n$-mesh} as a shorthand for an $(n \times n)$-mesh.
See \zcref{fig:mesh} for an illustration. 

\begin{figure}[ht]
    \centering
    \begin{tikzpicture}

        \pgfdeclarelayer{background}
		\pgfdeclarelayer{foreground}
			
		\pgfsetlayers{background,main,foreground}

        \begin{pgfonlayer}{background}
            \pgftext{\includegraphics[width=8cm]{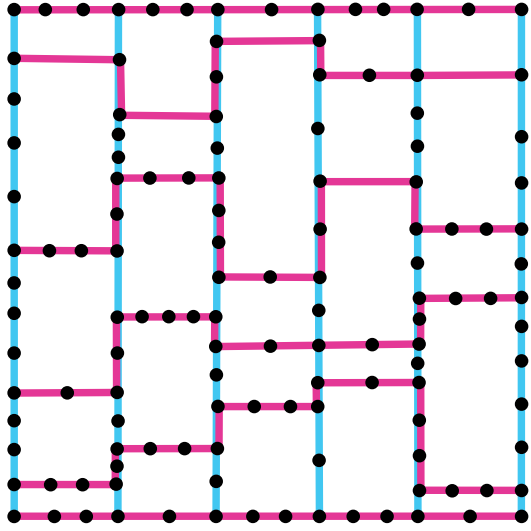}} at (C.center);
        \end{pgfonlayer}{background}
			
        \begin{pgfonlayer}{main}
        \node (C) [v:ghost] {};
            
        \end{pgfonlayer}{main}
        
        \begin{pgfonlayer}{foreground}
        \end{pgfonlayer}{foreground}

    \end{tikzpicture}
    \caption{A $(6 \times 6)$-mesh.}
    \label{fig:mesh}
\end{figure}

Every $n$-wall is in fact an $n$-mesh.
Moreover, every $2n$-mesh contains an $n$-wall.
So while meshes may appear much more complex than walls, they capture precisely the same kind of structure and therefore, it suffices to only consider meshes.

\paragraph{Tangles from meshes.}
Let $r, \ell \in \mathbb{N}$ with $r \geq \ell \geq 3$, let $G$ be a graph, and $M$ be an $(r \times \ell)$-mesh in $G$.
Let $\mathcal{T}_M$ be the orientation of $\mathcal{S}_\ell$ such that for every $(A,B) \in \mathcal{T}_M$, the set $B \setminus A$ contains the vertex set of both a horizontal and a vertical path of $M$, we call $B$ the \emph{$M$-majority side} of $(A,B)$.

It is a well-known fact that $\mathcal{T}_M$ defines a tangle of order $\ell$ \cite{RobertsonS1991Graph}.

\subsection{Almost embeddings and flatness}\label{sec:AlmostEmbedding}
The definitions we present are inspired by the work of \cite{KawarabayashiTW2018New,KawarabayashiTW2021Quickly,GorskySW2025Polynomial}.
Of course, most of them are in some way descendants of the work of Robertson and Seymour.
We will omit a definition of surfaces here (see \cite{MoharT2001Graphs} for a formal definition), as we will only be working in the disk and the sphere (equivalently the plane) in this paper.

\paragraph{Paintings in surfaces.}
A \emph{painting} in a surface $\Sigma$ is a pair $\Gamma = (U,N)$, where $N \subseteq U \subseteq \Sigma$, $N$ is finite, $U \setminus N$ has a finite number of arcwise-connected components, called \emph{cells} of $\Gamma$, and for every cell $c$, the closure $\overline{c}$ is a closed disk where $N_\Gamma(c) \coloneqq \overline{c} \cap N \subseteq \mathsf{bd}(\overline{c})$.
If $|N_\Gamma(c)| \geq 4$, the cell $c$ is called a \emph{vortex}.
We further let $N(\Gamma) \coloneqq N$, let $U(\Gamma) \coloneqq U$, and let $C(\Gamma)$ be the set of all cells of $\Gamma$.
\medskip

Any given painting $\Gamma = (U,N)$ defines a hypergraph with $N$ as its vertices and the set of closures of the cells of $\Gamma$ as its edges.
Accordingly, we call $N$ the \emph{nodes} of $\Gamma$.

\paragraph{Sphere-, Disk-, and $\Sigma$-renditions.}
Let $G$ be a graph and $\Sigma$ be a surface.
A \emph{$\Sigma$-rendition} of $G$ is a triple $\rho = (\Gamma, \sigma, \pi)$, where
\begin{itemize}
    \item $\Gamma$ is a painting in $\Sigma$,
    \item for each cell $c \in C(\Gamma)$, $\sigma(c)$ is a subgraph of $G$, and
    \item $\pi \colon N(\Gamma) \to V(G)$ is an injection,
\end{itemize}
such that
\begin{description}
    \item[R1] $G = \bigcup_{c \in C(\Gamma)}\sigma(c)$,
    \item[R2] for all distinct $c,c' \in C(\Gamma)$, the graphs $\sigma(c)$ and $\sigma(c')$ are edge-disjoint,
    \item[R3] $\pi(N_\Gamma(c)) \subseteq V(\sigma(c))$ for every cell $c \in C(\Gamma)$, and
    \item[R4] for every cell $c \in C(\Gamma)$, we have $V(\sigma(c) \cap \bigcup_{c' \in C(\Gamma) \setminus \{ c \}} (\sigma(c'))) \subseteq \pi(M_\Gamma(c))$.
\end{description}
We write $N(\rho)$ for the set $N(\Gamma)$, let $N_\rho(c) = N_\Gamma(c)$ for all $c \in C(\Gamma)$, and similarly, we lift the set of cells from $C(\Gamma)$ to $C(\rho)$.
If it is clear from the context which $\rho$ is meant, we will sometimes simply write $N(c)$ instead of $N_\rho(c)$, and if the $\Sigma$-rendition $\rho$ for $G$ is understood from the context, we usually identify the sets $\pi(N(\rho))$ and $N(\rho)$ along $\pi$ for ease of notation.

In the case where $\Sigma$ is homeomorphic to a sphere we talk about \emph{sphere-renditions} instead of $\Sigma$-renditions.
Similarly, if $\Sigma$ is homeomorphic to a disk, we call $\rho$ a \emph{disk-rendition}.
In this paper all renditions we consider are either sphere- or disk-renditions. 

\paragraph{Traces of paths and cycles.}
Let $\rho$ be a $\Sigma$-rendition of a graph $G$.
For every cell $c \in C(\rho)$ with $|N_\rho(c)| = 2$, we select one of the components of $\mathsf{bd}(c) - N_\rho(c)$.
This selection will be called a \emph{tie-breaker in $\rho$}, and we assume that every rendition comes equipped with a tie-breaker.

Let $G$ be a graph and $\rho$ be a $\Sigma$-rendition of $G$.
Let $Q$ be a cycle or path in $G$ that uses no edge of $\sigma(c)$ for every vortex $c \in C(\rho)$.
We say that $Q$ is \emph{grounded} if it uses edges of $\sigma(c_1)$ and $\sigma(c_2)$ for two distinct cells $c_1, c_2 \in C(\rho)$, or $Q$ is a path with both endpoints in $N(\rho)$.
If $Q$ is grounded we define the \emph{trace} of $Q$ as follows.
Let $P_1,\dots,P_k$ be distinct maximal subpaths of $Q$ such that $P_i$ is a subgraph of $\sigma(c)$ for some cell $c$.
Fix $i \in [k]$.
The maximality of $P_i$ implies that its endpoints are $\pi(n_1)$ and $\pi(n_2)$ for distinct nodes $n_1,n_2 \in N(\rho)$.
If $|N_\rho(c)| = 2$, let $L_i$ be the component of $\mathsf{bd}(c) - \{ n_1,n_2 \}$ selected by the tie-breaker, and if $|N_\rho(c)| = 3$, let $L_i$ be the component of $\mathsf{bd}(c) - \{ n_1,n_2 \}$ that is disjoint from $N_\rho(c)$.
We define $L_i'$ by pushing $L_i$ slightly so that it is disjoint from all cells in $C(\rho)$, while maintaining that the resulting curves intersect only at a common endpoint.
The \emph{trace} of $Q$ is defined to be $\bigcup_{i\in[k]} L_i'$.
If $Q$ is a cycle, its trace thus the homeomorphic image of the unit circle, and otherwise, it is an arc in $\Sigma$ with both endpoints in $N(\rho)$.

\paragraph{Aligned disks and flat subgraphs.}
Let $G$ be a graph and let $\rho = (\Gamma, \sigma, \pi)$ be a $\Sigma$-rendition of $G$. 
We say that a 2-connected subgraph $H$ of $G$ is \emph{grounded (in $\rho$)} if every cycle in $H$ is grounded and no vertex of $H$ is drawn by $\Gamma$ in a vortex of $\rho$.
A disk in $\Sigma$ is called \emph{$\rho$-aligned} if its boundary only intersects $\Gamma$ in nodes.
If $H$ is planar, we say that it is \emph{flat in $\rho$} if there exists a $\rho$-aligned disk $\Delta \subseteq \Sigma$ containing all cells $c \in C(\rho)$ with $E(\sigma(c)) \cap E(H) \neq \emptyset$ and $\Delta$ does not contain any vortices of $\Gamma$.

\paragraph{Societies.}
Let $\Omega$ be a cyclic ordering of the elements of some set which we denote by $V(\Omega)$.
A \emph{society} is a pair $(G,\Omega)$, where $G$ is a graph and $\Omega$ is a cyclic ordering with $V(\Omega)\subseteq V(G)$.
For a given set $S \subseteq V(\Omega)$ a vertex $s \in S$ is an \emph{endpoint} of $S$ if there exists a vertex $t \in V(\Omega) \setminus S$ that immediately precedes or succeeds $s$ in $\Omega$.
We call $S$ a \emph{segment} of $\Omega$ if $S$ has two or less endpoints.

Let $(G,\Omega)$ be a society and let $\Sigma$ be a surface with one boundary component $B$ homeomorphic to the unit circle.
A \emph{rendition} of $(G,\Omega)$ in $\Sigma$ is a $\Sigma$-rendition $\rho$ of $G$ such that the image under $\pi_{\rho}$ of $N(\rho) \cap B$ is $V(\Omega)$ and $\Omega$ is one of the two cyclic orderings of $V(\Omega)$ defined by the way the points of $\pi_{\rho}(V(\Omega))$ are arranged in the boundary $B$.

\section{A homogeneous wall}\label{sec:HomoMuralis}
In this section we perform steps 1 and 2 outlined in the intro.
This involves defining when we consider a wall to be flat or homogeneous, what strips within a flat wall are, and then proving that we can homogenise these strips appropriately.

\paragraph{Flat Walls.}
Let $n \ge 2$ be an integer.
Let $G$ be a graph, and let $M \subseteq G$ be an \((n \times n)\)-mesh.
We say that $M$ is a \emph{flat mesh} in $G$ if there exists a sphere-rendition $\rho$ of $G$ with a single vortex $c_0$ such that $M$ is flat in $\rho$ and the trace of the perimeter of $M$ in $\rho$ separates all vertices in $N(\delta) \cap V(M)$ from $c_0$.
We say that $\rho$ \emph{witnesses} the flatness of $M$.

\paragraph{Homogeneous meshes.}
Let $r\geq 3$ be an integer and $M$ be an $r$-mesh in $G$ and let $C_P$ be the perimeter of $M$.
We define the \emph{compass} of $M$, denoted as $\mathsf{compass}(M)$, as the union of $C_P$ and the unique $C_P$-bridge in $G$ that contains all vertices of $M - V(C_P)$.
Let $C\subseteq M$ be any cycle of $M$.
The \emph{compass} of $C$ is the union of $C$ together with all $C$-bridges in $G$ that are entirely contained in the compass of $M$.
We denote the compass of $C$ with respect to $M$ by $\mathsf{compass}_{M}(C)$.
The \emph{interior} of a cycle $C \subseteq M$, denoted by $\mathsf{int}_{M}(C)$, is the graph $\mathsf{compass}_{M,\rho}(C)-C$.
For both notions, we drop $M$ in the subscript if it is understood from the context.

\smallskip

Recall the definition of $q$-colorful graphs.
We allow for $q$ to be $0$.
In this case, for every $0$-colorful graph $(G,\chi)$ it holds that $\chi(v) = \emptyset$ for all $v\in V(G)$.
Moreover, in such a case we do not distinguish  between the graph $G$ and the $0$-colorful graph $(G,\chi)$.

Given an arbitrary $q$-colorful graph $(G,\chi)$, a set $X\subseteq V(G)$ and a subgraph $H\subseteq G$, we define
\begin{align*}
    \chi(X) \coloneq \bigcup_{v\in X} \chi(v),
\end{align*}
as well as $\chi(H) \coloneqq \chi (V(H))$.
Note that this should not be confused with the chromatic number of $H$, which is of no concern to us.
Finally, we write $(H,\chi)$ for the \emph{$q$-colorful subgraph} of $G$, where we implicitly restrict $\chi$ to the vertex set of $H$.
\medskip

\begin{figure}[ht]
    \centering
    \begin{tikzpicture}

        \pgfdeclarelayer{background}
		\pgfdeclarelayer{foreground}
			
		\pgfsetlayers{background,main,foreground}

        \begin{pgfonlayer}{background}
            \pgftext{\includegraphics[width=10cm]{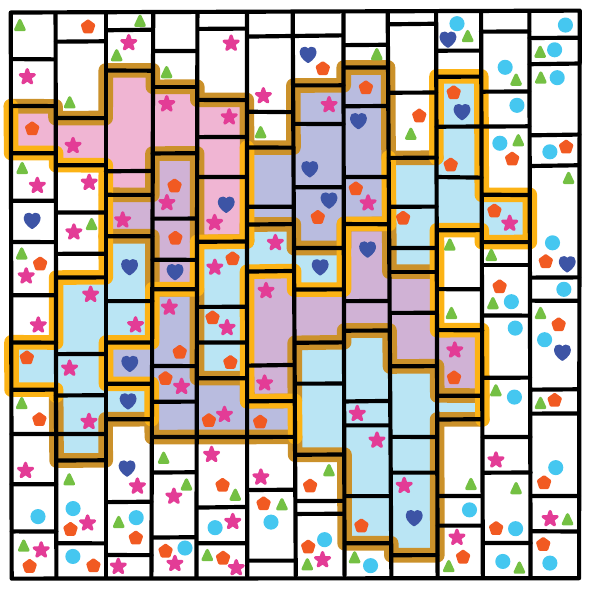}} at (C.center);
        \end{pgfonlayer}{background}
			
        \begin{pgfonlayer}{main}
        \node (C) [v:ghost] {};
            
        \end{pgfonlayer}{main}
        
        \begin{pgfonlayer}{foreground}
        \end{pgfonlayer}{foreground}

    \end{tikzpicture}
    \caption{An illustration of a flat mesh $M$ in a $5$-colorful graph $(G,\chi)$. The $5$ colours are represented as small shapes and each face of $M$ contains a symbol if and only if its interior carries the corresponding colour.
    Depicted as the \textcolor{ChromeYellow}{yellow} paths are vertical and horizontal paths of a $3$-mesh $M'\subseteq M$ such that $M'$ is homogeneous.}
    \label{fig:HomogeneousMesh}
\end{figure}

Let $q$ be a non-negative integer and $(G,\chi)$ be a $q$-colorful graph.
Let $r\geq 3$ be an integer and $M$ be an $r$-mesh in $G$.
We say that $M$ is \emph{homogeneous} in $(G,\chi)$ if for every cycle $C\subseteq M$ it holds that
\begin{align*}
 \chi( \mathsf{int} (C) ) = \chi( \mathsf{compass} (M) ).
\end{align*}
See \zcref{fig:HomogeneousMesh} for an illustration of a mesh that is not homogeneous but contains another mesh as a subgraph that is homogeneous for a subset of the present colours.

The fact that homogeneity as defined here is equivalent to homogeneity as defined in the introduction, where we solely discussed this for walls, is not hard to see.

\subsection{Sorting, packing, padding, and trimming strips}\label{subsec:PackingStrips}
We begin by proving a combinatorial lemma about how any mesh admits a collection of disjoint ``parts'' -- one may think of these parts as horizontal or vertical slices -- ensuring that each colour that appears within one of those parts will appear in many of them.

In addition, these parts will also be required to provide enough infrastructure to find our desired homogeneous mesh within.
This causes an additional problem as we now have to make sure that the colours do not appear \textsl{exclusively} within the subparts reserved for providing this infrastructure.
For this reason we will have to update our parts and their reserved infrastructure on the fly.

The definitions below provide a formalism to manage the data structure sketched above consisting of parts and their subparts with the reserved infrastructure within.

\paragraph{Strips in a mesh.}
Let $M=P_1\cup\cdots\cup P_n\cup Q_1\cup \cdots \cup Q_m$  be an $(n \times m)$-mesh.
Let $i\leq j \in [n]$.
The \emph{$(i,j)$-row} is the set $\{ P_h \colon h\in[i,j] \}$.
Similarly, for $i\leq j \in [m]$, the \emph{$(i,j)$-column} is the set $\{ Q_h \colon h\in[i,j] \}$.

Given an $(i,j)$-row of $M$ where $i<j$, its \emph{boundary} is defined as $Q_1^{i,j} \cup P_i \cup Q_m^{i,j} \cup P_j$, where $Q_x^{i,j}$ is the subpath of $Q_x$ between the endpoints of $P_i$ and $P_j$.
Analogously, given an $(i,j)$-column of $M$, its \emph{boundary} is defined as $P_1^{i,j} \cup Q_i \cup P_n^{i,j} \cup Q_j$ where $P_x^{i,j}$ is the subpath of $P_x$ between the endpoints of $Q_i$ and $Q_j$.

We say that a set $\mathcal{P} \subseteq \{ P_1,\dots,P_n\}$ is a \emph{row} if there are $i\leq j\in[n]$ with $j-i+1\geq 2$ such that $\mathcal{P}$ is the $(i,j)$-row of $M$.
Similarly, $\mathcal{Q}\subseteq \{ Q_1,\dots,Q_m\}$ is a \emph{column} if there are $i\leq j\in[m]$ with $j-i+1\geq 2$ such that $\mathcal{Q}$ is the $(i,j)$-column of $M$.
The notion of boundary for rows and columns is now straightforward from the notion of boundary for $(i,j)$-rows and $(i,j)$-columns.

\paragraph{Strips and frames.}
Now let $M$ be a flat $(n \times m)$-mesh in a graph $G$ witnessed by the rendition $\rho$.
Let $\mathcal{F}$ be a row or column of $M$.
The \emph{strip} of $\mathcal{F}$ is the compass of its boundary w.\@r.\@t.\@ $\rho$.
When given only a strip $S$ of $M$, we refer to its \emph{frame} as the column or row $\mathcal{F}$ of $M$ such that $S$ is the strip of $\mathcal{F}$.
See \zcref{fig:StripAndFrame} for an illustration.
We say that a strip $S$ is \emph{of type \texttt{X}} if $\texttt{X} \in \{\text{column}, \text{row} \}$ and its frame is a \texttt{X}.

\begin{figure}[ht]
    \centering
    \begin{tikzpicture}

        \pgfdeclarelayer{background}
		\pgfdeclarelayer{foreground}
			
		\pgfsetlayers{background,main,foreground}

        \begin{pgfonlayer}{background}
            \pgftext{\includegraphics[width=6cm]{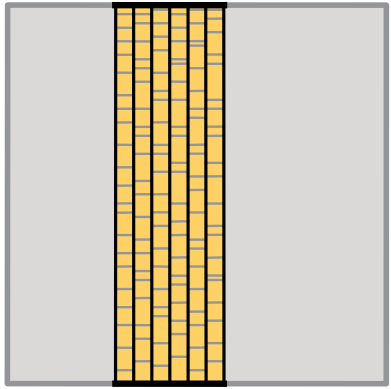}} at (C.center);
        \end{pgfonlayer}{background}
			
        \begin{pgfonlayer}{main}
        \node (C) [v:ghost] {};
            
        \end{pgfonlayer}{main}
        
        \begin{pgfonlayer}{foreground}
        \end{pgfonlayer}{foreground}

    \end{tikzpicture}
    \caption{An illustration of a strip $S$ of breadth $7$ in a mesh. The frame $
    \mathcal{F}$ of $S$ is formed by the $7$ vertical paths indicated in black. The \textcolor{ChromeYellow}{yellow} area indicates the compass of $\mathcal{F}$, that is the strip $S$.}
    \label{fig:StripAndFrame}
\end{figure}

The \emph{breadth} of a strip $S$ is the size of its frame.
That is, if $S$ is a strip of breadth $k$ and $\mathcal{F}$ is the frame of $S$, then $F$ consists of $k$ paths.

\paragraph{Strip packings.}
We say that two strips are \emph{disjoint} if their frames are vertex disjoint.

Any two disjoint strips must be of the same type.
That is, if $S_1$ and $S_2$ are disjoint strips, then their frames are either both rows or both columns.

A \emph{packing} of strips is a set $\mathcal{S}$ of pairwise disjoint strips.
Moreover, a \emph{packing of breadth $k$} is a packing $\mathcal{S}$ of strips such that every strip in $\mathcal{S}$ has breadth $k$.
\medskip

So far, we have introduced what we vaguely called the ``parts'' we wish to divide our mesh into: strip packings.
However, we have not yet given a formalisation for what we mean by the ``subparts for the reserved infrastructure''.
We discuss this below.

\paragraph{Padded strips.}
Let $k$ and $b$ be non-negative integers.
Let $S$ be a strip of breadth $2k+b$ with frame $\mathcal{F} = \{ F_i,\dots,F_{i+2k+b-1}\}$.
The \emph{$k$-padding} of $S$ are the paths $\mathsf{Pad}_k(\mathcal{F}) = \{F_i,\dots,F_{i+k-1},F_{i+k+b},\dots,F_{i+2k+b-1} \}$.
We refer to the paths $\{ F_i,\dots,F_{i+k-1}\}$ as the \emph{left buffer} and the paths $\{ F_{i+k+b},\dots,F_{i+2k+b-1}\}$ as the \emph{right buffer} of $\mathsf{Pad}_k(\mathcal{F})$.
The strip of the right buffer of $\mathsf{Pad}_k(\mathcal{F})$ is called the \emph{right $k$-buffer} of $S$ and the strip of the left buffer of $\mathsf{Pad}_k(\mathcal{F})$ is called the \emph{left $k$-buffer} of $S$.
The strip of $\{ F_{i+k},\dots,F_{i+k+b-1}\}$ is called the \emph{$k$-core} -- or \emph{core} if $k$ is understood from the context -- of $S$.
See \zcref{fig:PaddedStrips} for an illustration.

\begin{figure}[ht]
    \centering
    \begin{tikzpicture}

        \pgfdeclarelayer{background}
		\pgfdeclarelayer{foreground}
			
		\pgfsetlayers{background,main,foreground}

        \begin{pgfonlayer}{background}
            \pgftext{\includegraphics[width=8cm]{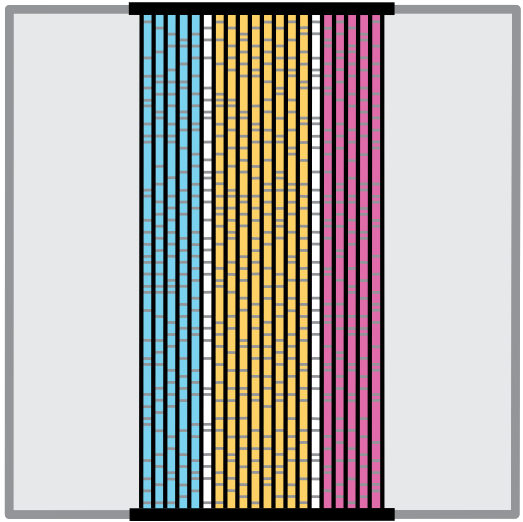}} at (C.center);
        \end{pgfonlayer}{background}
			
        \begin{pgfonlayer}{main}
        \node (C) [v:ghost] {};
            
        \end{pgfonlayer}{main}
        
        \begin{pgfonlayer}{foreground}
        \end{pgfonlayer}{foreground}

    \end{tikzpicture}
    \caption{A strip $S$ (the union of the \textcolor{CornflowerBlue}{blue}, the \textcolor{ChromeYellow}{yellow}, and the \textcolor{BrilliantRose}{magenta} are together with the two narrow gaps between them) with its frame $\mathcal{F}$ (the black horizontal paths). The $6$-core of $S$ is depicted in \textcolor{ChromeYellow}{yellow}. Moreover, the left buffer of $\mathsf{Pad}_k(\mathcal{F})$ is depicted in \textcolor{CornflowerBlue}{blue} while the right buffer is depicted in \textcolor{BrilliantRose}{magenta}.}
    \label{fig:PaddedStrips}
\end{figure}

We say that a packing $\mathcal{S}$ of strips is a \emph{$(k,b)$-padded packing} if it is a packing of breadth $\geq 2k+b$ where we implicitly assume that each strip $S\in \mathcal{S}$ with frame $\mathcal{F}$ comes divided into three strips, namely $L_S,C_S,R_S$ where $L_S$ is the strip of the left buffer of $\mathsf{Pad}_k(\mathcal{F})$, $C_S$ is the $k$-core, and $R_S$ is the strip of the right buffer of $\mathsf{Pad}_k(\mathcal{F})$.
\smallskip

Please note that the notions ``left'' and ``right'' here are chosen almost arbitrarily in order to give some form of orientation that mostly aligns with our figures.
Technically, ``left'' and ``right'' only make sense for columns and even here one might prefer to refer to them as the ``small'' and the ``big'' buffer determined by the values of their indices.
However, small and big might be misunderstood in other ways and a simple rotation of $M$ can always ensure that the notions of left and right make sense.
\smallskip

Lastly, we require a smooth update process that allows us to shed a fixed amount of the padding from all strips in a $(k,b)$-padded packing.
This might become necessary for the following reason:
Initially, we will divide the entire mesh into a large packing of broad strips.
We then make a selection of these strips such that every colour occurring in one of them occurs in many.
Finally, we fix a first padding.
By fixing this padding, however, it might happen that there exists a colour $i$ that was previously abundant and now only occurs in the core of few strips.
If this is the case we wish to remove those few strips entirely from the packing and shed off the previously selected padding from all other strips, thereby removing all occurrences of the colour $i$ within the remaining and trimmed strips.
Eventually, this process will need to come to a halt since we assume that the set of colours is bounded in size.

The following formalises this trimming process.

\paragraph{Trimming a padded packing.}
Let $k$ and $b\geq 1$ be non-negative integers.

Let $S$ be a strip of breadth $2k+b$ with frame $\mathcal{D}$ together with a $k$-padding $\mathsf{Pad}_k(\mathcal{F})$.
We say that a strip $S'$ is a \emph{$k$-trimming} of $S$ if the frame $\mathcal{F}'$ of $S'$ is equal to $\mathcal{F} \setminus \mathsf{Pad}_k(\mathcal{F})$.

Similarly, for a $(k,b)$-padded packing $\mathcal{S}$ of strips, the \emph{$k$-trimming} of $\mathcal{S}$ is the packing $\mathcal{S}'$ of breadth $b$ consisting precisely of the $k$-trimmings of the strips from $\mathcal{S}$.
\smallskip

If $S'$ is the $k$-trimming of a strip $S$ of breadth $2k+b$, then $S'$ is the $k$-core of $S$.
\medskip

We now have everything in place to state and prove the main lemma of this subsection.

\begin{lemma}\label{lemma:StripRepresentation}
Let $d$, $r$, $p$, $b$, $t\geq 2$, $x\geq 1$, and $q$ be non-negative integers and let $\texttt{X} \in \{\text{column},\text{row} \}$.
Let $(G,\chi)$ be a $q$-colorful graph, $d\geq ( q(r-1) + x ) \cdot ( 2(q+1)p + b )$ and $M$ be a $(d \times t)$-mesh in $G$ if $\texttt{X}=\text{column}$ and a $(t \times d)$ otherwise such that $M$ is flat witnessed by the rendition $\rho$.

Then there exists a, possibly empty, set $I\subseteq [q]$ and a $(p,b)$-padded packing $\mathcal{S}$ of strips such that
\begin{enumerate}
    \item $|\mathcal{S}| \geq |I|(r-1)+ x $,
    \item the breadth of each strip of $\mathcal{S}$ is at least $2(|I|+1)p + b$,
    \item for every $S\in \mathcal{S}$ it holds that $\chi(S) \subseteq I$,
    \item for every $i\in I$ there exist at least $r$ strips $S \in \mathcal{S}$ such that $i\in\chi (C_S)$ where $C_S$ denotes the $p$-core of $S$, and
    \item every strip in $\mathcal{S}$ is of type \texttt{X}.
\end{enumerate}
Moreover, there exists an algorithm that takes $p$, $b$, $r$, $(G,\chi)$, $M$, and $\rho$ as input and finds $\mathcal{S}$ in time $\mathbf{poly}(d)|E(G)|$.
\end{lemma}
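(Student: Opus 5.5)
We prove the lemma via the iterative sort-and-trim procedure sketched in Step~1 of the overview, maintaining an invariant that bounds the total loss in strips and breadth. Set $B_j \coloneqq 2(q+1-j)p + b$ and $N_j \coloneqq (q-j)(r-1)+x$. First cut the $d$ paths of type \texttt{X} into $\lfloor d/B_0\rfloor \geq q(r-1)+x$ consecutive, pairwise disjoint frames of breadth $B_0$. Their strips form a packing $\mathcal{A}_0$. Since $M$ is flat, any two such strips with disjoint frames intersect only in vertices of their boundaries, which lie in the paths of the frames. Let $I_0\coloneqq[q]$.

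\textbf{Iteration.} Suppose we have a colour set $I_j\subseteq[q]$ with $|I_j|\leq q-j$ and a packing $\mathcal{A}_j$ of strips of type \texttt{X} such that
\begin{itemize}
\item every strip has breadth at least $B_j$,
\item $\chi(S)\subseteq I_j$ for all $S\in\mathcal{A}_j$, and
\item $|\mathcal{A}_j|\geq |I_j|(r-1)+x$.
\end{itemize}
We do not fix $N_j$ exactly; we only use the inequality $|\mathcal{A}_j|\geq |I_j|(r-1)+x$.

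Give each strip in $\mathcal{A}_j$ its $p$-padding; since $B_j\geq 2p+b$, this makes $\mathcal{A}_j$ a $(p,b)$-padded packing. Let $J\subseteq I_j$ be the set of colours $i$ that occur in $\chi(C_S)$ for at most $r-1$ strips $S\in\mathcal{A}_j$.

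If $J=\emptyset$, stop and output $\mathcal{S}\coloneqq\mathcal{A}_j$ and $I\coloneqq I_j$. Conditions (i), (iii), (iv) and (v) hold directly. For (ii), $B_j\geq 2(|I|+1)p+b$ because $|I|\leq q-j$.

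If $J\neq\emptyset$, discard every strip whose core carries a colour of $J$; this removes at most $|J|(r-1)$ strips. Replace each remaining strip by its $p$-trimming, i.e.\ its core, which has breadth at least $B_j-2p=B_{j+1}$. Set $I_{j+1}\coloneqq I_j\setminus J$ and let $\mathcal{A}_{j+1}$ be the set of trimmed strips. Then $\chi(S')\subseteq I_{j+1}$ for every trimmed strip $S'$, since its core avoided $J$. Moreover,
\[
|\mathcal{A}_{j+1}|\geq |I_j|(r-1)+x-|J|(r-1)=|I_{j+1}|(r-1)+x.
\]
The size of $I_j$ strictly decreases, so there are at most $q$ rounds, and the breadths stay at least $2p+b$ throughout.

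\textbf{Main obstacle.} The delicate point is that the colour sets are monotone under restriction: we need $\chi(\text{core of }S)\subseteq\chi(S)$. This holds because, in the flat rendition $\rho$, the compass of the core's boundary is a subgraph of the compass of $S$'s boundary. That containment in turn follows from the boundary cycles being nested, with the core's trace bounding a disk inside the disk of $S$ in the single-vortex-free disk $\Delta$ guaranteed by flatness.

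We also need the disjointness of strips in the paper's sense: their frames are vertex-disjoint, which is preserved under trimming. Finally, the colour sets $\chi(S)$ are taken over compasses, which may include bridges attaching to $S$'s boundary. This is why we only claim $\chi(S)\subseteq I$ and not anything about colours outside $S$.

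\textbf{Algorithm.} Each round computes the compasses of at most $d$ boundary cycles. Each compass can be found by a graph search in $\mathcal{O}(|E(G)|)$ time using $\rho$. With at most $q\leq d$ rounds, the total running time is $\mathbf{poly}(d)|E(G)|$.
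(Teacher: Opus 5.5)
Your proposal is correct and is essentially the paper's \textbf{Sort}/\textbf{Trim} procedure: you pad the strips, discard the few whose $p$-cores carry a deficient colour, trim to the cores, and repeat, with the same bookkeeping $|\mathcal{S}|\geq |I|(r-1)+x$ and breadth at least $2(|I|+1)p+b$. The differences are only cosmetic: you drop the paper's whole-strip \textbf{Sort} phase, which is subsumed by the core test anyway (a colour lying in at most $r-1$ strips lies in at most $r-1$ cores), you remove all deficient colours in one batch per round, and you make explicit the monotonicity $\chi(C_S)\subseteq\chi(S)$ that the paper uses tacitly.
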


\begin{proof}
Without loss of generality, we may assume $\texttt{X}=\text{column}$ as otherwise switching the roles of columns and rows in $M$ transforms the problem into the desired form.

Let us begin with the very first packing of strips.
With $d \geq ( q(r-1) + x ) \cdot ( 2(q+1)p + b )$ we may partition the first $( q(r-1) + x ) \cdot ( 2(q+1)p + b )$ vertical paths of $M$ into a total of $q(r-1) + x$ columns, each of breadth $2(q+1)p + b$.
This results in a packing $\mathcal{S}_0$ of breadth $2(q+1)p + b$ of $q(r-1) + x$ strips of type column.
Let $I_0 \coloneqq [q]$.

Our algorithm proceeds by executing the following subroutines.
Each step takes as input three \emph{parameters}, that is the number $q$ of colours, the set $I_0'$ of all $q'$ available colours, and a packing $\mathcal{S}_0'$ containing at least $q'(r-1) + x$ strips, each of breadth at least $2(q'+1)p + b$ such that $\chi(S) \subseteq I_0'$ for each $S\in\mathcal{S}_0'$.
\textbf{Trim} has the additional requirement that for every $j\in I_0'$ there are at least $r$ strips $S$ in $\mathcal{S}_0'$ with $j\in\chi(S)$.
This requirement is not necessary for \textbf{Sort}.
\medskip

We start our procedure by calling \textbf{Sort} and in this first call, we enter the input $I_0' \coloneqq I_0$, $q' \coloneqq q$, and $\mathcal{S}_0' \coloneqq \mathcal{S}_0$.
\medskip

\textbf{Sort:}
The input are a set $I_0'$ of $q'$ available colours and a packing $\mathcal{S}_0'$ of at least $q'(r-1) + x$ strips, each of breadth at least $2(q'+1)p + b$ such that $\chi(S) \subseteq I_0'$ for each $S\in\mathcal{S}_0'$.
\smallskip

If for every $i\in I_0'$ there are at least $r$ strips $S\in\mathcal{S}_0'$ such that $i\in\chi(S)$ we immediately proceed to \textbf{Trim} with the same input.
\smallskip

Otherwise, suppose there exists $j\in I_0'$ such that there are at most $r-1$ strips $S$ in $\mathcal{S}_0'$ with $j\in \chi(S)$ and let $\mathcal{Z}_1$ be the collection of all such strips.
Pick the smallest such $j$ and set
\begin{align*}
    I_1 & \coloneqq I_0' \setminus \{ j\}, \text{ and}\\
    \mathcal{S}_1 & \coloneqq \mathcal{S}_0' \setminus \mathcal{Z}_1.
\end{align*}
We now have that
\begin{align*}
    |I_1| & = q'-1\text{ and}\\
    |\mathcal{S}_1| & \geq q'(r-1) + x - (r-1)\\
    & \geq (q'-1)(r-1) +x\\
    & = |I_1|(r-1) +x.
\end{align*}
Moreover, there now does not exist a strip $S\in\mathcal{S}_1$ such that $j\in\chi(S)$.

By a straightforward induction, we may repeat this process up to $q'-1$ additional times until we reach some integer $i\in [q']$ together with a set $I_i \subseteq I_1$ where $|I_i|=q'-i$ and a packing $\mathcal{S}_i \subseteq \mathcal{S}_1$ of breadth $2(q'+1)p + b$ of at least $(q'-i)(r-1) + x = |I_i|(r-1) + x$ strips such that
\begin{enumerate}
    \item for every $j\in I_i$ there are at least $r$ strips $S\in\mathcal{S}_i$ such that $j\in\chi(S)$, and
    \item $\chi(S) \subseteq I_i$ for every $S\in\mathcal{S}_i$.
\end{enumerate}

If $i < q'$ we may now proceed to \textbf{Trim} with parameters $q' \coloneqq |I_i| = q'-i$, $I'_0 \coloneqq I_i$, and $\mathcal{S}'_0 \coloneqq \mathcal{S}_i$.
\smallskip

In case $i=q'$ we have that $I_i = \emptyset$.
This means that $\chi(S) = \emptyset$ for all $S \in \mathcal{S}_i$.
In this case we may select an arbitrary subsets of $x$ strips $\mathcal{S}^\star \subseteq \mathcal{S}_i$, since we have $|\mathcal{S}_i| \geq |I_i|(r-1) + x$ and set $\mathcal{S} \coloneqq \mathcal{S}^\star$.
Since each strip in $\mathcal{S}$ is of breadth $2(q'+1)p + b$, we may treat $\mathcal{S}$ as a $(p,b)$-padded packing of size one and since the strips in $\mathcal{S}$ carry no colour, for each strip $S \in \mathcal{S}$ both $p$-buffers carry the same set of colours as the core of $S$ which is equal to $\chi(S) = \emptyset$.
Hence, $\mathcal{S}$ satisfies all requirements of our lemma and we are done in this case by \textbf{returning} $\mathcal{S}$.
\medskip

We now proceed to \textbf{Trim}.
This step may send us back to \textbf{Sort} with reduced parameters.
\medskip

\textbf{Trim:}
The input are a set $I_0'$ of $q'$ available colours and a packing $\mathcal{S}_0'$ of at least $q'(r-1) + 1$ strips, each of breadth at least $2(q'+1)p + b$ such that $\chi(S) \subseteq I_0'$ for each $S\in\mathcal{S}_0'$.
Additionally, we require for the input that for every $j\in I_0'$ there are at least $r$ strips $S$ in $\mathcal{S}_0'$ with $j\in\chi(S)$.
\smallskip

Let us now consider $\mathcal{S}_0'$ as a $(p,b)$-padded packing.
That is, each $S\in\mathcal{S}_0'$ is now divided into the strips $L_S$ and $R_S$ which are the left and the right buffer of $\mathsf{Pad}_p(\mathcal{F}_S)$ where $\mathcal{F}_S$ is the frame of $S$, and its $p$-core $C_S$.
\smallskip

If for every $j\in I_0'$ there exist at least $r$ strips $S\in\mathcal{S}_0$ such that $j\in\chi(C_S)$, we have now reached the desired outcome of the lemma and may stop by returning $\mathcal{S} \coloneqq \mathcal{S}_0'$.
\smallskip

Otherwise let us select $j\in I_0'$ to be the smallest colour such that there are at most $r-1$ strips $S\in\mathcal{S}_0'$ with $j\in \chi(S)$.
Let $\mathcal{Z}\subseteq \mathcal{S}_0$ be the collection of all $S\in\mathcal{S}_0'$ such that $j\in \chi(C_S)$.
We set
\begin{align*}
    \mathcal{S}_1 & \coloneqq \mathcal{S}_0'\setminus \mathcal{Z}\text{ and}\\
    I_1 & \coloneqq I_0' \setminus \{ j\}.
\end{align*}
It follows that $|\mathcal{S}_1| \geq (q'-1)(r-1) + x$ and so far, every strip in $\mathcal{S}_1$ has breadth at least $2(q'+1)p + b$.

Now let $\mathcal{S}_2$ be the $p$-trimming of $\mathcal{S}_1$.
Still we have that $|\mathcal{S}_2| \geq (q'-1)(r-1) + x$ but the breadth of the strips in $\mathcal{S}_2$ might have dropped.
However, we have deleted precisely $2p$ paths from the frame of each strip in $\mathcal{S}_1$ and so the breadth of the strips in $\mathcal{S}_2$ is still at least $2q'p + b$.

In this state, $\mathcal{S}_2$ does not necessarily meet the requirements of \textbf{Trim}.
To check for this and, in case it fails, to adjust the input accordingly, we now call \textbf{Sort} again, this time with parameters $q' \coloneqq q'-1$, $I_0' \coloneqq I_1$, and $\mathsf{S}_0' \coloneqq \mathsf{S}_2$.
\bigskip

Every time \textbf{Trim} calls \textbf{Sort} again, the parameter $q'$ decreases by $1$.
Hence, \textbf{Trim} may called at most $q+1$ times in total where we may be sure that in the $(q+1)$st call, the set $I_0'$ is empty and therefore \textbf{Trim} will terminate the algorithm.

Since all possible end states of \textbf{Sort} are either termination or calling \textbf{Trim} this implies that our algorithm will correctly terminate and return the desired packing $\mathcal{S}$ after at most $q+1$ iterations of \textbf{Sort} and $q+1$ iterations of \textbf{Trim}.

For all executions of steps \textbf{Sort} and \textbf{Trim}, we only need to search the graph a constant number of times to check for the colours carried by each strip -- and possibly their cores -- as well as to determine the newly assigned buffer and core strips.
Since these searches can be done in linear time, this completes the proof due to $d$ being a common upper bound on all other involved quantities.
\end{proof}

\subsection{Tilings out of strips}\label{subsec:Tilings}

In \zcref{subsec:PackingStrips} we have restricted ourselves to packing \textsl{one} type of strips.
However, we need to deal with an additional difficulty.
Suppose we have already applied \zcref{lemma:StripRepresentation} and obtained a packing $\mathcal{C}$ of strips as specified in the lemma for strips of type column.
If we now run the same algorithm to obtain a packing $\mathcal{T}$ of strips of type row, we might impact the properties of our column-type strips.
That is, it might happen that for some $C\in \mathcal{C}$ and some colour $i\in[q]$, all occurrences of $i$ in $\chi(C)$ lie within the paddings -- more precisely the left and right buffers -- of the strips from $\mathcal{R}$.
As a result, this colour will no longer be available -- from the viewing point of $C$ -- outside of the reserved infrastructure.
In such a situation, however, it is clear that $i$ must also belong to the set of colours represented within the strips of $\mathcal{R}$ which will allow us to infer that this situation does not lead to us losing access to some colour by tweaking the arguments from the proof of \zcref{lemma:StripRepresentation} slightly.
However, to locate and ensure the abundance of all colours caught within the strips of $\mathcal{C}$ and $\mathcal{R}$, we need to introduce another level of data structure on top of our padded strips: \textsl{Tiles}.

Intuitively, the union of the strips from $\mathcal{C}$ and $\mathcal{R}$ may be split into four types of ``regions'',
\begin{enumerate}
    \item the \textsl{reserved infrastructure}, that is the union of all left and right buffers of all strips involved,
    \item parts of the cores of strips from $\mathcal{C}$ that are disjoint from the strips of $\mathcal{R}$,
    \item parts of the cores of strips from $\mathcal{R}$ that are disjoint from the strips of $\mathcal{C}$, and
    \item those areas where the cores of $\mathcal{C}$ and $\mathcal{R}$ coincide.
\end{enumerate}
In \zcref{fig:IntersectingStrips} we provide a sketch of such a situation.
\begin{figure}[ht]
    \centering
    \begin{tikzpicture}

        \pgfdeclarelayer{background}
		\pgfdeclarelayer{foreground}
			
		\pgfsetlayers{background,main,foreground}

        \begin{pgfonlayer}{background}
            \pgftext{\includegraphics[width=8cm]{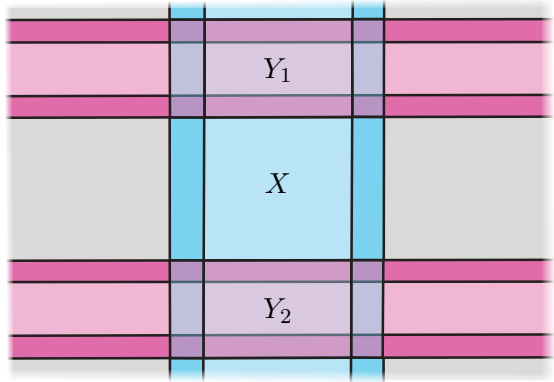}} at (C.center);
        \end{pgfonlayer}{background}
			
        \begin{pgfonlayer}{main}
        \node (C) [v:ghost] {};

        \node (X) [v:ghost,position=0:0mm from C] {$X$};

        \node (Y1) [v:ghost,position=90:15mm from C] {$Y_1$};
        \node (Y2) [v:ghost,position=270:17mm from C] {$Y_2$};

        \end{pgfonlayer}{main}
        
        \begin{pgfonlayer}{foreground}
        \end{pgfonlayer}{foreground}

    \end{tikzpicture}
    \caption{Two disjoint strips of type row (in \textcolor{BrilliantRose}{magenta}) and a single strip $C$ of type column (in \textcolor{CornflowerBlue}{blue}). The darker parts represent the padding of all strips while the lighter areas are the cores. At the centre of the sketch is a region in light \textcolor{CornflowerBlue}{blue} ($X$) enclosed by the padding of the column and the padding of both rows.
    Above and below this region we find the intersections of the core of $C$ with the cores of the two rows ($Y_1$ and $Y_2$).}
    \label{fig:IntersectingStrips}
\end{figure}
The goal of this subsection is to provide a formalism for the concept of tiles and to prove that every colour that is abundant in $\mathcal{C}$ or $\mathcal{R}$ may be forced to also be abundant amongst the tiles.

\paragraph{Tiles of a strip.}
Let $n,m \geq 2$ be integers and $\texttt{X},\texttt{Y}\in\{ \text{column},\text{row}\}$ be distinct.
Let $G$ be a graph and $M$ be an $(n \times m)$-mesh that is flat in $G$ witnessed by $\rho$.
Let $k,b$ be two non-negative integers and let $\mathcal{Z}$ be a $(k,b)$-padded packing of strips of type \texttt{X} where $\mathcal{F}_Z$ denotes the frame of $Z\in\mathcal{Z}$.
Finally, let $d,p$ be another two non-negative integers, let $S$ be a strip of type \texttt{Y} with breadth $2p+d$ and the frame $\mathcal{F}$, let $L$ be the rightmost path of the left buffer of $\mathsf{Pad}_p(\mathcal{F})$, and let $R$ be the leftmost path of the right buffer of $\mathsf{Pad}_p(\mathcal{F})$.

Every $Z\in\mathcal{Z}$ intersects $S$, indeed, it intersects every single path of $\mathcal{F}$.
Moreover, the strips in $\mathcal{Z}$ are naturally ordered as follows.
For two strips $Z_1,Z_2\in\mathcal{Z}$ we write $Z_1 \leq Z_2$ if and only if the right buffer of $Z_1$ separates $Z_2$ and the left buffer of $Z_1$ within the compass of $M$.

In other words, we assume the strips in $\mathcal{Z}$ to be ordered with respect to their occurrence in $M$ from ``left to right''.
As before the notion of ``left to right'' in this context appears to imply that the strips of $\mathcal{Z}$ are of type column, but as for buffers, by rotating $M$ we may also assume the rows to have a well-defined ``left'' and ``right''.

We say that two strips $Z_1 \neq Z_2\in\mathcal{Z}$ are \emph{consecutive} if $Z_1 \leq Z_2$ and there does not exist $Z\in\mathcal{Z}\setminus \{ Z_1,Z_2\}$ such that $Z_1 \leq Z \leq Z_2$.

Now fix $Z\in\mathcal{Z}$.
Then for $Z$ there exist four paths of $\mathsf{Pad}_{k}(\mathcal{F}_Z)$ we are interested in.
Those are
\begin{itemize}
    \item the leftmost path $L^1_Z$ of the left buffer of $\mathsf{Pad}_{k}(\mathcal{F}_Z)$,
    \item the rightmost path $L^2_Z$ of the left buffer of $\mathsf{Pad}_{k}(\mathcal{F}_Z)$,
    \item the leftmost path $R^1_Z$ of the right buffer of $\mathsf{Pad}_{k}(\mathcal{F}_Z)$, and
    \item the rightmost path $R^2_Z$ of the right buffer of $\mathsf{Pad}_{k}(\mathcal{F}_Z)$.
\end{itemize}

For a fixed $Z\in\mathcal{Z}$ let $D_{Z,S}$ be the unique cycle in $L \cup R \cup L^2_Z \cup R^1_Z$.
The \emph{$(Z,k)$-tile} of $S$ is the graph $\mathsf{int}(D_{Z,S})$.
\smallskip

For two consecutive $Z_1,Z_2 \in\mathcal{Z}$ let $D_{Z_1,Z_2,S}$ be the unique cycle in $L \cup R \cup R^2_{Z_1} \cup L^1_{Z_2}$.
The \emph{$(Z_1,Z_2,k)$-tile} of $S$ is the graph $\mathsf{int}(D_{Z_1,Z_2,S})$.
See \zcref{fig:Tiles} for an illustration.
\smallskip

\begin{figure}[ht]
    \centering
    \begin{tikzpicture}

        \pgfdeclarelayer{background}
		\pgfdeclarelayer{foreground}
			
		\pgfsetlayers{background,main,foreground}

        \begin{pgfonlayer}{background}
            \pgftext{\includegraphics[width=12cm]{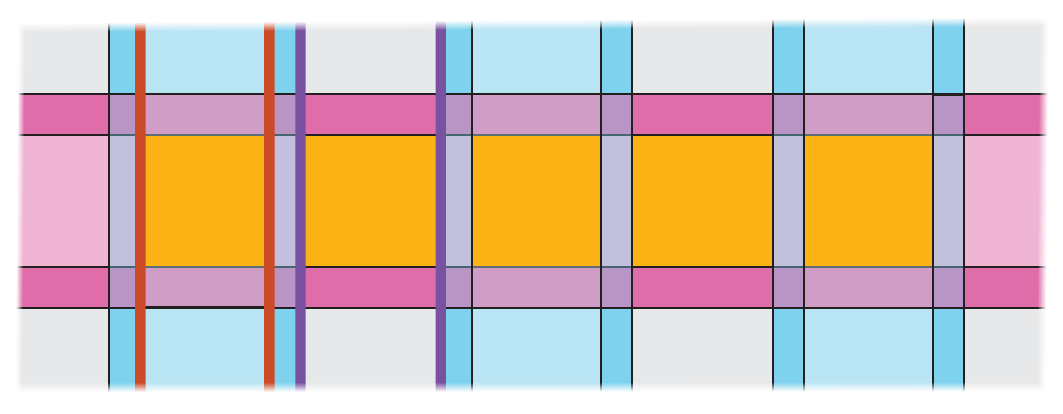}} at (C.center);
        \end{pgfonlayer}{background}
			
        \begin{pgfonlayer}{main}
        \node (C) [v:ghost] {};

        \node (T1) [v:ghost,position=180:36mm from C] {$T_1$};
        \node (T2) [v:ghost,position=180:18mm from C] {$T_2$};
        \node (T3) [v:ghost,position=0:0mm from C] {$T_3$};
        \node (T4) [v:ghost,position=0:19mm from C] {$T_4$};
        \node (T5) [v:ghost,position=0:38mm from C] {$T_5$};
            
        \end{pgfonlayer}{main}
        
        \begin{pgfonlayer}{foreground}
        \end{pgfonlayer}{foreground}

    \end{tikzpicture}
    \caption{The set $\mathsf{Tiles}_{\mathcal{C}}(R)$ (depicted in \textcolor{ChromeYellow}{yellow}) for a fixed row R and with respect to a packing of type column (depicted in \textcolor{CornflowerBlue}{blue}).
    The tiles $T_1$, $T_3$, and $T_5$ individually sit between the two innermost paths of the padding of some $C \in\mathcal{C}$ (for $T_1$ these paths are depicted in dark \textcolor{Flame}{orange}).
    The tiles $T_1$ and $T_2$ are of the form $D_{Z_i,Z_{i+1},R}$ for some consecutive $Z_i,Z_{i+1}\in \mathcal{C}$.
    Both each of them is bordered on the left by the rightmost path of the padding of $Z_i$ and on the right by the leftmost path of the padding of $Z_{i+1}$ (for $T_2$ these paths are depicted in \textcolor{Amethyst}{purple}).}
    \label{fig:Tiles}
\end{figure}

Finally, the set of all \emph{$(\mathcal{Z},k)$-tiles} of $S$, denoted by $\mathsf{Tiles}_{\mathcal{Z}}^k(S)$, is the collection of all $(Z,k)$-tiles of $S$ for $Z\in\mathcal{Z}$ and all $(Z_1,Z_2,k)$ tiles of $S$ for consecutive $Z_1,Z_2\in\mathcal{Z}$.
\smallskip

Tiles have two important features:
\begin{enumerate}
    \item They are either entirely private to one strip, those are the $(Z_1,Z_2,k)$-tiles of $S$, or they are shared by two strips, that is, the $(Z,k)$-tile of $S$ is also the $(S,p)$-tile of $Z$.
    \item Each tile is \textsl{entirely surrounded} by the union of the padding of $S$ and the paddings of either $Z_1$ and $Z_2$ or the padding of $Z$.
\end{enumerate}
The second feature in particular is important for our final goal to construct a homogeneous wall out of the infrastructure provided by the padding.
In turn, the $p$-core $C_S$ of the strip $S$ is naturally partitioned into three types of areas:
\begin{itemize}
    \item the intersection of $C_S$ with the paddings of the strips from $\mathcal{Z}$,
    \item the tiles, and
    \item (up to) two additional ``tiles'' where $C_S$ meets the perimeter of $M$.
\end{itemize}
See \zcref{fig:EndTiles} for an illustration.

\begin{figure}[ht]
    \centering
    \begin{tikzpicture}

        \pgfdeclarelayer{background}
		\pgfdeclarelayer{foreground}
			
		\pgfsetlayers{background,main,foreground}

        \begin{pgfonlayer}{background}
            \pgftext{\includegraphics[width=7cm]{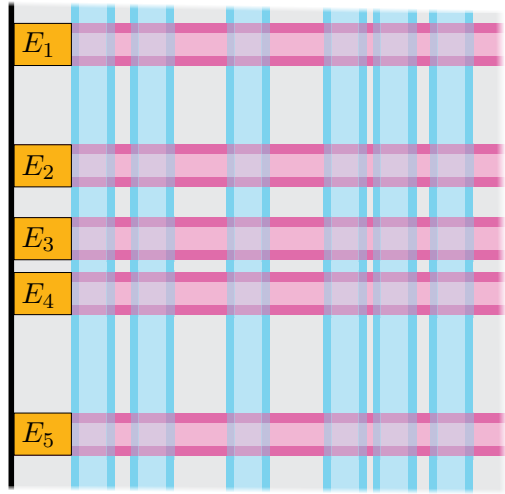}} at (C.center);
        \end{pgfonlayer}{background}
			
        \begin{pgfonlayer}{main}
        \node (C) [v:ghost] {};
        \node (Top) [v:ghost,position=90:28mm from C] {};
        \node (L) [v:ghost,position=180:30mm from Top] {};

        \node (E1) [v:ghost,position=0:0mm from L] {$E_1$};
        \node (E2) [v:ghost,position=270:16.3mm from E1] {$E_2$};
        \node (E3) [v:ghost,position=270:9.6mm from E2] {$E_3$};
        \node (E4) [v:ghost,position=270:7.5mm from E3] {$E_4$};
        \node (E5) [v:ghost,position=270:18.5mm from E4] {$E_5$};
            
        \end{pgfonlayer}{main}
        
        \begin{pgfonlayer}{foreground}
        \end{pgfonlayer}{foreground}

    \end{tikzpicture}
    \caption{An illustration of part of a flat mesh together with a packing of columns and a packing of rows. The \textcolor{ChromeYellow}{yellow} regions $E_1,\dots,E_5$ are ``end-tiles'', i.\@e.\@ regions contained in the cores of some of the rows -- in this example -- that are not completely surrounded by paddings and therefore not considered tiles.}
    \label{fig:EndTiles}
\end{figure}

These last to ``end-tiles'' are \textsl{not} surrounded by the infrastructure provided by the padding.
This might be dangerous because it could happen that all colours carried by the core of $S$ are accumulated in -- or close to -- the perimeter of $M$ which will make it hard to pick up these colours later on.
So we will need to crop these pieces and make sure that the entire core of $S$ is partitioned into tiles and padding.
As with the trimming operation, this might however lead to the loss of another colour.
Moreover, to deal with this, we are also required to include another round of trimmings as the colours carried by $S$ might all be \textsl{everywhere} in the padding and at the perimeter of $M$ within the core.
To ensure that this does not result in an expensive loop, our goal is to envelope the algorithm from \zcref{lemma:StripRepresentation} in another algorithm, bringing two additions to the table:
\begin{enumerate}
    \item We now create two strip packings simultaneously, one for the columns and the other for the rows, and
    \item we include one additional subroutine that ``crops'' $M$ towards the boundaries defined by the two packings and then scans for newly occurred losses of colours.
\end{enumerate}
Before we jump into the new lemma, let us define the ``cropping'' operation mentioned above.

\paragraph{Cropping meshes and strips.}
Let $M$ be a flat mesh in a graph $G$ witnessed by $\rho$ and let $\mathcal{C}$ be a $(k,b)$-padded packing of strips of type column as well as $\mathcal{R}$ be a $(d,p)$-padded packing of strips of type row.

For any $\mathcal{X}\in \{ \mathcal{C},\mathcal{R}\}$ we define $\mathsf{Crop}(M,\mathcal{X})$ to denote the smallest submesh of $M$ fully containing all paths of the frames of the strips in $\mathcal{X}$.
Moreover, for $\mathcal{Y}\in \{ \mathcal{C},\mathcal{R}\}$ we denote by $\mathsf{Crop}(\mathcal{Y},\mathcal{X})$ the packing of strips of $\mathsf{Crop}(M,\mathcal{X})$ that is the collection of all strips defined by restricting the frame $\mathcal{F}_Y$ of each $Y\in\mathcal{Y}$ to the mesh $\mathsf{Crop}(M,\mathcal{X})$.

We have $M' \coloneqq \mathsf{Crop}(\mathsf{Crop}(M,\mathcal{C}),\mathcal{R}) = \mathsf{Crop}(\mathsf{Crop}(M,\mathcal{R}),\mathcal{C})$.
Moreover, the perimeter of $M'$ consists entirely of subpaths of the outer-most paths of the paddings of $\mathcal{C}$ and $\mathcal{R}$.
That is, in $M'$ we have indeed gotten rid of those additional ``end-tiles'' mentioned above.
See \zcref{fig:Cropping} for an illustration.

\begin{figure}[ht]
    \centering
    \begin{tikzpicture}

        \pgfdeclarelayer{background}
		\pgfdeclarelayer{foreground}
			
		\pgfsetlayers{background,main,foreground}

        \begin{pgfonlayer}{background}
            \pgftext{\includegraphics[width=6cm]{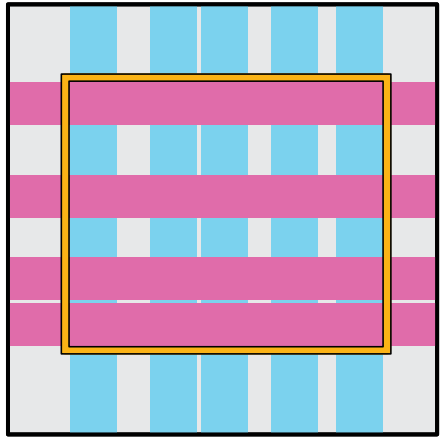}} at (C.center);
        \end{pgfonlayer}{background}
			
        \begin{pgfonlayer}{main}
        \node (C) [v:ghost] {};
            
        \end{pgfonlayer}{main}
        
        \begin{pgfonlayer}{foreground}
        \end{pgfonlayer}{foreground}

    \end{tikzpicture}
    \caption{A flat mesh $M$ together with a packing $\mathcal{C}$ of strips of type column and a packing $\mathcal{R}$ of strips of type row.
    The submesh $M'$ whose perimeter is indicated in \textcolor{ChromeYellow}{yellow} is precisely the mesh $\mathsf{Crop}(\mathsf{Crop}(M,\mathcal{C}),\mathcal{R})$ obtained by cropping $M$ along both packings.}
    \label{fig:Cropping}
\end{figure}

In some situations, we would also like to lift a strip from a submesh of $M$ to a strip of the entire mesh.
Let $M'$ be a submesh of $M$ and let $S'$ be a strip of $M'$ with frame $\mathcal{F}'$.
By $\mathsf{Lift}(M,S)$ we denote the strip $S$ of $M$ with frame $\mathcal{F}$ where $F\in \mathcal{F}$ if and only if there exists $F'\in \mathcal{F}'$ such that $F' \subseteq F$, and $S$ and $S'$ are of the same type.
Similarly, if $\mathcal{S}$ is a packing of strips of some submesh of $M$, we denote by $\mathsf{Lift}(M,\mathcal{S})$ the set $\{ \mathsf{Lift}(M,S) \colon S\in\mathcal{S} \}$.

The first issue we have to deal with is that by cropping we might lose access to additional colours.
This is because, while an application of \zcref{lemma:StripRepresentation} allows us to ensure that every colour carried by some strips also appears in the cores of many strips, it does not provide more information on the precise location of this colour.
Indeed, it could be that the majority of vertices carrying a fixed colour appears in the ``end-tiles'' which would cropped and therefore forgotten.
There is a second danger we need to take care off.
That is, even if the problem above can be handled, it could still happen that most occurrences of colour $i$ in the cores of, say, the rows are within the left buffer of the leftmost column and the right buffer of the rightmost column.
This case is \textsl{almost} the same situation as the one with the ``end-tiles'' and can be taken care off in essentially the same way.

Let $M$ be a flat mesh with a $(p,b)$-padded packing $\mathcal{C}$ of columns and a $(k,h)$-padded packing $\mathcal{R}$ of rows such that $M = \mathsf{Crop}(\mathsf{Crop}(M,\mathcal{C}),\mathcal{R})$.
Then let $B_{\mathcal{C}}$ be the union of the leftmost strip of $\mathcal{C}$ together with the rightmost strip of $\mathcal{C}$.
Similarly let $B_{\mathcal{R}}$ be the union of the leftmost strip in $\mathcal{R}$ and the rightmost strip in $\mathcal{R}$.
We call $B= B_{\mathcal{R}} \cup B_{\mathcal{C}}$ the \emph{boundary} of $\mathcal{R}$ and $\mathcal{C}$.

To account for the difficulties mentioned above, we begin with a strengthening of \zcref{lemma:StripRepresentation}.

\begin{lemma}\label{lemma:Cropping}
Let $d$, $r$, $p$, $b$, and $q$ be non-negative integers.
Let $(G,\chi)$ be a $q$-colorful graph, $d\geq ( q(r+1) + 2q + 1 ) \cdot ( 2(q+1)p + b )$ and $M$ be a $(d \times d)$-mesh in $G$ such that $M$ is flat witnessed by the rendition $\rho$.

Then there exist possibly empty sets $I_{\mathcal{C}},I_{\mathcal{R}}\subseteq [q]$ and two $(p,b)$-padded packings $\mathcal{C}$ of type column and $\mathcal{R}$ of type row such that if $\widehat{M}\coloneqq \mathsf{Crop}(\mathsf{Crop}(M,\mathcal{C}),\mathcal{R})$, $\widehat{\mathcal{C}} \coloneqq \mathsf{Crop}(\mathcal{R},\mathcal{C})$, $\widehat{\mathcal{R}} \coloneqq \mathsf{Crop}(\mathcal{C},\mathcal{R})$, and $B$ is the boundary of $\widehat{\mathcal{R}}$ and $\widehat{\mathcal{C}}$, then the following hold
\begin{enumerate}
    \item $|\mathcal{X}| \geq |I_{\mathcal{X}}|(r-1)+ 2|I_{\mathcal{Y}}| + 1$ for all $\mathcal{X}\neq\mathcal{Y}\in \{ \widehat{\mathcal{C}},\widehat{\mathcal{R}}\}$,
    \item for both $\mathcal{X} \in \{ \widehat{\mathcal{C}},\widehat{\mathcal{R}}\}$ and every $S\in \mathcal{X}$ it holds that $\chi(S) \subseteq I_{\mathcal{X}}$, and
    \item for each $\mathcal{X} \in \{ \widehat{\mathcal{C}},\widehat{\mathcal{R}}\}$ and every $i\in I_{\mathcal{X}}$ there exist at least $r$ strips $S \in \mathcal{X}$ such that $i\in\chi (C_S-B)$ where $C_S$ denotes the $p$-core of $S$.
\end{enumerate}
Moreover, there exists an algorithm that takes $p$, $b$, $r$, $(G,\chi)$, $M$, and $\rho$ as input and finds $\mathcal{C}$ and $\mathcal{R}$ in time $\mathbf{poly}(d)|E(G)|$.
\end{lemma}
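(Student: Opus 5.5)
We run the \textbf{Sort}/\textbf{Trim} scheme from the proof of \zcref{lemma:StripRepresentation} on columns and rows at once, and add a third subroutine \textbf{Crop}; every restart of the loop is paid for by permanently removing one colour from $I_{\mathcal{C}}$ or $I_{\mathcal{R}}$. We start by partitioning the first $(q(r+1)+2q+1)(2(q+1)p+b)$ vertical paths into $q(r+1)+2q+1$ columns of breadth $2(q+1)p+b$, and do the same for the horizontal paths. This gives initial packings $\mathcal{C}_0,\mathcal{R}_0$ together with colour sets $I_{\mathcal{C}}=I_{\mathcal{R}}=[q]$.

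The invariant we maintain is the following. With $q_{\mathcal{C}}\coloneqq|I_{\mathcal{C}}|$ and $q_{\mathcal{R}}\coloneqq|I_{\mathcal{R}}|$, the column packing has at least $q_{\mathcal{C}}(r+1)+2q_{\mathcal{R}}+1$ strips, each of breadth at least $2(q_{\mathcal{C}}+q_{\mathcal{R}}+1)p+b$ (or an analogous budget tied to the total number of removed colours), and symmetrically for the rows. We also require $\chi(S)\subseteq I_{\mathcal{X}}$ for every strip $S$ of the corresponding packing.

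\textbf{Sort} and \textbf{Trim} are applied to each packing exactly as in \zcref{lemma:StripRepresentation}. \textbf{Sort} deletes at most $r-1$ strips for each rare colour. \textbf{Trim} deletes the at most $r-1$ strips whose core carries a colour that is rare among the cores, and then $p$-trims every strip. Either step removes the offending colour from the relevant $I_{\mathcal{X}}$.

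\textbf{Crop} runs once both packings are stable. We form $\widehat M$, $\widehat{\mathcal{C}}$ and $\widehat{\mathcal{R}}$, and then check item iii) against $\chi(C_S-B)$ for the cropped strips.

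Suppose some $i\in I_{\mathcal{C}}$ fails, so fewer than $r$ cropped columns carry $i$ in $C_S-B$. Every column still carries $i$ somewhere in its core before cropping, by the result of \textbf{Trim}. So for the remaining columns the occurrences of $i$ lie only in the end-tiles removed by cropping, or in $B$.

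We discard the fewer than $r$ bad columns. We then delete the two extreme row strips on each side that make up $B_{\widehat{\mathcal{R}}}$ together with the end regions. This is why the budget carries the term $2|I_{\mathcal{Y}}|$: each failing colour of one type costs two strips of the other type. After these deletions, $i$ no longer occurs in any surviving column, and $I_{\mathcal{C}}\leftarrow I_{\mathcal{C}}\setminus\{i\}$.

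To make this step correct, the deleted extreme rows, or the outermost buffers of the extreme rows (adjusted by one trimming), must really contain all remaining occurrences of $i$ in the columns. We therefore choose the cropping step to remove the end-tile region and the boundary strips of the opposite type. After this, the new crop lies strictly inside the old one, and its columns are subgraphs of the old columns restricted away from those regions.

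After any change we return to \textbf{Sort}. Each round removes a colour, so there are at most $2q$ rounds. Across all rounds:
\begin{itemize}
\item the number of lost strips of one type is at most $q_{\mathcal{X}}(r-1)$ from sort/trim, plus $2q_{\mathcal{Y}}$ from cropping the other type's failures, plus possibly $2$ per own-colour crop failure, which is absorbed by $r+1$ in place of $r-1$;
\item the breadth of a strip drops by $2p$ per trim of its own type.
\end{itemize}
Together these give item i) at termination. Items ii) and iii) hold because they are exactly the termination conditions. Each subroutine consists of linearly many graph searches, which gives the $\mathbf{poly}(d)|E(G)|$ running time.

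The main obstacle is the bookkeeping in \textbf{Crop}. We must show that removing a constant number of strips of the other type, plus a $p$-trim, really eliminates every surviving occurrence of the failed colour in the current type. We must also check that this removal does not invalidate the guarantees already established for the other packing. Those guarantees are restored simply by rerunning \textbf{Sort}/\textbf{Trim}, which only ever decreases the colour sets, so termination is preserved.
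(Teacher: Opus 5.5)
\textbf{Gap in the \textbf{Crop} step.} Your overall scheme is the paper's. You run \textbf{Sort}/\textbf{Trim} on both packings and crop. When a colour $i$ of type $\mathcal{X}$ fails the $C_S-B$ test, you delete the two extreme strips of the other type, paid for by the $2|I_{\mathcal{Y}}|$ term. This removes one colour per round, so there are at most $2q$ rounds. The only structural difference is that you eliminate $i$ inside \textbf{Crop}, whereas the paper hands back to \textbf{Sort}/\textbf{Trim} and lets \textbf{Trim} do it.

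However, the step you defer as the ``main obstacle'' is where the content lies, and as your \textbf{Crop} is written it is false. The two extreme columns lie inside $B_{\widehat{\mathcal{C}}}\subseteq B$, so $C_S-B=\emptyset$ for them. Hence:
\begin{itemize}
\item they are never counted as bad;
\item their cores may carry $i$ anywhere between the extreme rows;
\item neither deleting the extreme rows nor a $p$-trim touches those occurrences.
\end{itemize}
So the claim that $i$ occurs in no surviving column fails. In every such round you must also discard the two extreme strips of the failing type. That is what the cost $(r-1)+2=r+1$ per colour has to pay for: your unexplained `possibly $2$', and the $q(r+1)$ in the hypothesis. The step must say this explicitly. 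The paper's write-up meets the same point when it asserts that, after removing the extreme strips of the other type, at most $r-1$ strips carry $i$ in their cores. That assertion likewise needs the two extreme strips of the failing type to be gone.

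\textbf{The missing argument for the other columns.} It is short, but you must give it. A surviving column $S$ is non-extreme and not bad, so every $i$-coloured vertex of $C_S$ lies in $B_{\widehat{\mathcal{R}}}$. After deleting the two extreme rows and recropping, the new core lies in the old $C_S$ strictly between the old extreme rows, so it avoids $i$. The buffers were never checked and may still carry $i$, so the $p$-trim is mandatory, not optional. After it, each surviving strip equals its core and $i\notin\chi(S)$.

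\textbf{Breadth invariant.} Your invariant $2(q_{\mathcal{C}}+q_{\mathcal{R}}+1)p+b$ already fails at the start, since the hypothesis on $d$ only affords breadth $2(q+1)p+b$. The right invariant is breadth at least $2(|I_{\mathcal{X}}|+1)p+b$ for strips of type $\mathcal{X}$. It is preserved because strips of type $\mathcal{X}$ are trimmed only when a colour of $I_{\mathcal{X}}$ is removed; deleting strips of the other type only shortens them and never narrows them.

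\textbf{Minor point.} \textbf{Trim} guarantees $i$ in the cores of at least $r$ columns, not of every column as you state, though nothing depends on it.
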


\begin{proof}
We begin with an initialisation step, setting $M_0 \coloneqq M$, letting $\texttt{X} = \text{column}$, and letting $\mathcal{C}_0$ and $I_{\mathcal{C}_0}$ be the output of \zcref{lemma:StripRepresentation} applied to $M$ and $d$, $r$, $p$, $b$, and $x=2q+1$.
Further, let $\texttt{X} = \text{row}$ and let $\mathcal{R}_0$ and $I_{\mathcal{R}_0}$ be the output of \zcref{lemma:StripRepresentation} applied to $M$ and $d$, $r$, $p$, and $b$.

It follows that for both $\mathcal{X} \in \{ \mathcal{C}_0,\mathcal{R}_0 \}$ we have $|\mathcal{X}| \geq |I_{\mathcal{X}}|(r-1)+ 2q + 1 \geq |I_{\mathcal{X}}|(r-1)+ 2|I_{\mathcal{Y}}| + 1$ where $\mathcal{Y}\in\{ \mathcal{C}_0,\mathcal{R}_0 \}\setminus \{ \mathcal{X}\}$.
Moreover, for every $S\in\mathcal{X}$ we have that the breadth of $S$ is at least $2(|I_{\mathcal{X}}|+q+1)p + b$, $\chi(S) \subseteq I_{\mathcal{X}}$ and for every $i\in I_{\mathcal{X}}$ there are at least $r$ strips in $\mathcal{X}$ whose $p$-cores carry the colour $i$.

This initialisation step takes $\mathbf{poly}(d)|E(G)|$ time.
\smallskip
We now proceed inductively with $j\in[0,q]$.

Now, suppose we have constructed a mesh $M_j$ together with sets $I_{\mathcal{C}_j}, I_{\mathcal{R}_j} \subseteq [q]$ as well as $(p,b)$-padded packings $\mathcal{C}_j$ of type column and $\mathcal{R}_j$ of type row in $M_j$ such that for all $\mathcal{X} \neq \mathcal{Y}\in \{ \mathcal{C}_j,\mathcal{R}_j\}$
\begin{enumerate}
    \item $|\mathcal{X}| \geq |I_{\mathcal{X}}|(r-1) + 2|I_{\mathcal{Y}}| + 1$,
    \item for every $S\in \mathcal{X}$ it holds that $\chi(S) \subseteq I_{\mathcal{X}}$,
    \item  and every $S\in\mathcal{X}$ we have that the breadth of $S$ is at least $2(|I_{\mathcal{X}}|+q+1)p + b$, and
    \item for every $i\in I_{\mathcal{X}}$ there exist at least $r$ strips $S \in \mathcal{X}$ such that $i\in\chi (C_S)$ where $C_S$ denotes the $p$-core of $S$.
\end{enumerate}
We say that $M_j$, $I_{\mathcal{C}_j}$, $I_{\mathcal{R}_j}$, $\mathcal{C}_j$, and $\mathcal{R}_j$ \emph{satisfy the invariant}.
\smallskip

Let $M_{j+1} \coloneqq \mathsf{Crop}(\mathsf{Crop}(\mathcal{C}_j),\mathcal{R}_j)$ as well as $\mathcal{C}_{j+1}' \coloneqq \mathsf{Crop}(\mathcal{R}_j,\mathcal{C}_j)$ and $\mathcal{R}_{j+1}' \coloneqq \mathsf{Crop}(\mathcal{C}_j, \mathcal{R}_j)$.
Moreover, let $I'_{\mathcal{C}_{j+1}} \coloneqq I_{\mathcal{C}_j}$ and $I'_{\mathcal{R}_{j+1}} \coloneqq I_{\mathcal{R}_j}$.

We distinguish two major cases as follows.
It might be that $M_{j+1}$, $I'_{\mathcal{C}_{j+1}}$, $I'_{\mathcal{R}_{j+1}}$, $\mathcal{C}'_{j+1}$, and $\mathcal{R}'_{j+1}$ satisfy the invariant, however, in this case it is not a given that they also satisfy the \emph{core  property}:
\begin{itemize}
    \item For each $\mathcal{X} \in \{ \mathcal{C}_{j+1},\mathcal{R}_{j+1}\}$ and every $i\in I'_{\mathcal{X}}$ there exist at least $r$ strips $S \in \mathcal{X}'$ such that $i\in\chi (C_S-B)$ where $C_S$ denotes the $p$-core of $S$ in $M_{j+1}$ and $B$ denotes the boundary of $\mathcal{R}_{j+1}'$ and $\mathcal{C}_{j+1}'$.
\end{itemize}
However, they might also simply fail to satisfy the invariant in the first place.
In both cases we will discard at least one colour from at least one of the two sets $I'_{\mathcal{C}_{j+1}}$ and  $I'_{\mathcal{R}_{j+1}}$ and proceed with a sorting subroutine akin to the one from \zcref{lemma:StripRepresentation}.

Before we proceed, notice that the construction so far has made sure that $M_{j+1}$, $I'_{\mathcal{C}_{j+1}}$, $I'_{\mathcal{R}_{j+1}}$, $\mathcal{C}'_{j+1}$, and $\mathcal{R}'_{j+1}$ must always satisfy properties i), ii), and iii) of the invariant.
However, there will be one situation in which we enter \textbf{Case 2} where both, property i) and property iv) are violated.
This situation arises because we sometimes first enter \textbf{Case 1} and then remove two strips from the packing $\mathcal{Y}$ without updating the index set $I_{\mathcal{Y}}$, resulting in the invariant no longer being satisfied due to both, properties i) and iv) failing.
In this case we proceed to \textbf{Case 2} but have to make a slight tweak to the numbers in order to account for this issue.

\paragraph{Case 1: The invariant is satisfied.}
Suppose $M_{j+1}$, $I_{\mathcal{C}_j}$, $I_{\mathcal{R}_j}$, $\mathcal{C}'_{j+1}$, and $\mathcal{R}'_{j+1}$ satisfy the invariant.
In case they also satisfy the core property, we may set $\mathcal{C}\coloneqq \mathsf{Lift}(M,\mathcal{C}'_{j+1})$ and $\mathcal{R}\coloneqq \mathsf{Lift}(M,\mathcal{R}'_{j+1})$ and have thereby found the desired packings.
\smallskip

If, however, the core property is not satisfied, there exists a selection $\mathcal{X} \neq \mathcal{Y} \in \{ \mathcal{C}_{j+1},\mathcal{R}_{j+1}\}$ and a colour $i\in I'_{\mathcal{X}}$ such that there are at most $r-1$ strips $S \in \mathcal{X}'$ such that $i\in\chi (C_S-B)$ where $C_S$ denotes the $p$-core of $S$ in $M_{j+1}$ and $B$ denotes the boundary of $\mathcal{R}_{j+1}'$ and $\mathcal{C}_{j+1}'$.

We now update $\mathcal{Y}'$ to be the packing obtained from $\mathcal{Y}'$ by removing the left- and the rightmost strip of the packing.
Then we update $M_{j+1}$ to be the mesh $\mathsf{Crop}(M_{j+1},\mathcal{Y}')$ and we update $\mathcal{X}'$ to be $\mathsf{Crop}(\mathcal{Y}',\mathcal{X}')$, both after updating $\mathcal{Y}'$.

After performing the updates, by our assumption, we know that there are now at most $r-1$ strips $S$ in the updated $\mathcal{X}'$ such that $i\in\chi(C_S)$.
Hence, property iv) of the invariant is violated.
Moreover, since $\mathcal{Y}'$ has shrunken by $2$ without us updating the set $I'_{\mathcal{X}}$ it might now be true that also property i) of the invariant is violated.
Since we are sure that at least property iv) is violated, we may now pass over to \textbf{Case 2} which will now be forced to remove colour $i$ from the set $I'_{\mathcal{X}}$, thereby rectifying -- if necessary -- the issue with property i).

\paragraph{Case 2: The invariant is not satisfied.}
We may assume that there exist $\mathcal{X}\in \{ \mathcal{C}_{j+1},\mathcal{R}_{j+1}\}$ and $i\in I'_{\mathcal{X}}$ such that $\mathcal{X}'$ contains at most $r-1$ strips that carry the colour $i$ in their $p$-cores.

In case we reached \textbf{Case 2} via \textbf{Case 1} we may assume that $\mathcal{X}$ and $i$ are precisely the same as they were in \textbf{Case 1}.
Moreover, if this is the case, we have removed two strips from $\mathcal{Y}$ and may set $n_{\mathcal{X}} \coloneqq |I'_{\mathcal{X}}|-1$ as well as $n_{\mathcal{Y}} \coloneqq |I'_{\mathcal{Y}}|$.
Otherwise, we entered \textbf{Case 2} directly because the invariant was not satisfied in the first place.
In this case we set $n_{\mathcal{X}} \coloneqq |I'_{\mathcal{X}}|$ as well as $n_{\mathcal{Y}} \coloneqq |I'_{\mathcal{Y}}|$.

Since $\mathcal{C}_j$ and $\mathcal{R}_j$ satisfy the invariant, we have that $M_{j+1}$ is an $(x_{j+1},y_{j+1})$-mesh where
\begin{align*}
    x_{j+1} & \geq (|I_{\mathcal{R}_j}|(r-1) + 2n_{\mathcal{C}_{j+1}} + 1) \cdot (2(|I_{\mathcal{R}_j}|+q+1)p + b)\\
    y_{j+1} & \geq (|I_{\mathcal{C}_j}|(r-1) + 2n_{\mathcal{R}_{j+1}} + 1) \cdot (2(|I_{\mathcal{C}_j}|+q+1)p + b)
\end{align*}
Moreover, $(\mathsf{compass}(M_{j+1}),\chi^{\mathcal{Z}})$ is a $q^{\mathcal{Z}}$-colorful graph where $q^{\mathcal{Z}} \coloneqq |I'_{\mathcal{Z}}|$ and $\chi^{\mathcal{Z}}$ is the restriction of the image of $\chi$ to the set $I'_{\mathcal{Z}}$ for each $\mathcal{Z} \in \{ \mathcal{C}_{j+1},\mathcal{R}_{j+1}\}$.

Under these assumptions, we may apply the method established in the proof of \zcref{lemma:StripRepresentation} to $(\mathsf{compass}(M_{j+1}),\chi^{\star})$, $I_{\mathcal{Z}}$, and the packing $\mathcal{Z}'$ for each of the two choices for $\mathcal{Z}\neq \mathcal{W}\in\{ \mathcal{C}_{j+1},\mathcal{R}_{j+1}\}$ individually.
The result are a set $I_{\mathcal{Z}^{\star}}$ and a packing $\mathcal{Z}^{\star}$ such that
\begin{enumerate}
    \item $|\mathcal{Z}^{\star}| \geq |I_{\mathcal{Z}^{\star}}|(r-1) + 2n_{\mathcal{W}} + 1$,
    \item each strip of $\mathcal{Z}^{\star}$ has breadth at least $2(|I_{\mathcal{Z}^{\star}}| + 1)p + b$
    \item for every $S \in \mathcal{Z}^{\star}$ it holds that $\chi^{\mathcal{Z}}(S) \subseteq I_{\mathcal{Z}^{\star}}$,
    \item for every $h\in I_{\mathcal{Z}^{\star}}$ there exist $r$ strips $S\in\mathcal{Z}^{\star}$ such that $h\in \chi^{\mathcal{Z}}(C_S)$ where $C_S$ denotes the $p$-core of $S$, and
    \item every strip in $\mathcal{Z}^{\star}$ is contained in some strip from $\mathcal{Z}'$.
\end{enumerate}
Moreover, this procedure takes at most $\mathbf{poly}(d)|E(G)|$ time.

Please notice that we make use of the \textsl{proof} of \zcref{lemma:StripRepresentation} here rather than its statement.
The reason is, because in this situation we need to prescribe the initial packing $\mathcal{Z}'$ of strips instead of starting with one greedily chosen such as the one in the beginning of the proof of \zcref{lemma:StripRepresentation}.
However, it is easily seen that the subroutines \textbf{Sort} and \textbf{Trim} may be applied directly to $\mathcal{Z}'$ in order to reach the outcome described above.

From the fact that $\chi(S) \subseteq I'_{\mathcal{Z}}$ for all $S\in \mathcal{Z}'$ and the fact that each strip in $\mathcal{Z}^{\star}$ is contained in some strip of $\mathcal{Z}'$ it also follows that $\chi^{\mathcal{Z}}(S) \subseteq I'_{\mathcal{Z}^{\star}}$ implies $\chi(S) \subseteq I_{\mathcal{Z}^{\star}}$.
It also follows from our assumptions on $i\in I'_{\mathcal{X}}$ that $|I_{\mathcal{X}^{\star}}| \leq |I'_{\mathcal{X}}| - 1$.

Let now $\mathcal{C}_{j+1} \coloneqq \mathcal{C}_{j+1}^{\star}$ and $\mathcal{R}_{j+1} \coloneqq \mathcal{R}_{j+1}^{\star}$.
We also set $I_{\mathcal{C}_{j+1}} \coloneqq I_{\mathcal{C}_{j+1}^{\star}}$ and $I_{\mathcal{R}_{j+1}} \coloneqq I_{\mathcal{R}_{j+1}^{\star}}$.

If we reached \textbf{Case 2} via \textbf{Case 1}, we had that $n_{\mathcal{X}} = |I'_{\mathcal{X}}|-1$.
However, in this case we also knew that $i\in I'_{\mathcal{X}}$ and we know that $i \notin I_{\mathcal{X}}$.
Hence, we now have that $n_{\mathcal{X}} \geq |I_{\mathcal{X}}|$.
This inequality is trivially true -- in fact, even a strict inequality holds -- if we entered \textbf{Case 2} directly.

It now follows that $M_{j+1}$, $I_{\mathcal{C}_{j+1}}$, $I_{\mathcal{R}_{j+1}}$, $\mathcal{C}_{j+1}$, and $\mathcal{R}_{j+1}$ satisfy the invariant.
\medskip

In the above case distinction, one of two outcomes is always reached: 
Either we find the objects as required by the assertion, or we create a collection $M_{j+1}$, $I_{\mathcal{C}_{j+1}}$, $I_{\mathcal{R}_{j+1}}$, $\mathcal{C}_{j+1}$, and $\mathcal{R}_{j+1}$ satisfying the invariant our of the collection $M_{j}$, $I_{\mathcal{C}_{j}}$, $I_{\mathcal{R}_{j}}$, $\mathcal{C}_{j}$, and $\mathcal{R}_{j}$
In the second outcome we made sure that $|I_{\mathcal{C}_{j+1}}| + |I_{\mathcal{R}_{j+1}}| < |I_{\mathcal{C}_{j}}| + |I_{\mathcal{R}_{j}}|$.
Hence, in every ``round'' of the iterative process above we either terminate or lose one of at most $q$ colours from one of the two sets $I_{\mathcal{R}_j}$ and $I_{\mathcal{C}_j}$.
Therefore, this process must come to a halt after at most $2q$ iterations and the assertion follows.
\end{proof}

\paragraph{Confining colours into tiles.}
With \zcref{lemma:Cropping} we have now gotten rid of the elusive ``end-tiles'' and may finally focus on the main problem:
Instead of capturing the colours in the cores of many strips, we want to ensure that a single colour appears in many \textsl{tiles}.
Indeed, we prove a slightly stronger variant that allows for better analysis.
That is, we prove that we may ensure that for every colour $i$ that appears in \textsl{some} strip there are many strips, each with their own tile in which $i$ appears.

We set 
\begin{align*}
    \mathsf{r}(q,r) & \coloneqq r(r-1) + (q+1)r, \text{ and}\\
    \mathsf{d}_{\ref{lemma:Tiles}}(q,p,b,r) & \coloneqq \big(q (\mathsf{r}(q,r) + 1) + 2q + 1 \big) \cdot \big( 2(q+1)p + b \big).
\end{align*}

\begin{lemma}\label{lemma:Tiles}
Let $d$, $r\geq 2$, $p$, $b$, and $q$ be non-negative integers.
Let $(G,\chi)$ be a $q$-colorful graph, $d \geq \mathsf{d}_{\ref{lemma:Tiles}}(q,p,b,r)$ and $M$ be a $(d \times d)$-mesh in $G$ such that $M$ is flat witnessed by the rendition $\rho$.

Then there exist possibly empty sets $I_{\mathcal{C}},I_{\mathcal{R}}\subseteq [q]$ and two $(p,b)$-padded packings $\mathcal{C}$ of type column and $\mathcal{R}$ of type row such that if $\widehat{M}\coloneqq \mathsf{Crop}(\mathsf{Crop}(M,\mathcal{C}),\mathcal{R})$, $\widehat{\mathcal{C}} \coloneqq \mathsf{Crop}(\mathcal{R},\mathcal{C})$, and $\widehat{\mathcal{R}} \coloneqq \mathsf{Crop}(\mathcal{C},\mathcal{R})$, then the following hold for both $\mathcal{X} \in \{ \mathcal{C},\mathcal{R} \}$
\begin{enumerate}
    \item $\widehat{\mathcal{X}}\neq\emptyset$,
    \item for every $S\in \widehat{\mathcal{X}}$ we have that $\chi(S) \subseteq I_{\mathcal{X}}$, and
    \item for every $i\in I_{\mathcal{X}}$ there exist $\mathcal{Y}\neq \mathcal{Z} \in \{ \widehat{\mathcal{C}},\widehat{\mathcal{R}} \}$ such that there exist at least $r$ distinct strips $S \in \mathcal{Y}$ such that $i\in\chi(T)$ for some $T \in \mathsf{Tiles}^p_{\mathcal{Z}}(S)$.
\end{enumerate}
Moreover, there exists an algorithm that takes $p$, $b$, $r$, $(G,\chi)$, $M$, and $\rho$ as input and finds $\mathcal{C}$ and $\mathcal{R}$ in time $\mathbf{poly}(d)|E(G)|$.
\end{lemma}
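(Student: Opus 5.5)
The plan is to apply \zcref{lemma:Cropping} with $\mathsf{r}(q,r)$ in place of $r$. The bound $d\geq \mathsf{d}_{\ref{lemma:Tiles}}(q,p,b,r)$ is exactly the hypothesis of that lemma for this parameter. We obtain $I_{\mathcal{C}},I_{\mathcal{R}}$, packings $\mathcal{C},\mathcal{R}$, the cropped mesh $\widehat{M}$, the cropped packings $\widehat{\mathcal{C}},\widehat{\mathcal{R}}$ and the boundary $B$. Property i) of \zcref{lemma:Cropping} gives $|\widehat{\mathcal{X}}|\geq 1$, which is item i), and item ii) is immediate. All work goes into item iii), which we derive from property iii) of \zcref{lemma:Cropping}: for every $i\in I_{\mathcal{X}}$ there are at least $\mathsf{r}(q,r)=r(r-1)+(q+1)r$ strips $S\in\widehat{\mathcal{X}}$ with $i\in\chi(C_S-B)$. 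Here $C_S$ is the $p$-core.

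Fix $\mathcal{X}$ with partner $\mathcal{Y}$, a colour $i\in I_{\mathcal{X}}$, and a strip $S$ as above, and pick a vertex $v\in C_S-B$ with $i\in\chi(v)$. Because the perimeter of $\widehat{M}$ is built from outermost padding paths, the core $C_S$ of $S$ inside $\widehat{M}$ is partitioned (using flatness of $\rho$, so every vertex of the compass lies in the interior of a unique region bounded by mesh paths or on such paths) into three kinds of pieces: the tiles in $\mathsf{Tiles}^p_{\mathcal{Y}}(S)$, the intersections of $C_S$ with the left and right buffers of strips $Z\in\mathcal{Y}$, and the frame paths themselves. Removing $B$ discards the pieces lying in the leftmost and rightmost strips of $\mathcal{Y}$.

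If $v$ lies in a tile of $S$, then $S$ contributes to the count for $(\mathcal{X},\mathcal{Y})$ directly. Otherwise $v$ lies in $C_S\cap L_Z$ or $C_S\cap R_Z$ for some non-extremal $Z\in\mathcal{Y}$, or on the boundary paths; the latter can be absorbed into adjacent regions by a convention on tiles, and checking this is routine. In that case $v$ lies in the buffer $L_Z$ or $R_Z$ of $Z$, which is bounded by rows of $Z$'s padding and by paths of $S$'s core. Within $Z$ viewed with respect to $\mathcal{X}$, the vertex $v$ then sits in a $(S',S'',p)$-tile or $(S,p)$-tile of $Z$: it lies between consecutive strips of $\mathcal{X}$, or inside the padding window of $S$ itself, which is exactly the $(S,p)$-tile of $Z$. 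Hence $i\in\chi(T)$ for some $T\in\mathsf{Tiles}^p_{\mathcal{X}}(Z)$, and $Z$ contributes to the count for $(\mathcal{Y},\mathcal{X})$.

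Counting is the final step. If at least $r$ of the good strips $S$ have $i$ in one of their own tiles, we are done with $(\mathcal{Y},\mathcal{Z})=(\widehat{\mathcal{X}},\widehat{\mathcal{Y}})$. Otherwise at least $\mathsf{r}(q,r)-(r-1)=(r-1)^2+(q+1)r+\dots\geq r(r-1)+1$ strips $S$ send colour $i$ into buffers of strips of $\mathcal{Y}$. We must show that these land in at least $r$ distinct $Z\in\mathcal{Y}$.

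A single $Z$ can receive witnesses from many $S$. However, $Z$'s tiles with respect to $\mathcal{X}$ are exactly the pieces meeting each $S$, so one $Z$ with $i$ in a tile suffices only to count once. Pigeonholing therefore needs a different argument, and I expect this to be the main obstacle. The route I would try is as follows. The witnesses $v$ for distinct $S$ land in buffer–core intersections $C_S\cap(L_Z\cup R_Z)$, and these lie in pairwise distinct tiles of $Z$, since the windows of different $S$ are separated by $S$'s padding. If fewer than $r$ strips $Z$ received witnesses, some $Z$ would collect at least $r$ witnessing strips $S$. In that case we switch roles: $Z$ is one strip with many tiles carrying $i$, and we use the slack $(q+1)r$ in $\mathsf{r}(q,r)$, possibly re-invoking the Sort/Trim argument of \zcref{lemma:Cropping} on the other packing with at most $q+1$ rounds of loss of $r$ strips. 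The aim is to guarantee that $i\in I_{\mathcal{Y}}$ also has $r$ distinct core-witnessing strips in $\mathcal{Y}$, and then to redo the argument there. The algorithmic claim follows since every step is a linear-time scan on top of \zcref{lemma:Cropping}.
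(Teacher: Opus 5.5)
There is a genuine gap, and it sits in your reduction for colour occurrences that are not in a tile of $S$. The tiles of a strip $Z\in\widehat{\mathcal{Y}}$ with respect to $\widehat{\mathcal{X}}$ are interiors of cycles built from the \emph{innermost} padding paths of $Z$: the rightmost path of its left buffer and the leftmost path of its right buffer. All of them therefore lie inside the core window of $Z$. A vertex $v\in C_S$ cannot lie between consecutive strips of $\mathcal{X}$, since it is inside $S$. The remaining candidate, the $(S,p)$-tile of $Z$, is the same graph as the $(Z,p)$-tile of $S$, namely the core--core intersection. So a vertex $v\in C_S\cap L_Z$ (or $C_S\cap R_Z$) lies in no tile of $S$ and in no tile of $Z$.

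Consequently a colour $i\in I_{\mathcal{C}}\cap I_{\mathcal{R}}$ may occur only in column-core/row-buffer and row-core/column-buffer intersections. In that configuration every conclusion of \zcref{lemma:Cropping} holds, yet no tile carries $i$, so item iii) is simply false for the packings that lemma returns. No counting or role-switching on fixed packings can repair this. Your pigeonhole sketch also rests on the false premise that $Z$ receives $i$ in one of its tiles. Your argument does correctly handle colours in $I_{\mathcal{X}}\setminus I_{\mathcal{Y}}$: these cannot occur in any strip of $\widehat{\mathcal{Y}}$, so every occurrence in $C_S-B$ lies in a gap tile of $S$. However, you never separate this case out, and it is the only part of item iii) your argument establishes.

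The missing idea is that you must modify the packings, which the statement permits, since item i) only asks for non-emptiness. Delete a non-extremal row $R$. Then for every non-extremal column $C$, the three tiles of $C$ around $R$, together with the intersection of the core of $C$ with the buffers of $R$, merge into a single tile. Deletions only merge tiles, so colours already placed in tiles stay there.

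The paper's proof uses exactly this and processes the colours of $I_{\mathcal{C}}\cap I_{\mathcal{R}}$ one at a time. Suppose fewer than $r$ columns already carry $i$ in a tile. Greedily match distinct non-extremal columns $C_j$ and rows $R_j$ such that $i$ occurs in the core of $C_j$ within $R_j$.
\begin{itemize}
\item If the matching has size at least $r$, delete $R_1,\dots,R_r$.
\item Otherwise, by maximality, the many other columns carrying $i$ in their cores outside tiles must do so inside one of the at most $r-1$ matched rows. By pigeonhole, a single $R_{j^\star}$ serves $r$ columns, and only it is deleted.
\end{itemize}
Each colour costs at most $r$ rows, which is what the $(q+1)r$ term of $\mathsf{r}(q,r)$ pays for; the $r(r-1)$ term pays for the pigeonhole step.
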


\begin{proof}
We begin by applying \zcref{lemma:Cropping} to $(G,\chi)$ and $M$ with parameters $d$, $p$, $b$, $q$, and $r' = \mathsf{r}(q,r)$.
By doing so, we obtain in time $\mathbf{poly}(d)|E(G)|$ sets $I_{\mathcal{C}_0},I_{\mathcal{R}_0}\subseteq [q]$ and two $(p,b)$-padded packings $\mathcal{C}'_0$ of type column and $\mathcal{R}'_0$ of type row such that if $M'\coloneqq \mathsf{Crop}(\mathsf{Crop}(M,\mathcal{C}'_0),\mathcal{R}'_0)$, $\mathcal{C}_0 \coloneqq \mathsf{Crop}(\mathcal{R}'_0,\mathcal{C}'_0)$, $\mathcal{R}_0 \coloneqq \mathsf{Crop}(\mathcal{C}'_0,\mathcal{R}'_0)$, and $B$ is the boundary of $\mathcal{R}_0$ and $\mathcal{C}_0$, then the following hold for all $\mathcal{X}\neq \mathcal{Y}\in\{ \mathcal{R}_0,\mathcal{C}_0\}$
\begin{itemize}
    \item $|\mathcal{X}| \geq |I_{\mathcal{X}}|(\mathsf{r}(q,r)-1)+ 2|I_{\mathcal{Y}}| + 1$,
    \item for every $S\in \mathcal{X}$ it holds that $\chi(S) \subseteq I_{\mathcal{X}}$, and
    \item for every $i\in I_{\mathcal{X}}$ there exist at least $\mathsf{r}(q,r)$ strips $S \in \mathcal{X}$ such that $i\in\chi (C_S-B)$ where $C_S$ denotes the $p$-core of $S$.
\end{itemize}

We first process the colours in $I \coloneqq I_{\mathcal{C}_0} \cap I_{\mathcal{R}_0}$.
Let $I^{\star} \coloneqq \emptyset$ be the set of all colours from $I$ processed so far.
We prove by induction on $|I^{\star}|$ that we can find $(p,b)$-padded packings $\mathcal{R}_i$ of rows and $\mathcal{C}_i$ of columns such that for all choices of $\mathcal{X}\neq\mathcal{Y} \in \{\mathcal{R},\mathcal{C} \}$
\begin{enumerate}[label=(\alph*)]
    \item $\mathcal{X}_{|I^{\star}|}\neq\emptyset$,
    \item for every $S\in \mathcal{X}_{|I^{\star}|}$ we have that $\chi(S) \subseteq I_{\mathcal{X}_0}$,
    \item for every $i\in I_{\mathcal{X}_0}\setminus I^{\star}$ there exist at least $\mathsf{r}(q,r) - r|I^{\star}|$ strips $S \in \mathcal{X}$ such that $i\in\chi (C_S-B)$ where $C_S$ denotes the $p$-core of $S$ and $B$ is the boundary of $\mathcal{R}_{|I^{\star}|}$ and $\mathcal{C}_{|I^{\star}|}$, and
    \item for every $i\in I^{\star}$ there exist at least $r$ distinct strips $S \in \mathcal{C}_{|I^{\star}|}$ such that $i\in\chi(T)$ for some $T \in \mathsf{Tiles}^p_{\mathcal{R}_{|I^{\star}|}}(S)$.
\end{enumerate}
The base case, that is the case where $I^{\star}=\emptyset$, holds trivially by our choice of $M'$, $\mathcal{C}_0$, and $\mathcal{R}_0$.

Now suppose we have that $n \coloneqq |I^{\star}| \in [0,|I|-1]$ and let $i\in I\setminus I^{\star}$.
Further assume that we have already constructed $\mathcal{C}_n$ and $\mathcal{R}_n$ satisfying the requirements above and let us denote by $\mathcal{C}_n'$ the packing obtained from $\mathcal{C}_n$ by removing the left- and the rightmost strip.
Similarly, denote by $\mathcal{R}'_n$ the packing obtained from $\mathcal{R}_n$ by removing the left- and the rightmost strip.
Let $B_n$ denote the boundary of $\mathcal{C}_n$ and $\mathcal{R}_n$.
For any $S \in \mathcal{R}'_n \cap \mathcal{C}'_n$, denote by $\mathsf{Core}_n(S)$ the $p$-core of $S$ minus $B_n$.

Suppose there exist at least $r$ strips $S\in\mathcal{C}'_n$ for which $i\in \chi(T)$ for some tile $T\in\mathsf{Tiles}^p_{\mathcal{R}_n}$.
In this case we may add $i$ to $I^{\star}$ immediately and no further processing is required.
We set $\mathcal{C}_{n+1} \coloneqq \mathcal{C}_n$ and $\mathcal{R}_{n+1} \coloneqq \mathcal{R}_n$ and are done by induction.
\smallskip

Hence, we may assume that there are at most $r-1$ strips $S\in\mathcal{C}'_n$ for which $i\in \chi(T)$ for some tile $T\in\mathsf{Tiles}^p_{\mathcal{R}_n}$.
In this situation, we greedily choose pairs $(C_j,R_j)$, $j\in[\ell]$ such that
\begin{itemize}
    \item $C_j \in \mathcal{C}'_n\setminus \{ C_h \colon h\in[j-1] \}$,
    \item $R_j \in \mathcal{R}'_n\setminus \{ R_h \colon h\in[j-1]\}$, and
    \item $i\in \chi( \mathsf{Core}_n(C_j) \cap \mathsf{Core}_n(R_j) ) $
\end{itemize}
until no such choice is possible any more.
What follows is a case distinction on whether $\ell \geq r$ or $\ell \leq r-1$.

\paragraph{Case 1: $\ell \geq r$.}
In this case we add $i$ to $I^{\star}$.
Let $\mathcal{R}_{n+1} \coloneqq \mathcal{R}\setminus \{ R_1,\dots, R_r\}$ and $\mathcal{C}_{n+1} \coloneqq \mathcal{C}_n$.
We claim that this selection of strips satisfies all four requirements above.
First of all (b) is still true for both $\mathcal{R}_{n+1}$ and $\mathcal{C}_{n+1}$.
Also, (a) clearly holds for $\mathcal{C}_{n+1}$.
Since we removed precisely $r$ strips from $\mathcal{R}_n$, it follows from (c) for $\mathcal{R}_n$ that $\mathcal{R}_{n+1}$ still contains at least $\mathsf{r}(q,r)-r|I^{\star}| > 0$ many strips that carry the colour $i$ in their core.
Hence, (a) also holds for $\mathcal{R}_{n+1}$.
Similarly, it is easy to see that (c) holds for both $\mathcal{R}_{n+1}$ and $\mathcal{C}_{n+1}$ by the same observation.
That is, for each $\mathcal{X}\in \{\mathcal{R},\mathcal{C}\}$, and $j\in I_{\mathcal{X}_0}\setminus I^{\star}$ the property (c) for $\mathcal{X}_{n}$ ensured that there are at least $\mathsf{r}(q,r)-r|I^{\star}\setminus\{ i\}|$ many strips $S$ in $\mathcal{X}_n$ that carry the colour $j$ in their cores.
Since we removed precisely $r$ strips from $\mathcal{R}_n$ and none from $\mathcal{C}_n$ we get that there now are still at least
\begin{align*}
    \mathsf{r}(q,r)-r|I^{\star}\setminus\{ i\}| - r = \mathsf{r}(q,r)-r|I^{\star}|
\end{align*}
many strips remaining that carry the colour $j$ in their core.
\smallskip

So the only requirement left to discuss now is (d).

The only difference between $\mathcal{R}_{n+1}$ and $\mathcal{R}_n$ is the removal of the strips $R_1,\dots,R_n$.
Consider any $C\in\mathcal{C}_n'$ and $R\in\mathcal{R}_n'$ and let $\mathcal{L}\coloneqq \mathcal{R}_n' \setminus \{ R\}$.
Notice that $\mathsf{Tiles}^p_{R}(C)$ contains precisely $3$ tiles in which $R$ is involved:
The tile $T_1$ which borders $R$ only in its left $p$-buffer, the tile $T_2$ that lives in the intersection of $C$ and $R$, and the tile $T_3$ that borders $R$ only in its right $p$-buffer.
All three tiles are also present in $\mathsf{Tiles}^p_{\mathcal{R}_n}(C)$.
However, $\mathsf{Tiles}^p_{\mathcal{L}}(C)$ has precisely two tiles less.
Indeed, there exists a tile $T\in \mathsf{Tiles}^p_{\mathcal{L}}(C)$ which contains the union of $T_1$, $T_2$, and $T_3$ together with the intersection of the buffer of $R$ with the $p$-core of $C$.
The key assumption which makes this argument work is that we chose both $C$ and $R$ to be distinct from the extremal strips in their respective packings.
This implies that removing $R$ still leaves another strip from $\mathcal{L}$ to the left and the right of $R$ which contribute their respective boundaries to the creating of the tile $T$.

This means, that any tile from $\mathsf{Tiles}^p_{\mathcal{R}_n}(C)$ is contained in a tile from $\mathsf{Tiles}^p_{\mathcal{R}_{n+1}}(C)$ for any $C\in\mathcal{C}_n' = \mathcal{C}_{n+1}'$.
Hence, if $C\in\mathcal{C}_{n+1}'$ had a tile $T\in\mathsf{Tiles}^p_{\mathcal{R}_n}(C)$ such that $j\in \chi(T)$ for some colour $j\in[q]$, then there must also exist at least one such tile in $\mathsf{Tiles}^p_{\mathcal{R}_{n+1}}(C)$.
It follows that (d) is satisfied for all $j\in I^{\star}\setminus\{ i\}$.
Moreover, for each $j\in[r]$ we have that $i\in \chi( \mathsf{Core}_n(C_j) \cap \mathsf{Core}_n(R_j) )$.
From the discussion above it now follows that there exists a tile $T_j \in \mathsf{Tiles}^p_{\mathcal{R}_{n+1}}(C_j)$ such that $\mathsf{Core}_n(C_j) \cap \mathsf{Core}_n(R_j) \subseteq T_j$.
Hence, $i\in \chi(T_j)$.
Since we were able to pick at least $r$ distinct such columns $C_j$ from $\mathcal{C}'_{n+1}$ we are now sure that there exist at least $r$ distinct columns in $\mathcal{C}_{n+1}$ such that each of them has a tile that carries the colour $i$.
So (d) is satisfied by $\mathcal{R}_{n+1}$ and $\mathcal{C}_{n+1}$ and the case is complete.

\paragraph{Case 2: $\ell \leq r-1$.}
Now we have found at most $r-1$ pairs $(C_j,R_j)$ as above.
However, by (c), there are at least
\begin{align*}
    \mathsf{r}(q,r) - r|I^{\star}| & \geq \mathsf{r}(q,r) - r\cdot q\\
    & = r(r-1) + (q+1)r - r \cdot q\\
    & \geq r(r-1) + 2 \\
    & = r^2 - r + 2\\
    & = r-1 + (r-1)^2 +2
\end{align*}
columns $C\in\mathcal{C}_{n}$ such that $i \in \chi(C_S)$ where $C_S$ denotes the $p$-core of $C$.
At most $2$ of those belong to the boundary of $\mathcal{C}_{n}$ and $\mathcal{R}_{n}$ which leaves at least $r-1 + (r-1)^2$ many such columns in $\mathcal{C}_{n}'$.
Consider a set $\mathcal{F}$ of $(r-1)^2$ such columns which are distinct from $C_1, \ldots , C_j$, then there must exist some $j^{\star} \in[\ell]$ such that $i\in \chi( \mathsf{Core}_n(C) \cap \mathsf{Core}_n(R_{j^{\star}}) )$ for each $C\in\mathcal{F}$ by the pigeonhole principle.
Add $C_{j^{\star}}$ and we have found a family $\mathcal{U}$ of $r$ columns from $\mathcal{C}_{n}'$, all of which carry the colour $i$ in the intersection of their $p$-core with the $p$-core of $R_{j^{\star}}$.

We now set $\mathcal{C}_{n+1} \coloneqq \mathcal{C}_n$ and $\mathcal{R}_{n+1} \coloneqq \mathcal{R}_n\setminus \{ R_{j^{\star}}\}$, add $i$ to $I^{\star}$, and claim that this selection satisfies properties (a), (b), (c), and (d).

Similar to before, it is easy to see that (a) still holds for $\mathcal{C}_{n+1}$ and $\mathcal{R}_{n+1}$, the latter due to the lower bound on $\mathcal{R}_n$ guaranteed by (c).
Similarly, (b) must still hold.
Now for (c), we have increased the size of $I^{\star}$ by $1$ and decreased the size of $\mathcal{R}_{n+1}$ by $1$ compared to $\mathcal{R}_n$.
Hence (c) is also easily seen to hold.

As before, the only remaining case is (d).
Recall our discussion on the way new tiles emerge from old when removing strips from $\mathcal{R}_n'$.
This time, we only removed $R_{j^{\star}}$.
However, for each $C\in \mathcal{U}$ there now exists a tile $T$ in $\mathsf{Tiles}^p_{\mathcal{R}_{n+1}}(C)$ which contains the entire intersection of the $p$-core of $R_{j^{\star}}$ with the $p$-core of $C$.
Hence, $i\in \chi(T)$.
Moreover, as before, the same discussion shows that any colour present in some tile from $\mathsf{Tiles}^p_{\mathcal{R}_{n}}(C)$ for any $C\in\mathcal{C}_{n+1}$ must also be present in a tile from $\mathsf{Tiles}^p_{\mathcal{R}_{n+1}}(C)$ and thus, (d) is satisfied by $\mathcal{C}_{n+1}$ and $\mathcal{R}_{n+1}$.
\bigskip

It now follows by induction that, after $|I|$ rounds of the process described above, we have reached the case $n=|I|$ and $I^{\star} = I$.
In this case we now know that for $\mathcal{C}_n$ and $\mathcal{R}_n$, the assertion of our lemma holds for all $i\in I = I_{\mathcal{C}_0} \cap I_{\mathcal{R}_0}$.
So all that is left to discuss are the colours that are private to either the rows or the columns.

However, let $\mathcal{X}\neq\mathcal{Y} \in \{ \mathcal{R}, \mathcal{C} \}$, $i\in I_{\mathcal{X}_0}\setminus I^{\star}$, and $C\in \mathcal{X}_n$ such that $i\in \chi(C_S)$ where $S$ denotes the $p$-core of $S$.
There are still at least $\mathsf{r}(q,r) - qr = r(r-1) + r \geq r$ many such strips in $\mathcal{X}_n$.
Moreover, since $i\notin I^{\star}$, we have that $i\notin I_{\mathcal{Y}_0}$.
This means, that $i$ cannot appear in the intersection between a row and a column.
Hence, if $i\in \chi(C_S)$ for some $S\in\mathcal{X}_n$, then there must be some $T\in\mathsf{Tiles}^p_{\mathcal{Y}_n}(S)$ such that $i\in \chi(T)$.
Thus, by lifting our choice of $\mathcal{R}_n$ and $\mathcal{C}_n$ back to $M$, we have found the desired packings of strips together with the sets $I_{\mathcal{C}_0}$ and $I_{\mathcal{R}_0}$ of colours.
\end{proof}

\section{Capturing a rainbow in the middle row}\label{sec:Rainbow}
Now that we are able to sort out colours and make sure that colours that were not sorted out are abundant in the tiles of the rows and columns we selected, we want to start constructing the new mesh which is supposed to be homogeneous by the end of our efforts.
In this section, we make the first of two steps towards this.

We also want to ensure that the tangle of the new mesh we create is a truncation of the tangle of the original mesh.
This is actually quite easy to argue and mainly involves the use of the following easy observation about paths in meshes.

\begin{observation}\label{obs:meshconnectivity}
    Let $n,m,k$ be positive integers with $k \geq \min(n,m)$.
    Let $G$ be a graph containing an $(n \times m)$-mesh $M$ and let $S \subseteq V(M)$ be a set of vertices intersecting at least $k$ pairwise distinct horizontal or vertical paths of $M$.
    Then for any horizontal or vertical path $P$ of $M$, there exists $k$ disjoint paths between $V(P)$ and $S$ in $M$ (and thus in $G$).
\end{observation}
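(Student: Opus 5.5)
The plan is to prove \zcref{obs:meshconnectivity} with Menger's Theorem inside $M$. By symmetry we may assume that $P$ is a horizontal path $P_a$. Suppose for a contradiction that there is no family of $k$ disjoint $V(P)$-$S$-paths in $M$. Menger's Theorem then gives a set $X\subseteq V(M)$ with $|X|\leq k-1$ meeting every $V(P)$-$S$-path in $M$. Since $k\geq\min(n,m)$, we have $|X|<\min(n,m)$. So $X$ misses at least one horizontal path $P_h$ and at least one vertical path $Q_\ell$ of $M$.

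The main step is to show that $P\cup P_h\cup Q_\ell$ lies in a single component of $M-X$ that also meets $S$. First, $Q_\ell$ intersects every horizontal path, in particular $P_h$. Every vertical path $Q_j$ meets $P_h$. Every horizontal path $P_i$ meets $Q_\ell$. Hence $P_h\cup Q_\ell$ is connected and avoids $X$, and every path of $M$ not hit by $X$ is attached to it. This gives a large connected piece $K$ of $M-X$ that contains every horizontal or vertical path disjoint from $X$.

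Next I count to show that some path of $M$ that avoids $X$ meets $S$. This step needs care, because $S$ is only assumed to touch $k$ distinct horizontal or vertical paths. The hypothesis gives $k$ pairwise distinct paths $R_1,\dots,R_k$ of $M$ (horizontal or vertical) with $S\cap V(R_t)\neq\emptyset$. The set $X$ has at most $k-1$ vertices. If $X$ met all of $R_1,\dots,R_k$, some vertex of $X$ would have to lie on two of them. A vertex lies on at most one horizontal and at most one vertical path, so that vertex lies on some $P_i\cap Q_j$.

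The naive pigeonhole argument therefore fails, and this is the main obstacle. To handle it, pick a witness $s_t\in S\cap V(R_t)$ for each $t$. If some $R_t$ is disjoint from $X$, then $R_t\subseteq K$ and $s_t$ is connected to $P$ in $M-X$, a contradiction. If additionally $X$ misses $P$ itself, $P\subseteq K$ already. The delicate case is when $P$ meets $X$. In that case I would use the paths in $K$ together with the subpaths of $P$ between vertices of $X$, and argue that every vertex of $P\setminus X$ reaches $K$ along the vertical path through it, unless that vertical path is blocked.

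So the cleanest route is a direct count. Let $\alpha$ be the number of horizontal paths and $\beta$ the number of vertical paths hit by $X$. Then $\max(\alpha,\beta)\leq|X|\leq k-1$. I would aim for the inequality that at most $|X|$ of the $R_t$ together with the ``access'' to $P$ can be blocked. For this I would charge each vertex of $X$ to at most one blocked $R_t$ or one blocked access column. If the counting with two-path vertices does not close, I would instead refine $X$ to a minimal separator and use planarity of the mesh: a minimal separator between $P$ and a connected set is a path-like curve crossing the grid, which cannot cut all $k$ paths $R_t$ off from $P$ with fewer than $k$ vertices.

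Once some $s_t$ lies in the same component of $M-X$ as a vertex of $P$, we have a $V(P)$-$S$-path avoiding $X$, which contradicts the choice of $X$. Hence the $k$ disjoint paths exist in $M$, and therefore in $G$.
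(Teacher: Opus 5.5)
The paper states this observation without proof. The real problem is that your argument does not close: the open step is exactly the one you flag as the main obstacle. Your two remedies there cannot be carried out:
\begin{itemize}
\item a charging scheme showing that at most $|X|$ of the $R_t$ can be blocked;
\item the claim that a minimal separator cannot cut all $R_t$ off from $P$ with fewer than $k$ vertices.
\end{itemize}
Both fail because, when the $k$ paths met by $S$ may mix orientations (the literal reading), the statement is false. In the $(3\times 3)$-grid take $k=2$, $P=P_1$, and $S=\{v\}$ with $v$ the crossing vertex of $P_2$ and $Q_2$. Then $S$ meets two distinct paths, yet two disjoint paths cannot both end in a one-element set. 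Here $X=S$ is a separator of size $1<k$ that blocks both $R_t$; in general $\lceil k/2\rceil$ crossing vertices suffice.

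The missing idea is that the hypothesis must be read as ``$S$ meets at least $k$ horizontal paths or at least $k$ vertical paths.'' This is also what the application at the end of the proof of the rainbow-row lemma provides. With that reading, the pigeonhole step you discarded is exactly right. Paths of one orientation are pairwise vertex-disjoint, so each vertex of $X$ lies on at most one of $R_1,\dots,R_k$. Hence some $R_t$ avoids $X$, and it meets $Q_\ell$ (if horizontal) or $P_h$ (if vertical), so it lies in $K$.

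Two smaller points. First, ``since $k\geq\min(n,m)$ we have $|X|<\min(n,m)$'' is a non sequitur. From $|X|\le k-1$ you get $|X|<\min(n,m)$ only if $k\le\min(n,m)$. The printed inequality is a typo: with $k>\min(n,m)$ the statement fails. For example, in the $(2\times m)$-grid with $m\geq 3$, take $P=Q_1$, which has only two vertices, so at most two disjoint paths start on it, while $S$ meets three vertical paths. The application only needs $k\le\min(n,m)$. Say that you are assuming this, rather than deriving it from the reverse inequality.

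Second, the ``delicate case'' where $P=P_a$ meets $X$ needs no separate argument. The path $Q_\ell$ you already chose meets $P_a$, and $V(P_a)\cap V(Q_\ell)\subseteq V(Q_\ell)$ avoids $X$. So $K$ always contains a vertex of $P$; for vertical $P$, use $P_h$ instead. With these corrections the Menger argument is complete in a few lines.
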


The first kind of mesh we will construct will not immediately be homogeneous, but instead host all colours found in the entire mesh in its middle row.

Let $q$, $m$, and $n$ be positive integers with $n$ being even, let $(G,\chi)$ be a $q$-colorful graph, and let $M$ be an $(n \times m)$-mesh such that $P_1, \ldots , P_n$ are the horizontal paths of $M$ and $Q_1, \ldots , Q_m$ are the vertical paths of $M$.
Further, for each $i \in [m-1]$, let $C_i$ be the unique cycle in $Q_i \cup Q_{i+1} \cup P_{\frac{m}{2}} \cup P_{\frac{m}{2}+1}$ and let $M$ be flat witnessed by the rendition $\rho$, with $H$ being the compass of $M$ with respect to $\rho$.
Then we say that $M$ has a \emph{rainbow middle row}, if we have $\chi(H) = \chi(H_i)$ for all $i \in [m-1]$, where $H_i$ is the compass of $C_i$ with respect to $\rho$.
See \zcref{fig:rainbowmesh} for an example of a $(10 \times 12)$-mesh with a rainbow middle row.

\begin{figure}[ht]
    \centering
    \begin{tikzpicture}

        \pgfdeclarelayer{background}
		\pgfdeclarelayer{foreground}
			
		\pgfsetlayers{background,main,foreground}

        \begin{pgfonlayer}{background}
            \pgftext{\includegraphics[width=8cm]{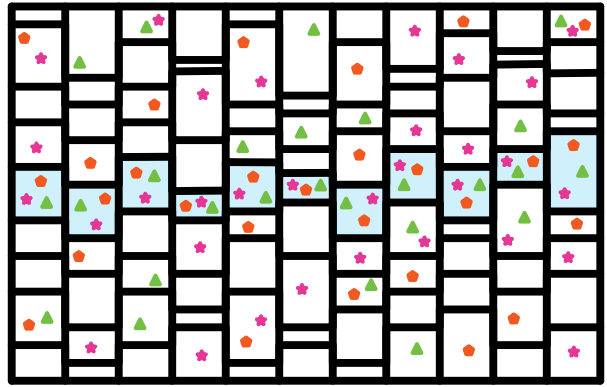}} at (C.center);
        \end{pgfonlayer}{background}
			
        \begin{pgfonlayer}{main}
        \node (C) [v:ghost] {};
        \end{pgfonlayer}{main}
        
        \begin{pgfonlayer}{foreground}
        \end{pgfonlayer}{foreground}

    \end{tikzpicture}
    \caption{An example of a mesh with a rainbow middle row. The middle row is marked in blue.}
    \label{fig:rainbowmesh}
\end{figure}

Note that a mesh with a rainbow middle row is not necessarily homogeneous.
However, as we will see in the next section, it is fairly easy to derive a slightly smaller homogeneous mesh using all the same colours and thus, finding a rainbow middle row is the last big step we need to take towards our main theorem.

Before we prove that we can find such a mesh, we introduce one more function:
\begin{align*}
    \mathsf{d}_{\ref{lem:rainbowrowfinder}}(n,m,q) \coloneqq \mathsf{d}_{\ref{lemma:Tiles}}(q, \max(\nicefrac{n}{2}+m,2m), 2, q(m-2)+1).
\end{align*}

Due to the length of the following proof, we want to point out that the explanation at the beginning of it, together with the illustrations provided should serve to explain all necessary details to convince the reader that our construction produces the desired outcome.
The technical arguments provided are mainly there for the sake of completeness.

\begin{lemma}\label{lem:rainbowrowfinder}
Let $d$, $n$, and $m$ be positive integers and let $q$ be a non-negative integer.
Let $(G,\chi)$ be a $q$-colorful graph, let $m$ be even, let $d \geq \mathsf{d}_{\ref{lem:rainbowrowfinder}}(n,m,q)$, and $M$ be a $(d \times d)$-mesh in $G$ such that $M$ is flat witnessed by the rendition $\rho$.

Then there exists an $(n \times m)$-mesh $M_0 \subseteq M$ that is
\begin{enumerate}
    \item flat witnessed by the rendition $\rho$,

    \item has a rainbow middle row, and

    \item the tangle of $M_0$ is a truncation of the tangle of $M$.
\end{enumerate}

Moreover, there exists an algorithm that takes $d$, $n$, $m$, $(G,\chi)$, $M$, and $\rho$ as input and finds $M_0$ in time $\mathbf{poly}(d)|E(G)|$.
\end{lemma}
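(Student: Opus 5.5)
We will apply \zcref{lemma:Tiles} to $(G,\chi)$ and $M$ with parameters $p \coloneqq \max(\nicefrac{n}{2}+m,2m)$, $b \coloneqq 2$ and $r \coloneqq q(m-2)+1$; this is allowed since $d \geq \mathsf{d}_{\ref{lem:rainbowrowfinder}}(n,m,q)$. The lemma returns colour sets $I_{\mathcal{C}},I_{\mathcal{R}}$ and padded packings $\mathcal{C},\mathcal{R}$, together with their cropped versions $\widehat{\mathcal{C}},\widehat{\mathcal{R}}$ inside $\widehat{M}$. Set $I \coloneqq I_{\mathcal{C}} \cup I_{\mathcal{R}}$. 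The first observation is that $\chi(\mathsf{compass}(\widehat{M})) = I$. The inclusion $\supseteq$ follows from property iii). For $\subseteq$, every vertex of the compass of $\widehat{M}$ lies in some strip of $\widehat{\mathcal{C}}$ or of $\widehat{\mathcal{R}}$: after cropping, the perimeter of $\widehat{M}$ consists of outermost padding paths, so the frames of the strips cover $\widehat{M}$. Where a gap between consecutive strips is not covered, we simply redefine the packing so that consecutive strips share their boundary paths; this does not affect ii).

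We will build $M_0$ only inside $\widehat{M}$, so every colour it sees lies in $I$. It then suffices to make every middle cell $C_i$ of $M_0$ capture every colour of $I$, and in addition to ensure $\chi(\mathsf{compass}(M_0)) \subseteq I$, which is automatic. For each colour $i \in I$, property iii) gives $r = q(m-2)+1$ distinct strips $S$ of one type, each containing a tile $T_{i,S} \in \mathsf{Tiles}^p(S)$ with $i \in \chi(T_{i,S})$.

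Greedily, we then assign to each pair (colour $i$, middle cell index $j \in [m-1]$) a distinct strip $S_{i,j}$ together with a tile of it carrying $i$. This is possible because $r > q(m-2)$: even after the other $\leq q$ colours have claimed up to $m-2$ strips each, a fresh strip remains, so distinctness across all pairs can be arranged.

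The construction of $M_0$ is the main obstacle. The idea is to route the middle row, namely the two horizontal paths $P_{\nicefrac{n}{2}}$ and $P_{\nicefrac{n}{2}+1}$ of $M_0$, as a ``snake'' through the padding infrastructure. The $j$th cell of the middle row is to be a region bounded by two consecutive vertical paths of $M_0$ and the two middle horizontal paths; we arrange that this region contains, for every colour $i$, the whole tile assigned to $(i,j)$.

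Concretely, we order the chosen tiles and travel along buffer paths. Inside each chosen strip we use its left and right $p$-buffers to detour around its tile: the upper middle path runs along the outer side of the tile via the buffer $L$, and the lower one along the opposite side via $R$. The tile-surrounding paths $L^2_Z, R^1_Z$ of the crossing strips close off the boundary of the tile. Since tiles are entirely surrounded by padding, these detours exist. Tiles in row-strips and column-strips are reached by moving along the perimeter-adjacent padding.

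The remaining horizontal paths above and below the middle row, $\nicefrac{n}{2}-1$ of them on each side, and the $m$ vertical paths are then drawn parallel to this snake. For this we use $p \geq \nicefrac{n}{2}+m$ nested buffer paths, so every strip's buffer has room for all $n$ horizontal paths plus the vertical paths crossing it; $p \geq 2m$ leaves room for vertical paths passing between columns.

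Planarity of $\rho$ restricted to $\widehat{M}$ guarantees that the result is an $(n\times m)$-mesh whose middle cells have compasses containing the assigned tiles. Flatness with respect to $\rho$ is inherited because $M_0 \subseteq \widehat{M}$ lies in the same aligned disk.

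Finally, for the tangle: each horizontal and vertical path of $M_0$ meets at least $\min(n,m)$ distinct horizontal or vertical paths of $M$, since the snake traverses many strips. By \zcref{obs:meshconnectivity}, a separation of order below $\min(n,m)$ cannot separate a path of $M_0$ from the majority side of $M$. Hence every separation in $\mathcal{T}_{M_0}$ is oriented the same way in $\mathcal{T}_M$, so $\mathcal{T}_{M_0}$ is a truncation of $\mathcal{T}_M$. All steps consist of linear-time searches repeated $\mathbf{poly}(d)$ times, which gives the claimed running time.
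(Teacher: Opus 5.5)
Your proposal has two genuine gaps.

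\textbf{The colour bound on the compass.} Your claim that $\chi(\mathsf{compass}(\widehat{M}))=I_{\mathcal{C}}\cup I_{\mathcal{R}}$ is false, and the repair you propose breaks property ii) of \zcref{lemma:Tiles}. The strips of the packings are not consecutive. \textbf{Sort} and \textbf{Trim} delete exactly those strips, and shed exactly those buffers, that carry a discarded colour. So $\widehat{M}$ still contains regions that lie between two consecutive column strips and two consecutive row strips and belong to no strip, and these are precisely where discarded colours may sit. If you enlarge strips so that consecutive ones share boundary paths, those colours enter the strips and $\chi(S)\subseteq I_{\mathcal{X}}$ fails. Hence $\chi(\mathsf{compass}(M_0))\subseteq I_{\mathcal{C}}\cup I_{\mathcal{R}}$ is not automatic. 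It must be enforced by routing $M_0$ so that its compass stays inside the union of the strips, which is what the paper does (it passes from one column strip to the next only through buffers of row strips). This also restricts which corridors your snake may use.

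\textbf{The construction.} The construction of $M_0$ is the real content of the lemma, and it is not given; the architecture you sketch also runs into an obstruction. The vertical paths cannot be ``drawn parallel to this snake'': if the horizontal paths follow the snake, the vertical paths are rungs across it, and you never place them. You make the middle-row cells consecutive \emph{segments} of a band of $n$ parallel paths and pre-assign tiles to cells regardless of position. The band must therefore visit all tiles of cell $1$, then all tiles of cell $2$, and so on. \zcref{lemma:Tiles} gives no control over location: one colour may occur only in the leftmost column strips and another only in the rightmost. So the band may have to cross $\widehat{M}$ about $m-1$ times. Each pass must run side by side through buffers of the other packing, which \zcref{lemma:Tiles} only guarantees to be non-empty. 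With $p=\max(\nicefrac{n}{2}+m,2m)$, each of its strips holds only a bounded number of passes of a width-$n$ band, and you give no argument that the routes exist.

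Your count is also off. Insisting on distinct strips for all pairs $(i,j)$ can require more than $r=q(m-2)+1$ strips of one colour when $q\ge 2$. The correct count is per cell: each of the other $m-2$ cells uses at most $q$ tiles. It must also be done on tiles rather than strips, because the $(R,p)$-tile of a column is also a tile of the row $R$.

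The missing idea is to swap roles.
\begin{itemize}
\item Let the $m$ \emph{vertical} paths of $M_0$ form the snake; its width is only $m\le\nicefrac{p}{2}$.
\item Sweep once: up each column strip, then down between consecutive column strips through the row strips.
\item Close with two small $(\nicefrac{n}{2}\times m)$-meshes at the two ends, which supply all $n$ horizontal paths.
\end{itemize}
Each middle-row cell is then a whole lane between consecutive snake paths $P'_k,P'_{k+1}$. Any tile met during the sweep can be dropped on the fly into a lane that still lacks one of its colours, by rerouting $P'_{k+1},\dots,P'_m$ around it through the adjacent buffer. The greedy count finishes the argument: the lanes other than a deficient lane absorb at most $q(m-2)<r$ tiles, so every lane receives every colour.
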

\begin{proof}
This proof consists of a simple construction, which proceeds as follows:
First, we apply \zcref{lemma:Tiles} to find the submesh of $M$ together with the cleaned-up row and column packings promised in that statement.
We then anchor ourselves with an $(\nicefrac{n}{2} \times m)$-mesh in the bottom left corner of this submesh.
From there we first collect colours from all tiles shared by rows and the first column, and all tiles exclusive to the first column by walking through the boundary and greedily picking up a colour from a tile whenever we notice that a part of the middle row of the mesh we are constructing is still missing it.
Once we are at the top, we move slightly further right, using the rightmost half of the right buffer of the first column and the leftmost part of the left buffer of the second column to move down and do the same to the rightmost tiles that are exclusive to the rows, which we missed so far.
From there we again move slightly further to the right and now repeat this process with the second column.
Once we arrive at the rightmost column, we close our construction with another $(\nicefrac{n}{2} \times m)$-mesh (see \zcref{fig:rainbowmeshconstruction}).

\begin{figure}[btp]
    \centering
    \begin{tikzpicture}

        \pgfdeclarelayer{background}
		\pgfdeclarelayer{foreground}
			
		\pgfsetlayers{background,main,foreground}

        \begin{pgfonlayer}{background}
            \pgftext{\includegraphics[width=\textwidth]{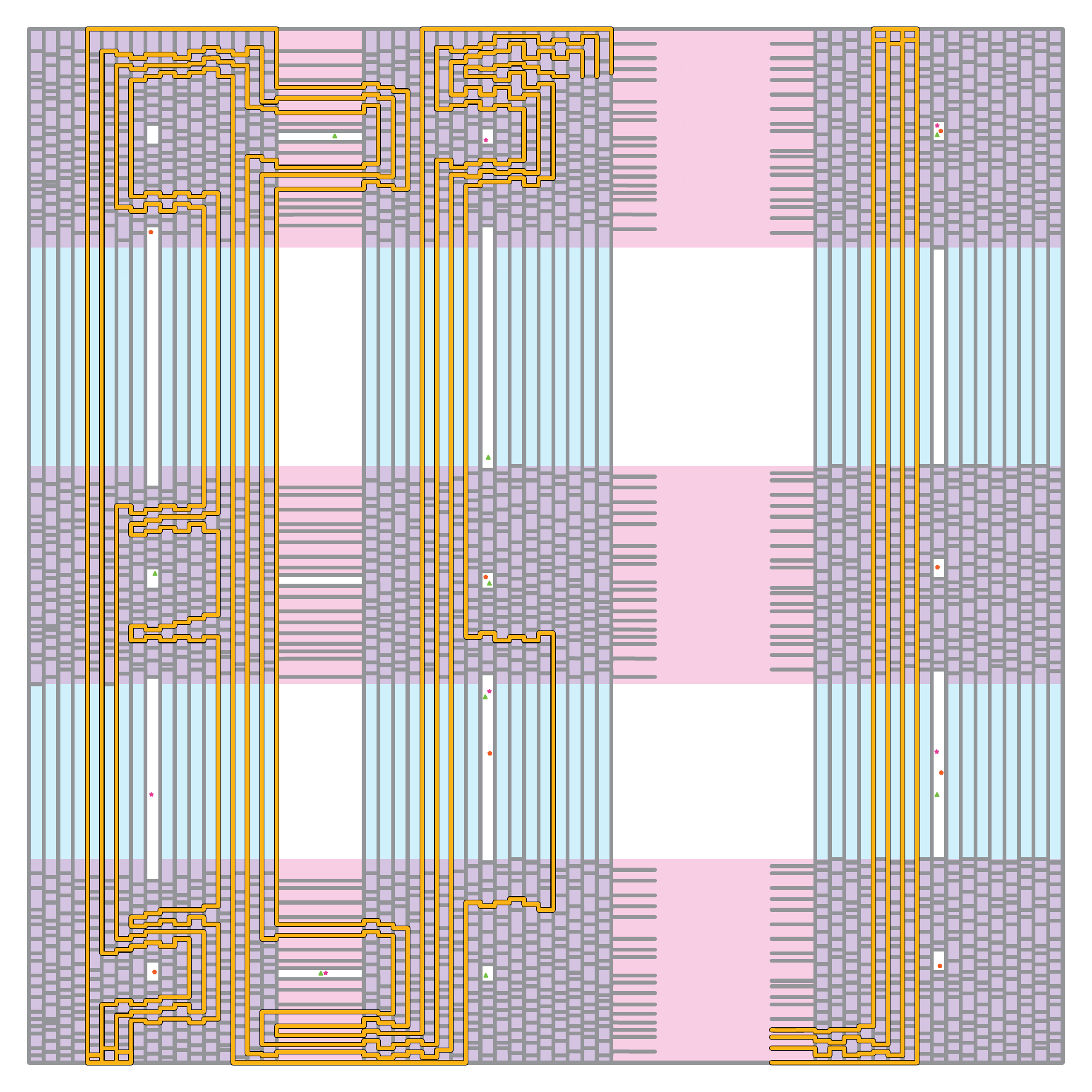}} at (C.center);
        \end{pgfonlayer}{background}
			
        \begin{pgfonlayer}{main}
        \node (C) [v:ghost] {};
        \end{pgfonlayer}{main}
        
        \begin{pgfonlayer}{foreground}
        \end{pgfonlayer}{foreground}

    \end{tikzpicture}
    \caption{An example of the construction we present in the proof of \zcref{lem:rainbowrowfinder}. Here, we construct a $(4 \times 4)$-mesh represented by the orange lines in a larger mesh that hosts 3 colours, represented by the coloured shapes. On the left, the figure shows the progress after walking up from the $(2 \times 4)$-mesh we select in the bottom right, then walking down, and then walking up once more. On the right of the illustration, we close the construction with another $(2 \times 4)$-mesh. \zcref{fig:rainbowmeshconstructiondetail} shows a close-up of this construction that may be more legible to some readers.}
    \label{fig:rainbowmeshconstruction}
\end{figure}

If enough tiles exist with respect to the total number of colours we need to collect in each face of the middle row, it will be easy to prove that this results in a rainbow middle row.
In particular, since our construction only moves through the cleaned-up rows and columns from \zcref{lemma:Tiles}, we can guarantee that no colour present in the area we sweep through will be missing from the middle row.
Showing that the tangle of the new mesh is a truncation of the old one is then a simple matter of applying \zcref{obs:meshconnectivity} appropriately.

Let $P_1^*, \ldots , P_d^*$ be the horizontal paths of $M$ and let $Q_1^*, \ldots , Q_d^*$ be the vertical paths of $M$.

\paragraph{Setting up our paddings and the bottom half of our mesh.}
We first apply \zcref{lemma:Tiles} to our objects, with $r = q(m-2)+1$, $p = \max(\nicefrac{n}{2}+m, 2m)$, and $b = 2$.
Thanks to $d \geq \mathsf{d}_{\ref{lem:rainbowrowfinder}}(n,m,q)$, this yields the following:
Two possibly empty sets $I_{\mathcal{C}},I_{\mathcal{R}}\subseteq [q]$ and two $(p,2)$-padded packings $\mathcal{C}$ of type column and $\mathcal{R}$ of type row such that if $\widehat{M}\coloneqq \mathsf{Crop}(\mathsf{Crop}(M,\mathcal{C}),\mathcal{R})$, $\widehat{\mathcal{C}} \coloneqq \mathsf{Crop}(\mathcal{R},\mathcal{C})$, and $\widehat{\mathcal{R}} \coloneqq \mathsf{Crop}(\mathcal{C},\mathcal{R})$, then the following hold for both $\mathcal{X} \in \{ \mathcal{C},\mathcal{R} \}$
\begin{enumerate}
    \item $\widehat{\mathcal{X}}\neq\emptyset$,
    \item for every $S\in \widehat{\mathcal{X}}$ we have that $\chi(S) \subseteq I_{\mathcal{X}}$, and
    \item for every $i\in I_{\mathcal{X}}$ there exist $\mathcal{Y}\neq \mathcal{Z} \in \{ \widehat{\mathcal{C}},\widehat{\mathcal{R}} \}$ such that there exist at least $r$ distinct strips $S \in \mathcal{Y}$ such that $i\in\chi(T)$ for some $T \in \mathsf{Tiles}^p_{\mathcal{Z}}(S)$.
\end{enumerate}
Since we will have to work with these objects rather actively, we spend some time further specifying them and their contents.
We let $r' \coloneqq |\mathcal{R}|$ and $c' \coloneqq |\mathcal{C}'|$.
Let $\mathcal{C} = \{ S_1, \ldots , S_{c'} \}$ such that for all $i,j \in [c']$ with $i < j$ and $h \in [d]$, we have that $V(P_h^*) \cap V(S_i) \neq \emptyset$ implies that $V(P_\ell^*) \cap V(S_j) = \emptyset$ for all $\ell \in [h,d]$.
In other words, in the natural drawing of our setting (see \zcref{fig:rainbowmeshconstruction} for reference), the column strips $S_1, \ldots , S_{c'}$ are sorted from left to right according to their indices, with $S_1$ being the leftmost column and $S_{c'}$ being the rightmost column.
We index $\mathcal{R} = \{ R_1, \ldots , R_{r'} \}$ analogously with respect to $Q_1^*, \ldots , Q_d^*$, such that the row strips are indexed from the bottom to the top, with $R_{r'}$ being the lowest row and $R_1$ being the highest.

For each $i \in [c']$, we let $\mathcal{F}_i^{\mathrm{c}} = \{ P_{(i-1)(p+2) + 1}, P_{(i-1)(p+2) + 2}, \ldots , P_{i(p+2)} \}$ be the frame of $S_i$, where for each $i',j \in [(i-1)(p+2) + 1, i(p+2)]$ and $h \in [d]$, we have that $V(P_h^*) \cap V(P_{i'}) \neq \emptyset$ implies that $V(P_\ell^*) \cap V(P_j) = \emptyset$ for all $\ell \in [h,d]$, which again just means that we want the paths in the frame of $R_i$ to be indexed from left to right.
We analogously want that for each $i \in [r']$ we have that $\mathcal{F}_i^{\mathrm{r}} = \{ Q_{(i-1)(p+2) + 1}, Q_{(i-1)(p+2) + 2}, \ldots , Q_{i(p+2)} \}$ is the frame of $R_i$ with its paths indexed from bottom to top.
Furthermore, for each $i \in [r']$, let $\mathcal{B}_i^{\mathrm{r},T}$ and $\mathcal{B}_i^{\mathrm{r},B}$ respectively be the left and right buffer of $R_i$, where we identify the left buffer of $R_i$ here with the \textsl{top} buffer of $R_i$, meaning higher indices, and identify the right buffer of $R_i$ with the \textsl{bottom} buffer of $R_i$.

Let $M_B$ be the unique $(\nicefrac{n}{2} \times m)$-mesh in
\[ \bigcup_{i = m+1}^{2m} P_i \cup \bigcup_{i=1}^{\nicefrac{n}{2}} Q_i . \]
Since $p = \max(\nicefrac{n}{2}+m, 2m)$, we in particular know that the $m$ paths in $\mathcal{B}_1^{\mathrm{r},B}$ with the highest indices are all disjoint from $M_B$.
By our choice, it is clear that each of the $m$ vertical paths of $M_B$ is a subpath of a different path of $P_{m+1}, \ldots , P_{2m}$ (see \zcref{fig:cornermesh} for an illustration of this). 

\begin{figure}[ht]
    \centering
    \begin{tikzpicture}

        \pgfdeclarelayer{background}
		\pgfdeclarelayer{foreground}
			
		\pgfsetlayers{background,main,foreground}

        \begin{pgfonlayer}{background}
            \pgftext{\includegraphics[width=10cm]{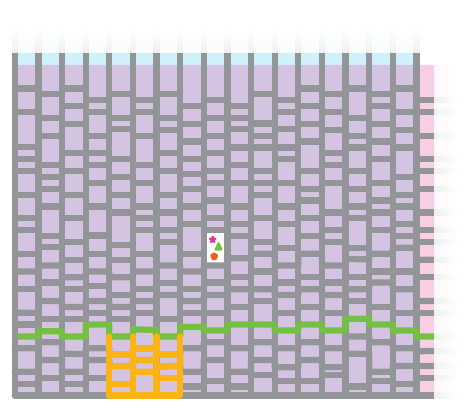}} at (C.center);
        \end{pgfonlayer}{background}
			
        \begin{pgfonlayer}{main}
        
        \node (C) [v:ghost] {};

        \node (Q) at (-5.2,-2.8) [draw=none] {$Q_{m+1}$};
        \node (P1) at (-2.7,-4.35) [draw=none] {$P_1'$};
        \node (P2) at (-2.2,-4.35) [draw=none] {$P_2'$};
        \node (P3) at (-1.7,-4.35) [draw=none] {$P_3'$};
        \node (P4) at (-1.2,-4.35) [draw=none] {$P_4'$};
            
        \end{pgfonlayer}{main}
        
        \begin{pgfonlayer}{foreground}
        \end{pgfonlayer}{foreground}

    \end{tikzpicture}
    \caption{An illustration of the mesh we use to initialise our construction in the bottom left. Here the orange parts constitute the mesh and the vertical paths of this mesh are slightly extended upwards to meet the path $Q_{m+1}$, which is marked in green.}
    \label{fig:cornermesh}
\end{figure}

We let $\mathcal{P}' = \{ P_1', \ldots , P_m' \}$ be $m$ paths that we will continuously update throughout the coming construction by elongating them.
To initialise these paths, we let $P_i'$ be the $V(Q_1)$-$V(Q_{m+1})$-path within $P_{i+m}$ for each $i \in [m]$ (see \zcref{fig:cornermesh}).
By construction $P_1', \ldots , P_m'$ cover all verticals paths of $M_B$.
We refer to the endpoints of the paths in $\mathcal{P}'$ that are found in $V(Q_1)$ as their \emph{fixed endpoints} and from this point onwards during this proof, whenever we refer to endpoints of the paths in $\mathcal{P}'$ we only refer to their non-fixed endpoints, unless otherwise stated.

Our goal is to extend these paths through the infrastructure provided by the buffers we just meticulously described.
As we do this, we will pass by tiles generated by $\mathcal{R}$ and $\mathcal{C}$, which we will collect between two paths with consecutive indices (see \zcref{fig:rainbowmeshconstruction,fig:rainbowmeshconstructiondetail}).
Because of this, we also want to set up $m-1$ sets $I_1, \ldots , I_{m-1}$ into which we will greedily collect colours from the tiles we pick up along the way until we have $I_i = I_\mathcal{C} \cup I_\mathcal{R}$ for all $i \in [m-1]$.
These later let us verify that we constructed a rainbow row.

We now proceed to describe the two slightly different steps our construction process undertakes in alternation until we have arrived in the top right of $\widehat{M}$, where we will attach the top half of our mesh.
The following constructions are stated for arbitrary $i \in [c']$ (or $i \in [c'-1]$ in the case of the second step), but the reader may as well imagine that $i = 1$ to ease comprehension.

\paragraph{Walking up.}
Let $i \in [c']$ and assume that that the endpoint of $P_j'$ is found in $V(Q_{m+1}) \cap V(P_{(i-1)(p+2) + m + j})$ for each $j \in [m]$.
Let $T_1, \ldots T_{2r' - 1} \in \mathsf{Tiles}_{\mathcal{R}}^k(S_i)$ be indexed in the natural way from bottom to top.\footnote{$T_1$ is the $(R_1,p)$-tile of $S_i$, $T_2$ is the $(R_1,R_2,p)$-tile of $S_i$, $T_3$ is the $(R_2,p)$-tile of $S_i$, etc.\ }

We now \textsl{walk up} along $S_i$ to extend the paths in $\mathcal{P}'$ consecutively \textsl{collecting} the tiles $T_1, \ldots , T_{2r' - 1}$.
Let $j \in [2r' - 1]$ and assume that all tiles with an index in $[j-1]$ have been collected already.
If $j$ is odd, we take this to mean that the paths in $\mathcal{P}'$ have been extended upwards to have endpoints in $V(Q_{\nicefrac{1}{2}(j-1)(p+2) + m + 1})$. 
Otherwise, if $j$ is even, the paths in $\mathcal{P}'$ must have been extended to have their endpoints in $V(Q_{\nicefrac{1}{2}(j-2)(p+2) + p + m + 3})$.

If we have $I_k \subseteq \chi(T_j)$ for all $k \in [m-1]$, then for all $h \in [m]$ we extend $P_h'$ along $P_{(i-1)(p+2) + m + h}$ to move their endpoint to lie in $V(Q_{\nicefrac{1}{2}(j-1)(p+2) + p + m + 3})$, if $j$ is odd, and to lie in $V(Q_{\nicefrac{j}{2}(p+2) + m + 1})$, if $j$ is even.
This ensures that in our assumptions on the endpoints of the paths in $\mathcal{P}'$ above we did not make a mistake, if this case occurs.

So we may instead assume that there exists a $k \in [m-1]$ such that $\chi(T_j) \setminus I_k \neq \emptyset$.
We pick such a $k$ arbitrarily and update $I_k$ to be $I_k \cup \chi(T_j)$.
For each $h \in [k]$, we then extend $P_h'$ along $P_{(i-1)(p+2) + m + h}$ as above such that their endpoint lies in $V(Q_{\nicefrac{1}{2}(j-1)(p+2) + p + m + 3})$, if $j$ is odd, and lies in $V(Q_{\nicefrac{j}{2}(p+2) + m + 1})$, if $j$ is even.
However for each $h \in [k+1,m]$, which notably is at least one index, we distinguish two very similar cases.
See \zcref{fig:rainbowmeshconstruction,fig:rainbowmeshconstructiondetail} for illustrations.

\begin{figure}[ht]
    \centering
    \begin{tikzpicture}

        \pgfdeclarelayer{background}
		\pgfdeclarelayer{foreground}
			
		\pgfsetlayers{background,main,foreground}

        \begin{pgfonlayer}{background}
            \pgftext{\includegraphics[width=\textwidth]{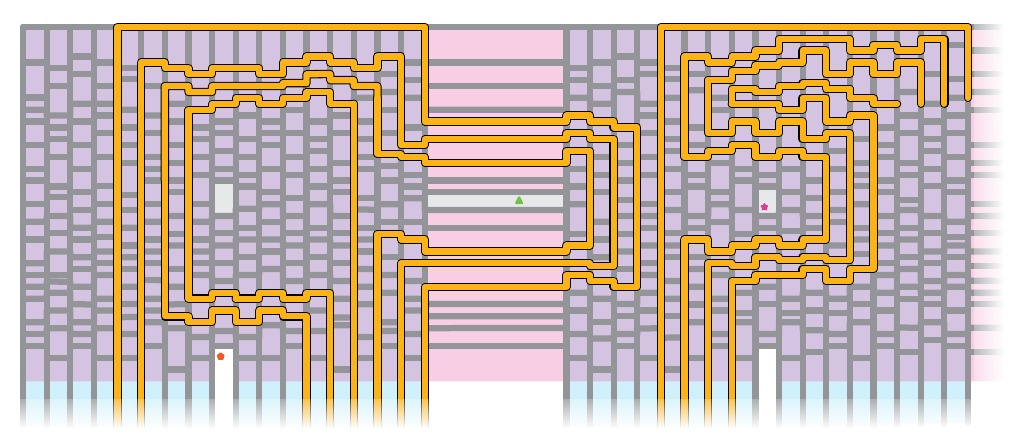}} at (C.center);
        \end{pgfonlayer}{background}
			
        \begin{pgfonlayer}{main}
        \node (C) [v:ghost] {};
        \end{pgfonlayer}{main}
        
        \begin{pgfonlayer}{foreground}
        \end{pgfonlayer}{foreground}

    \end{tikzpicture}
    \caption{A close-up view of the top left part of the illustration in \zcref{lem:rainbowrowfinder}. Here, we can see three situations that cover most of the interesting possibilities that happen in our construction. On the left, a tile is circumnavigated during the walking-up stage, since it does not contain any colours. In the middle, a tile is captured by the rightmost face of the mesh we are constructing during the walking-down stage. Finally, on the right, we capture another tile whilst walking up, this time in the leftmost face of the mesh we are constructing.}
    \label{fig:rainbowmeshconstructiondetail}
\end{figure}

If $j$ is odd, we extend $P_h'$ first along $P_{(i-1)(p+2) + m + h}$ to meet $Q_{\nicefrac{1}{2}(j-1)(p+2) + m + 1 + (m - h)}$, then follow this path until we meet $P_{(i-1)(p+2) + p + 2 + h}$ (which is contained in the right buffer of $S_i$).
From there we follow $P_{(i-1)(p+2) + p + 2 + h}$ until we meet $Q_{\nicefrac{1}{2}(j-1)(p+2) + p + 2 + h}$ (which is contained in the top buffer of $R_{\nicefrac{1}{2}(j+1)}$), and then rejoin $P_{(i-1)(p+2) + m + h}$ to finally fix our new endpoint for $P_h'$ to be the first vertex of $V(Q_{\nicefrac{1}{2}(j-1)(p+2) + p + m + 3})$ we meet.

Otherwise, if $j$ is even, we proceed similarly, extending $P_h'$ by starting first along $P_{(i-1)(p+2) + m + h}$, switching onto $Q_{\nicefrac{1}{2}(j-2)(p+2) + p + m + 3 + (m - h)}$ (which lies in the top buffer of $R_{\nicefrac{j}{2}}$), going from there to meet $P_{(i-1)(p+2) + p + 2 + h}$, until we meet $Q_{\nicefrac{j}{2}(p+2) + h}$ (which is found in the bottom buffer of $R_{\nicefrac{j}{2}+1}$), which allows us to finally rejoin $P_{(i-1)(p+2) + m + h}$ and choose our new endpoint for $P_h'$ as the first vertex of $V(Q_{\nicefrac{j}{2}(p+2) + m + 1})$ we meet.

In the end, we will have extended $\mathcal{P}'$ all the way up such that their endpoints all lie in $V(Q_{(r'-1)(p+2) + p + m + 3})$.
This prepares us neatly for the next step of our construction, which we will alternate with the second step to construct the mesh we ultimately desire.

\paragraph{Walking down.}
Let $i \in [c'-1]$ and assume that the endpoint of $P_j'$ is found in $V(Q_{(r'-1)(p+2) + p + m + 3}) \cap V(P_{(i-1)(p+2) + m + h})$ for each $j \in [m]$.
The second step of our construction is extremely similar to the first in spirit and lands us back in a position to repeat the first step.
Our goal here is to collect the private tiles belonging to our rows that are awkwardly stuck between two columns.

We let $T_1, \ldots , T_{r'}$ be the collection of $(S_i,S_{i+1},p)$ tiles for the elements of $\mathcal{R}$, such that they are indexed in the natural way from the bottom to the top, with $T_j$ being found in $R_j$ for each $j \in [r']$.

Our first task will be to turn around the paths in $\mathcal{P}'$ to face downwards.
Thus, for each $j \in [m]$ we start by walking along $P_{(i-1)(p+2) + m + j}$ and then extend $P_j'$ along $Q_{(r'-1)(p+2) + p + m + 3 + (m-h)}$ (which is found in the top buffer of $R_{r'}$) until we reach $P_{(i-1)(p+2) + p + 3 + m + (m-h)}$ (which is found in the right buffer of $S_i$) and from there we go back down and make the first vertex in $V(Q_{(r'-1)(p+2) + p + m + 2})$ we see our new endpoint for $P_j'$.

We now walk \textsl{down} along $S_i$ and in some sense also $S_{i+1}$ to collect $T_{r'}, \ldots , T_1$ in the given order.
Since there is only one type of tile in this step, our job is a little easier.
Let $j \in [2,r']$ and assume all tiles with an index \textsl{higher} than $j$ have already been collected.
We take this to mean that the paths in $\mathcal{P}'$ have their endpoints in $V(Q_{(j-1)(p+2) + p + m + 2})$.

If $I_k \subseteq \chi(T_j)$ for all $k \in [m-1]$, then for all $h \in [m]$ we extend $P_h'$ along $P_{(i-1)(p+2) + p + 3 + m + (m-h)}$ until we reach $V(Q_{(j-2)(p+2) + p + m + 2})$.
Otherwise, we let $k \in [m-1]$ be chosen arbitrarily such that $\chi(T_j) \setminus I_k$ and update $I_k$ to be $I_k \cup \chi(T_j)$.
For each $h \in [k+1,m]$, we also extend $P_h'$ along $P_{(i-1)(p+2) + p + 3 + m + (m-h)}$ until we reach $V(Q_{(j-2)(p+2) + p + m + 2})$.

For the remaining $h \in [k]$, we elongate $P_h'$ first along $P_{(i-1)(p+2) + p + 3 + m + (m-h)}$ until we reach $Q_{(j-1)(p+2) + p + 3 + (m-h)}$ (found in the top buffer of $R_j$).
Here we switch to the path $P_{i(p+2) + 1 + (m-h)}$ (which is found in the left buffer of $S_{i+1}$), which we take until we see $Q_{(j-1)(p+2) + m + 1 + (m-h)}$ (which is found in the bottom buffer of $R_j$).
From there we again follow $P_{(i-1)(p+2) + p + 3 + m + (m-h)}$ until we reach $V(Q_{(j-2)(p+2) + p + m + 2})$ to establish the new endpoint of $P_h'$.
See \zcref{fig:rainbowmeshconstruction,fig:rainbowmeshconstructiondetail} for illustrations.

The case in which $j=1$ and we want to collect $T_1$ proceeds analogously, but stops each path in $\mathcal{P}'$ at $V(Q_m)$ instead of $V(Q_{(j-2)(p+2) + p + m + 2})$, which would be ill-defined here.
Here, we want to turn around our paths once more to reach the initial conditions necessary to perform the walking-up part of our construction again.
Thus, once we have collected all tiles and all of the endpoints of the paths in $\mathcal{P}'$ are found in $V(Q_m)$, we extend the paths in $\mathcal{P}'$ one more time as follows.

For each $h \in [m]$, we move $P_h'$ further along $P_{(i-1)(p+2) + p + 3 + m + (m-h)}$ until we meet $Q_{1 + (m-h)}$, which allows us to move to $P_{i(p+2) + m + h}$.
From there we follow the path until we see $V(Q_{m+1})$, which is the home of the new endpoint of $P_h'$.
This concludes the instructions for the walking-down step of our construction.

\paragraph{Attaching the top of the mesh.}
At this point, we may describe the construction of most of our mesh simply as taking $M_B$, initialising $\mathcal{P}'$, then walking up and down in an alternating fashion until we have walked up the column $S_{c'}$.
Thus, for each $i \in [m]$, we now know that the endpoint of $P_i'$ is found in $V(Q_{(r'-1)(p+2) + p + m + 3}) \cap V(P_{(c'-1)(p+2) + m + i})$.
We may now take the unique $(\nicefrac{n}{2} \times m)$-mesh $M_T$ in
\[ \bigcup_{i = (c'-1)(p+2) + m + 1}^{(c'-1)(p+2) + 2m} P_i \cup \bigcup_{i=(r'-1)(p+2) + p + m + 3}^{(r'-1)(p+2) + p + m + \nicefrac{n}{2} + 2} Q_i \]
and note that the endpoints of $\mathcal{P}'$ are all found in the horizontal path of $M_T$ that is a subpath of $Q_{(r'-1)(p+2) + p + m + 3}$.
It is therefore easy to observe that $M' = M_B \cup M_T \cup \bigcup_{i=1}^m P_i'$ is an $(n \times m)$-mesh in which the vertical paths $P_1'', \ldots , P_m''$ of $M'$ are such that for each $i \in [m]$ we have $P_i' \subseteq P_i''$ and the horizontal paths $Q_1'', \ldots , Q_n''$ of $M'$ are similarly contained in $Q_1, \ldots , Q_{\nicefrac{n}{2}}, Q_{(r'-1)(p+2) + p + m + 3}, \ldots , Q_{(r'-1)(p+2) + p + m + \nicefrac{n}{2} + 2}$.
Furthermore, the compass of $M'$ with respect to $\rho$ is clearly contained in $\bigcup_{i=1}^{c'} S_i \cup \bigcup_{i=1}^{r'} R_i$ by construction and thus we have $\chi(\mathsf{compass}(M')) \subseteq I_\mathcal{C} \cup \mathcal{R}$ according to the properties which \zcref{lemma:Tiles} guarantees for us.

\paragraph{Tasting the rainbow.}
Next, let us establish the appropriate boundaries for the middle row of $M'$.
For each $i \in [m-1]$, we let $C_i$ be the unique cycle $C_i$ in $P_i'' \cup P_{i+1}'' \cup Q_{\nicefrac{n}{2}}'' \cup Q_{\nicefrac{n}{2}+1}''$.
According to our construction, we observe that $I_i \subseteq \chi(\mathsf{int}(C_i))$ for all $i \in [m-1]$.
Thus we must confirm that $I_i = I_\mathcal{C} \cup I_\mathcal{R}$ to establish that $M'$ does indeed have a rainbow middle row.

Suppose that this is not true for some $i \in [m-1]$ and let $j \in (I_\mathcal{C} \cup I_\mathcal{R}) \setminus I_i$.
By our construction, we only ever collect a tile into the interior of a cycle from $C_1, \ldots , C_{m-1}$ if it contains a colour we have not yet collected in the particular set associated with that cycle.
At worst $|I_\mathcal{C} \cup I_\mathcal{R}| = q$ and thus our process swallows up at most $q(m-1)$ tiles if it really had to collect one tile per cycle per colour.
If we ignore our deficient cycle $C_i$, the other cycles can thus have at worst gathered $q(m-2)$ tiles amongst themselves.
However, according to what \zcref{lemma:Tiles} guarantees for us, each colour is present in at least $q(m-2)+1$ pairwise distinct tiles and thus we must have collected $j$ into $I_i$, a contradiction.
We therefore have actually built an $(n \times m)$-mesh with a rainbow middle row.

\paragraph{Making sure the tangles agree.}
Recall that $\widehat{M}$ is a subwall of $M$ and let $n' = \min(n,m)$.
Suppose that $\mathcal{T}_{M'}$ is not a truncation of $\mathcal{T}_M$.
Thus there exists a separation $(A,B)$ of order less than $n'$ in $\mathcal{T}_{M'}$ such that $(B,A) \in \mathcal{T}_M$.
Note that $B$ contains a horizontal path $Q''$ and a vertical path $P''$ of $M'$, since $(A,B) \in \mathcal{T}_{M'}$.
Furthermore, $A$ entirely contains a horizontal path $Q$ of $M$ and a vertical path $P$ of $M$, since $(B,A) \in \mathcal{T}_M$.
Clearly, by construction of $M'$, $V(Q'' \cup P'')$ intersects at least $n'$ pairwise distinct horizontal and vertical paths of $M$.
Thus according to \zcref{obs:meshconnectivity}, there exist $n'$ disjoint paths between $V(Q'' \cup P'')$ and $V(Q \cup P)$, contradicting the fact that $(A,B)$ is a separation of order less than $n'$.
This concludes our proof.
\end{proof}

\section{Proof of \texorpdfstring{\zcref{thm:IntroHomoMuralis}}{Theorem 1.1}}\label{sec:ProofThm1}
We are now almost ready to prove our main theorem.
First, we must take the second step from a mesh with a rainbow middle row to a homogeneous mesh.
This turns out to not be too difficult.

\begin{lemma}\label{lemma:homogeneousfromrainbowrow}
Let $n \geq 1$ and $q$ be non-negative integers.
Let $(G,\chi)$ be a $q$-colorful graph and let $M$ be a $(2n \times (n^2 - n))$-mesh in $G$ such that $M$ is flat witnessed by the rendition $\rho$.

Then there exists a homogeneous $n$-mesh $M_0 \subseteq M$ that is flat witnessed by $\rho$ and whose the tangle is a truncation of the tangle of $M$.

Moreover, there exists an algorithm that takes $n$, $(G,\chi)$, $M$, and $\rho$ as input and finds $M_0$ in time $\mathbf{poly}(d)|E(G)|$.
\end{lemma}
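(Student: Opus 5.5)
The lemma takes as input a mesh $M$ that is assumed to have a rainbow middle row; this is how it is used after \zcref{lem:rainbowrowfinder}. Write $P_1,\dots,P_{2n}$ for its horizontal paths and $Q_1,\dots,Q_{n^2-n}$ for its vertical paths. Let $F_1,\dots,F_{n^2-n-1}$ be the faces of the middle row, i.e.\ the interiors of the cycles $C_j$ bounded by $Q_j,Q_{j+1},P_n,P_{n+1}$. By the rainbow property, $\chi(\mathsf{int}(C_j))=\chi(\mathsf{compass}(M))=:I$ for every $j$.

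The plan is to build a new $n$-mesh $M_0$ with the following shape. Its horizontal paths are segments of the $2n$ horizontal paths of $M$, and its vertical paths weave across the middle row so that every brick of $M_0$ swallows at least one full middle-row face $F_j$. Once that holds, every brick $C$ of $M_0$ satisfies $I\supseteq\chi(\mathsf{compass}(M_0))\supseteq\chi(\mathsf{int}(C))\supseteq\chi(F_j)=I$. The homogeneity condition for arbitrary cycles of $M_0$ then follows, since each such cycle encloses at least one brick.

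\textbf{Construction.} $M_0$ needs $(n-1)^2$ bricks, arranged in $n-1$ rows. Split the middle row into $n-1$ consecutive blocks of roughly $n$ faces each; the count $n^2-n$ is chosen so that $n-1$ blocks, each with about $n$ vertical paths, fit. Block $t$ is devoted to brick row $t$ of $M_0$.

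For the horizontal paths of $M_0$, take $n$ horizontal paths of $M$, say $P_1,\dots,P_{n-1}$ together with the middle paths (or some similar choice). The vertical paths $Q'_1,\dots,Q'_n$ of $M_0$ are routed from bottom to top. Within brick row $t$, i.e.\ between the $t$-th and $(t+1)$-st chosen horizontal paths, the paths $Q'_1,\dots,Q'_n$ run along distinct vertical paths of $M$ lying in block $t$. There they cross the middle row, so that consecutive $Q'_h,Q'_{h+1}$ enclose a middle-row face between $P_n$ and $P_{n+1}$. Between brick rows, the vertical paths shift horizontally. They do this along segments of the top and bottom halves of $M$, namely the $n-1$ horizontal paths above and the $n-1$ below the middle row, in the same staircase fashion as the walking-up and walking-down manoeuvres in \zcref{lem:rainbowrowfinder}.

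Concretely, I would realise brick row $t$ as a ``hairpin'' descending into and rising out of the middle row, or alternatively as a snake: the band of $M_0$ dips down to the middle row at block $t$ and climbs back. Each of the $n$ vertical strands uses its own horizontal track, given by the $n-1$ paths above the middle row plus one more, so the strands stay disjoint. Planarity of $M$ inside $\rho$ makes the cyclic order of the strands consistent, so the union is an $n$-mesh. By construction $M_0\subseteq M$, and hence its compass lies in the compass of $M$. Flatness under $\rho$ is inherited, because $M_0$ lives in the same $\rho$-aligned disk and its perimeter still separates it from the vortex.

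\textbf{Tangle.} I would copy the argument from \zcref{lem:rainbowrowfinder}. A horizontal plus a vertical path of $M_0$ meet at least $n$ distinct horizontal or vertical paths of $M$, so \zcref{obs:meshconnectivity} gives $n$ disjoint linkages to any horizontal or vertical path of $M$. This shows that $\mathcal{T}_{M_0}\subseteq\mathcal{T}_M$. The algorithm is just this explicit routing, which takes linear time per path.

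\textbf{Main obstacle.} The hard part is the exact routing bookkeeping: fitting $n$ disjoint strands through only $n-1$ horizontal tracks on each side of the middle row, and checking that every brick, in every brick row, really contains a whole face $F_j$ rather than only part of one. I would first try the snake layout, in which $M_0$'s brick rows are laid end to end along the middle row and folded using the top and bottom halves. I would use the $n(n-1)$ budget as $n-1$ bricks times $n$ rows of vertical paths, and fall back to adjusting which paths serve as horizontal tracks if the count is off by one.
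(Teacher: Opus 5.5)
\textbf{Verdict: genuine gap.} Several parts of your proposal match the paper's proof:
\begin{itemize}
\item you read the hypothesis correctly: the lemma is only true, and only used, when $M$ has a rainbow middle row;
\item you reduce homogeneity to ``every brick of $M_0$ contains a whole middle-row face $F_j$'';
\item your snake picture and your count of $n-1$ blocks of $n$ vertical paths are the paper's;
\item your tangle argument via \zcref{obs:meshconnectivity} is the paper's.
\end{itemize}
The gap is that the routing, which is the entire content of this lemma, is never fixed, and the one concrete version you write down fails. Suppose the horizontal paths of $M_0$ are segments of $P_1,\dots,P_{n-1}$ (plus the middle paths), and in brick row $t$ the strands run along vertical paths of $M$ between the $t$-th and $(t+1)$-st of them. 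Then for $t\le n-2$ that brick row lies above $P_n$ and contains no $F_j$, so ``there they cross the middle row'' cannot happen. Your ``main obstacle'' paragraph then leaves open exactly what needs proof: $n$ strands versus $n-1$ tracks, the off-by-one, and whether bricks contain whole faces.

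The snake works once \emph{all} horizontal paths of $M_0$ are short segments of the two middle paths. Let $B_1,\dots,B_{n-1}$ be blocks of $n$ consecutive vertical paths of $M$. Take the segments of $P_{n+1}$ and $P_n$ over $B_1$, which form a $(2\times n)$-mesh of rainbow faces. Then take the segment of $P_{n+1}$ over $B_2$, of $P_n$ over $B_3$, and so on alternately.

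Between consecutive horizontal paths the strands fold alternately over and under. Number strand $h$ by its position in $B_1$. It leaves the current block, travels along $P_h$ when folding over and along $P_{n+h}$ when folding under, and then traverses the middle row inside the next block.
\begin{itemize}
\item Each fold reverses the left-to-right order of the strands. This is what makes the arches (resp.\ U-turns) nested and pairwise disjoint.
\item The innermost strand uses the middle path itself as its track; this is your ``one more'' track, and it is why $n$ strands fit.
\item The single middle-row face between consecutive blocks is a spacer lying outside $M_0$.
\item Each brick created by a fold is the band between two consecutive nested arches. It contains the whole face of the new block lying between their descending (resp.\ ascending) segments.
\end{itemize}
This uses $n+(n-2)n=n^2-n$ vertical paths and all $2n$ horizontal paths of $M$, exactly the dimensions in the statement. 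Without this (or an equivalent explicit) routing, your proposal describes the target object rather than proving that it exists.
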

\begin{proof}
Our proof for this proceeds in much the same way as the proof of \zcref{lem:rainbowrowfinder}.
In lieu of a detailed explanation in the beginning, we point to \zcref{fig:homomeshfromrainbow}, which shows off our relatively simple construction for a small example.

\begin{figure}[ht]
    \centering
    \begin{tikzpicture}

        \pgfdeclarelayer{background}
		\pgfdeclarelayer{foreground}
			
		\pgfsetlayers{background,main,foreground}

        \begin{pgfonlayer}{background}
            \pgftext{\includegraphics[width=8cm]{Figures/RainbowConstruction.pdf}} at (C.center);
        \end{pgfonlayer}{background}
			
        \begin{pgfonlayer}{main}
        \node (C) [v:ghost] {};
        \end{pgfonlayer}{main}
        
        \begin{pgfonlayer}{foreground}
        \end{pgfonlayer}{foreground}

    \end{tikzpicture}
    \caption{An example for the construction presented in the proof of \zcref{lemma:homogeneousfromrainbowrow} for the case in which $n=4$. The solid horizontal lines in blue represent the horizontal paths in the finished homogeneous mesh. For the sake of simplicity we represent this construction with a $(8 \times 12)$-grid, but this does not affect the details of the construction.}
    \label{fig:homomeshfromrainbow}
\end{figure}

Let $n' = n^2 - n$, let $P_1, \ldots , P_{n'}$ be the vertical paths of $M$ (where we consider $P_1$ to be the leftmost path), and let $Q_1, \ldots , Q_{2n}$ be the horizontal paths of $M$ (where we consider $Q_1$ to be the topmost path).
We start by letting $M'$ be the unique $(2 \times n)$-mesh in $\bigcup_{i=1}^n P_1 \cup Q_n \cup Q_{n+1}$.
Note that $M'$ is clearly homogeneous, since it consists entirely of a subset of the rainbow row of $M$.
Let $P_1', \ldots , P_n'$ be the vertical paths in $M'$.
We will now iteratively update these paths and $M'$ in much the same way as we updated the paths in the proof of \zcref{lem:rainbowrowfinder}.
Similar to the situation there, we consider the endpoints of $\mathcal{P}' = \{ P_1, \ldots , P_n' \}$ in $Q_{n+1}$ to be \emph{fixed} and will not change them during our construction.
Thus, for the remainder of this proof, whenever we mention an endpoint of a path in $\mathcal{P}'$ we mean its unique non-fixed endpoint.

We divide our construction into two analogous steps.
In the first, we construct a row of odd index and in the second we construct a row of even index within $M'$.

\paragraph{Folding over.}
Let $i \in [3,n]$ be odd.
We assume that for each $j \in [n]$ the endpoint of $P_j'$ is currently found in the intersection of $V(Q_n)$ and $V(P_{(i-3)n + j})$, and that the mesh $M'$ we have constructed so far is homogeneous.

For each $j \in [n]$, we now extend $P_j'$ as follows.
First, we follow along $P_{(i-3)n + j}$ until we meet $Q_j$, which then allows us to move straight to $P_{(i-2)n + 1 + (n-j)}$ and down to $Q_{n+1}$, where we find our new endpoint.
This then allows us to find a new row for $M'$ in $Q_{n+1}$ and since this captures a cycle of the rainbow row of $M$ between any pair of paths in $\mathcal{P}'$ with consecutive indices, our slightly grown $M'$ remains homogeneous.

\paragraph{Folding under.}
Let $i \in [3,n]$ be even.
We assume that for each $j \in [n]$ the endpoint of $P_j'$ is currently found in the intersection of $V(Q_{n+1})$ and $V(P_{(i-3)n + 1 + (n - j)})$, and that the mesh $M'$ we have constructed so far is homogeneous.

For each $j \in [n]$, we now extend $P_j'$ as follows.
First, we follow along $P_{(i-3)n + 1 + (n-j)}$ until we meet $Q_{n+j}$.
This allows us to move straight to $P_{(i-2)n + j}$ and up to $Q_n$.
The new row for $M'$ is found in $Q_n$ and this preserves homogeneity for $M'$ for the same reasons as mentioned above.

\paragraph{Analysing our construction.}
We now construct our desired $M_0$ by first starting with the $(2 \times n)$-mesh mentioned at the start and then iteratively folding over, then under, and repeating this until we have extended our mesh to have square dimensions.
Note that each iteration of our process eats up $n$ faces of the rainbow grid, since we lose one face each time as a spacer.
Thus, we start with $n$ faces to initialise and perform $n-2$ iterations eating up $n$ faces each.
This means we need $n-1 + (n-2)n = n^2 - (n + 1)$ rows, which necessitates $M$ initially having width $n^2 - n$.
It is also easy to see that we need $2n$ rows in $M$ to perform our construction.
This justifies the dimensions we chose for $M$.

The argument that ensures that the tangle of $M_0$ is a truncation of the tangle of $M$ is now entirely analogous to what we presented at the end of the proof of \zcref{lem:rainbowrowfinder}.
This concludes our proof.
\end{proof}

From this we can derive the following corollary using \zcref{lem:rainbowrowfinder}, where we first define the function
\begin{align*}
    \mathsf{d}_{\ref{cor:almostmainthm}}(\ell,q) \coloneqq  \mathsf{d}_{\ref{lem:rainbowrowfinder}}(2\ell, \ell^2 - \ell, q) .
\end{align*}
We note that the following is essentially already as close to \zcref{thm:IntroHomoMuralis} as one really needs to be to apply our result in practice.
The remaining gap is closed right after this result.

\begin{corollary}\label{cor:almostmainthm}
Let $d$, and $\ell$ be positive integers and let $q$ be a non-negative integer.
Let $(G,\chi)$ be a $q$-colorful graph, let $d \geq \mathsf{d}_{\ref{cor:almostmainthm}}(\ell,q)$, and let $M$ be a $(d \times d)$-mesh in $G$ such that $M$ is flat witnessed by the rendition $\rho$.

Then there exists a homogeneous $\ell$-mesh $M_0 \subseteq M$ that is flat witnessed by $\rho$ and whose tangle is a truncation of the tangle of $M$.

Moreover, there exists an algorithm that takes $d$, $\ell$, $(G,\chi)$, $M$, and $\rho$ as input and finds $M_0$ in time $\mathbf{poly}(d)|E(G)|$.
\end{corollary}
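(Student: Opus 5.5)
The corollary follows by composing \zcref{lem:rainbowrowfinder} with \zcref{lemma:homogeneousfromrainbowrow}. Set $n \coloneqq 2\ell$ and $m \coloneqq \ell^2-\ell$. Since $\ell^2-\ell=\ell(\ell-1)$ is a product of consecutive integers, $m$ is even, as \zcref{lem:rainbowrowfinder} requires. (For $\ell=1$ we get $m=0$; I would treat $\ell\le 2$ separately by replacing $\ell$ with $3$, since a homogeneous $3$-mesh contains homogeneous smaller ones; alternatively I would note that the paper's lemmas implicitly assume $m\ge 2$.) Because $d\ge \mathsf{d}_{\ref{cor:almostmainthm}}(\ell,q)=\mathsf{d}_{\ref{lem:rainbowrowfinder}}(2\ell,\ell^2-\ell,q)$, \zcref{lem:rainbowrowfinder} applies to $M$. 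It yields a $(2\ell\times(\ell^2-\ell))$-mesh $M_1\subseteq M$ with three properties: $M_1$ is flat witnessed by $\rho$, it has a rainbow middle row, and its tangle is a truncation of $\mathcal{T}_M$.

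Next I apply \zcref{lemma:homogeneousfromrainbowrow} to $M_1$ with its parameter $n$ set to $\ell$. The required dimensions $(2\ell\times(\ell^2-\ell))$ match exactly. \zcref{lemma:homogeneousfromrainbowrow} does not list the rainbow middle row among its hypotheses, but its proof plainly uses it, so I would invoke that lemma as proved and supply the rainbow property of $M_1$. This produces a homogeneous $\ell$-mesh $M_0\subseteq M_1\subseteq M$ that is flat witnessed by $\rho$ and whose tangle is a truncation of $\mathcal{T}_{M_1}$. Homogeneity is measured with respect to $\mathsf{compass}(M_0)$, computed via $\rho$ in $G$, so it does not depend on the intermediate mesh.

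The two remaining points are transitivity and running time. Truncation is set inclusion, so $\mathcal{T}_{M_0}\subseteq\mathcal{T}_{M_1}\subseteq\mathcal{T}_M$, and the tangle of $M_0$ is a truncation of that of $M$. The algorithm runs the two constructive procedures in sequence. The first takes $\mathbf{poly}(d)|E(G)|$ time, and the second takes $\mathbf{poly}(\ell)|E(G)|\le\mathbf{poly}(d)|E(G)|$ time, since $d\ge\ell$.

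I expect the only real obstacle to be bookkeeping: checking that the rainbow-middle-row hypothesis transfers to $M_1$ as a mesh with respect to the same rendition $\rho$, and that the orientation conventions (which paths count as horizontal and which as vertical, and which pair forms the middle row) agree between the two lemmas. If they differ, I would rotate $M_1$ by swapping the roles of its horizontal and vertical paths. Flatness and the tangle are both invariant under this relabelling, so the rotation costs nothing.
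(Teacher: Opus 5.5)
Your proposal is correct and is exactly the paper's argument: apply \zcref{lem:rainbowrowfinder} with $n=2\ell$ and $m=\ell^2-\ell$, apply \zcref{lemma:homogeneousfromrainbowrow} with parameter $\ell$ to the resulting rainbow-middle-row mesh, and conclude by transitivity of truncation. You are also right that the rainbow hypothesis is missing from the statement of \zcref{lemma:homogeneousfromrainbowrow} even though its proof uses it. The one thing to drop is your fallback of replacing $\ell\le 2$ by $3$: since $\mathsf{d}_{\ref{cor:almostmainthm}}$ grows with $\ell$, the hypothesis $d\ge\mathsf{d}_{\ref{cor:almostmainthm}}(\ell,q)$ does not cover it, and the paper simply ignores these degenerate cases, so your alternative of implicitly assuming $m\ge 2$ is the faithful reading.
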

\begin{proof}
    We first apply \zcref{lem:rainbowrowfinder} to our objects, with $n = 2\ell$ and $m = \ell^2 - \ell$.
    Since $d \geq \mathsf{d}_{\ref{cor:almostmainthm}}(\ell,q)$, this yields a $(2\ell \times \ell^2 - \ell)$-mesh $M' \subseteq M$, which is flat witnessed by $\rho$, has a rainbow middle row, and whose tangle is a truncation of the tangle of $M$.
    To this mesh, we then apply \zcref{lemma:homogeneousfromrainbowrow} to find a homogeneous $\ell$-mesh that is flat witnessed by $\rho$ and whose the tangle is a truncation of the tangle of $M'$.
    By transitivity, the tangle of $M_0$ is thus also a truncation of the tangle of $M$.
\end{proof}

To actually derive \zcref{thm:IntroHomoMuralis} from \zcref{cor:almostmainthm}, we still need to translate from a mesh to a wall.
Thus, we provide another small proof to bridge this gap.

\begin{proof}[Proof of \zcref{thm:IntroHomoMuralis}.]
    To start with, we choose the function $f \colon \mathbb{N} \to \mathbb{N}$ as
    \[ f(q,k) \coloneqq \mathsf{d}_{\ref{cor:almostmainthm}}(2k,q) . \]
    Let $n \coloneqq f(q_k)$.
    Recall that any $n$-wall is an $n$-mesh, and thus we can now directly apply \zcref{cor:almostmainthm}, with $\ell = 2k$, and this yields a homogeneous $\ell$-mesh $M \subseteq W_0$ that is flat witnessed by $\rho$ and whose tangle is a truncation of the tangle of $W_0$.

    Let $P_1, \ldots , P_\ell$ be the vertical paths of $M$, and $Q_1, \ldots , Q_\ell$ be the horizontal paths of $M$.
    For each odd $i \in [\ell]$, we let $P_i^* \subseteq P_i$ be the union of all $V(Q_j)$-$V(Q_{j+1})$-paths for odd $j \in [k-1]$, and for each even $i \in [\ell]$, let $P_i^* \subseteq P_i$ be the union of all $V(Q_j)$-$V(Q_{j+1})$-paths for even $j \in [k-1]$.
    Note that for all odd $i \in [\ell - 1]$, the graph $\bigcup_{j=1}^{k} Q_j \cup P_i^* \cup P_{i+1}^*$ has maximum degree 3 and contains a unique $V(Q_1)$-$V(Q_k)$-path $P_i^\dagger$.
    Using this, we observe that the graph
    \[ \bigcup_{j=1}^{k'} Q_j \cup \bigcup_{\substack{i \in [\ell-1] \\ i \text{ is odd}}} P_i^\dagger \]
    contains a $k$-wall $W$.
    In particular, $W \subseteq M$ and thus every cycle in $W$ is a cycle in $M$.
    Thus, all properties we desire are transferred directly to $W$ from $M$ and thus $W$ is homogeneous, flat witnessed by $\rho$, and its tangle is a truncation of the tangle of $W_0$.
\end{proof}

\paragraph{Computing our bounds.}
We give an explicit estimate for the order of the function in \zcref{thm:IntroHomoMuralis}.
To justify this, we now briefly go backwards through the steps that the above proof takes to give a justification for this bound.
First, we call $\mathsf{d}_{\ref{cor:almostmainthm}}(2k, q)$, which then unpacks to $\mathsf{d}_{\ref{lem:rainbowrowfinder}}(4k, 2k(2k-1), q)$.
If we further follow these definitions, this lands us in $\mathsf{d}_{\ref{lemma:Tiles}}(q, 2(2k(2k-1)), 2 , q(2k(2k-1)-2)+1 )$.
This leads us to the definition of the function $\mathsf{d}_{\ref{lemma:Tiles}}$ which lets us translate this expression to
\[ (q(\mathsf{r}(q,q(2k(2k-1)-2)+1)+1) + 2q +1 ) (8k(q+1)(2k-1) + 2) . \]
Let us also resolve the definition of $\mathsf{r}$ to translate $\mathsf{r}(q,q(6k(6k-1)-2)+1)+1)$ to get 
\[ \mathsf{r}(q,q(2k(2k-1)-2)+1) = (q(2k(2k-1)-2)+1)q(2k(2k-1)-2) + (q+1)(q(2k(2k-1)-2)+1) . \]
Since we now have a concrete expression, it is a simple matter of performing the necessary calculations (or letting a computer perform the necessary calculations) to see that for our chosen $f$ in the proof of \zcref{thm:IntroHomoMuralis} we have
\begin{align*}
   f(q,k)=  & 256 k^6 q^4 + 256 k^6 q^3 - 384 k^5 q^4 - 384 k^5 q^3 + 160 k^4 q^3 \ + \\
            & 128 k^4 q^2 + 160 k^3 q^4 - 128 k^3 q^2 - 16 k^2 q^4 - 48 k^2 q^3 + 64 k^2 q^2 \ + \\
            & 80 k^2 q + 16 k^2 - 16 k q^4 + 20 k q^3 - 16 k q^2 - 40 k q - 8 k + 4 q^3 - 6 q^2 + 8 q + 2 .
\end{align*}
Clearly, the term $256 k^6 q^4$ in this expression dominates the others, justifying the estimate we state in \zcref{thm:IntroHomoMuralis}.

\section{Conclusion}\label{sec:Conclusion}
As mentioned in the introduction, in recent years a program for establishing close-to-optimal bounds for the results originating from Robertson and Seymour's Graph Minor Series has emerged.
Several important milestones in this program have already been reached including polynomial bounds for the Grid Theorem \cite{ChuzhoyT2021Tighter}, the Flat Wall Theorem \cite{Chuzhoy2015Improved,KawarabayashiTW2018New,GorskySW2025Polynomial}, and the Graph Minor Structure Theorem (GMST) \cite{GorskySW2025Polynomial}.
However, several challenges remain.
One such challenge is the establishment of tighter bounds for the Unique Linkage Function, as mentioned in the introduction.
Towards this goal, our main theorem represents an important step, but to further close in on this target, additional and new arguments are required.

We want to mention that we believe the bounds we give for the function $f$ in \zcref{thm:IntroHomoMuralis} are likely not optimal.
In particular, the specific constructions we are choosing in \zcref{sec:Rainbow,sec:ProofThm1} to find the homogeneous wall in the output of \zcref{lemma:Tiles} can almost certainly be optimised.
It should be possible to find the homogeneous wall directly starting from \zcref{lemma:Tiles} without having to find a rainbow middle row.
This should reduce the exponent for $k$ in the estimate of $f$ in \zcref{thm:IntroHomoMuralis} somewhat.
If \zcref{sec:Rainbow} is any indication, we expect this to be quite arduous, technical work.
Accordingly we chose a construction which we considered to be comparatively easier to write down.
We expect that optimising the estimate for the exponent for $q$ is both harder and of more interest to the broader community, since in the known applications $q$ is often the much larger quantity.

Finally, we wish to direct attention at another problem deeply rooted in the Graph Minors Series, but maybe less visible outside a small group of experts.
Roughly speaking, the GMST explains that every $H$-minor-free graph has a tree-decomposition of small adhesion into pieces which are ``almost embeddable'' into some surface where $H$ does not embed.
The notion of almost embeddability has three ingredients apart from the Euler-genus of the surfaces involved.
Those ingredients are
\begin{itemize}
    \item apex vertices,
    \item vortices, and
    \item the depth of vortices.
\end{itemize}
In short, apex vertices are an extension of the set $A$ of vertices which needs to be deleted in order to find the flat wall in the Flat Wall Theorem.
Vortices are a slightly different matter, they represent imperfections of an embedding of a graph in some surface.
Roughly speaking, a vortex of an embedding is a disk $\Delta$ such that the graph on the interior of $\Delta$ is not required to be properly embedded.
To control the structure of vortices, the concept of \textsl{depth} provides a measure for how these non-embedded parts attach to the embedded outside of the vortex.
Both apices and vortices are known to be unavoidable parts of the GMST, which states that the number of apex vertices, the number of vortices, as well as their depth may all be bounded by a function only depending on $|H|$.

The \emph{apex number} of a graph $H$, denoted by $\mathsf{apex}(H)$, is the smallest integer $k$ such that there exists set $S\subseteq V(H)$ of size at most $k$ for which $H-k$ is planar.
Notice that $\mathsf{apex}(H) \leq |H|-4$.
Given an almost embedding, we say that an apex vertex $a$ is a \emph{major apex} if it has neighbours outside of the vortices of the almost embedding.
Otherwise $a$ is called a \emph{minor apex}.
It was well-known in the graph minors community for a long time that it is possible to prove a variant of the GMST where the number of major apices is bounded by $|H|-5$ which is optimal whenever $H$ is a complete graph on at least five vertices.
A proof of this theorem was first written down by Dvorak and Thomas \cite{DvorakT2016Listcoloring} and this strengthening of the GMST has since found a wide range of applications.
For instance, bounding the number of major apices played a key role in the recent proof of the \textsl{Clustered Hadwiger Conjecture} by Dujmovi{\'c}, Esperet, Morin, and Wood \cite{DujmovicEMW2023Proof}.
Moreover, the strengthening unlocked a structure theorem for excluding so called \emph{apex graphs}, i.\@e.\@ graphs $H$ with $\mathsf{apex}(H)\leq 1$ \cite{Grohe2003Local,KorhonenNPS2024Fully,HendreyW2025Polynomial}.
It is therefore of interest to prove a polynomial variant of this stronger GMST.

In order to establish such a polynomial Apex Graph Minor Structure Theorem (AGMST) one would need to deal with the apex vertices potentially arising in every step of the inductive proof of the GMST by separating them into those that are avoidable, i.\@e.\@ those which attach only in few areas, and those which are not avoidable, i.\@e.\@ those on whose neighbours one may root a large grid minor.
This separation step requires homogenisation similar to the one explained in detail in the introduction.
Indeed, \zcref{thm:IntroHomoMuralis} is already strong enough to deal with this task for the very base of the inductive proof of the GMST.
The induction step, however, requires a more sophisticated and specialised homogeneity lemma.
The way the GMST is proven is by first finding a flat wall and then starting to attach a sequences of ``flat strips'' to this initial flat area.
These strips are known as \emph{transactions} (see \cite{RobertsonS1990Graph,KawarabayashiTW2021Quickly,GorskySW2025Polynomial}).
In order to homogenise a flat transaction, the methods presented in this paper provide a good starting point, however, they do not suffice as both ends of the strip of a transaction are fixed and may not be pruned in the same way our proof deals with ``end tiles''.
Moreover, as one requires a full and consecutive strip for the proof, leaving ``gaps'' as those found in-between the horizontal and vertical strips of out technique is not feasible.
If such gaps are unavoidable, then one would require at the very least a strategy to deal with them in some way which is not given by our proof of \zcref{thm:IntroHomoMuralis}.
A strategy for homogenising transactions would not only allow to prove a polynomial variant of AGMST, but it would also allow for polynomial bounds on a recently established colorful variant of the GMST by Protopapas, Thilikos and Wiederrecht \cite{ProtopapasTW2025Colorful}.
Indeed, such a theorem might even be strong enough to prove polynomial bounds on the gap function in a recent variant of Liu's \cite{Liu2022Packing} proof for the half-integral Erdos-Posa property for minors by Paul, Protopapas, Thilikos, and Wiederrecht \cite{PaulPTW2024Obstructionsa}.

We therefore consider our main result, that is \zcref{thm:IntroHomoMuralis}, to be an important starting point towards proving a ``homogeneous flat transaction lemma'' which would be necessary to further the program towards an efficient graph minors theory.

\bibliographystyle{alphaurl}
\bibliography{literature}

\end{document}